\documentclass[a4paper, oneside, 10pt]{article} \usepackage[english]{babel}
\usepackage{latexsym,amsfonts,amsmath,enumerate,graphics,enumerate,amsthm}
\title{\textbf{A Note on Entropy of Delone Sets} } \author{T.~Hauser}
% -------------DEFINITIONS----------------------------------------------

\let\epsilon=\varepsilon
\makeatletter
\newtheorem*{rep@theorem}{\rep@title}
\newcommand{\newreptheorem}[2]{%
\newenvironment{rep#1}[1]{%
 \def\rep@title{#2 \ref{##1}}%
 \begin{rep@theorem}}%
 {\end{rep@theorem}}}
\makeatother

\newreptheorem{theorem}{Theorem}
\newreptheorem{corollary}{Corollary}
\newreptheorem{proposition}{Proposition}

\theoremstyle{definition}
\newtheorem{definition}{Definition}[section]

\newtheorem{theorem}[definition]{Theorem}
\newtheorem{proposition}[definition]{Proposition}
\newtheorem{lemma}[definition]{Lemma}
\newtheorem{corollary}[definition]{Corollary}

\newtheorem*{acknowledgement}{Acknowledgement}

\theoremstyle{remark}
\newtheorem{remark}[definition]{Remark}
\newtheorem{example}[definition]{Example}

\let\epsilon=\varepsilon
\let\phi=\varphi
\let\theta=\vartheta

%To make it doublespace
%\renewcommand{\baselinestretch}{1.62} 
%-----------------BEGINDOCUMENT--------------------------------------

\setlength{\textheight}{240mm} 
\setlength{\textwidth}{140mm}
\setlength{\topmargin}{-5mm}
\setlength{\oddsidemargin}{10mm}  
\setlength{\evensidemargin}{10mm}  

\setlength{\marginparsep}{8mm}
\setlength{\marginparwidth}{25mm}

\begin{document}
\maketitle

%------------------------------------\\
%\textbf{TO DO:}
%\begin{itemize}
%	\item INTRODUCTION Check
%	\item Check quotes from \cite{04} + Quellenangabe - axivenummer!
%	\item Keywords einbinden? 
%\end{itemize}
%
%------------------------------------

\begin{abstract}
	In this note we present that the patch counting entropy can be obtained as a limit and investigate which sequences of compact sets are suitable to define this quantity. We furthermore present a geometric definition of patch counting entropy for Delone sets of infinite local complexity and that the patch counting entropy of a Delone set equals the topological entropy of the corresponding Delone dynamical system. 
	We  present our results in the context of (non-compact) locally compact abelian groups that contain Meyer sets. 
\end{abstract}

\textbf{Mathematics Subject Classification (2000):} 37B40, 37A35, 52C23, 37B10.

\textbf{Keywords:} Patch counting entropy, 
configurational entropy, 
Delone set, 
%Complexity function, 
Entropy, 
Locally compact Abelian group, 
%Uniform space, 
Dynamical system, 
Ornstein-Weiss lemma.
%Van Hove sequence, 
%Bowen action. 

\section{Introduction}

	The study of structural properties of Delone sets is a key topic in the context of quasicrystals and aperiodic order. The concept of a Delone set can be seen as a mathematical abstraction for the positions of atoms in a solid state material, which in addition yields a physical motivation for this investigation. To define this concept we denote  $A+B:=\{a+b;\, a\in A, b\in B\}$ for $A,B \subseteq \mathbb{R}^d$ and define similarly $A-B$ and $A+g$ for $g\in \mathbb{R}^d$.
	A subset $\omega\subseteq \mathbb{R}^d$ is called \emph{uniformly discrete}, whenever there exists an open neighbourhood of $0$ such that $\{V+g;\, g\in \omega\}$ is a disjoint family and \emph{relatively dense}, whenever there is a compact set $K\subseteq \mathbb{R}^d$ such that $K+\omega=\mathbb{R}^d$. A subset $\omega\subseteq G$ is called \emph{Delone}, whenever it is relatively dense an uniformly discrete. 
	
An important tool in the study of Delone sets is the \emph{patch counting entropy} of a Delone set $\omega$. This notion was studied in \cite{baake2007pure} and can be found in \cite{lagarias1999geometric,lagarias2003repetitive} %Julien: Complexity and cohomology 
	under the name of \emph{configurational entropy}. 		
	For a Delone set $\omega$ and a compact set $A\subseteq \mathbb{R}^d$ we define the set of all \emph{$A$-patches} as $\operatorname{Pat}_\omega(A):=\{(\omega-g)\cap A;\, g\in \omega\}$, and $\omega$ is said to be of \emph{finite local complexity}, whenever $\operatorname{Pat}_\omega(A)$ is finite for all compact subsets $A\subseteq \mathbb{R}^d$. For Delone sets of finite local complexity we define the \emph{patch counting entropy} as
	\begin{align}\label{form:EPAT}
		\operatorname{E}_{pc}(\omega):=\limsup_{n \to \infty}\frac{\log(|\operatorname{Pat}_\omega(B_n)|)}{\theta(B_n)},	
	\end{align}
	where $B_n$ denotes the centred and closed Euclidean ball with radius $n$, $|\cdot|$ the cardinality and $\theta$ is the Lebesgue measure. 
	In \cite{lagarias2003repetitive} it is claimed that  in (\ref{form:EPAT}) one has always a limit and that 
''existence of the limit can be established by a subadditivity argument.'' Following this idea we will use a tool called the \emph{Ornstein-Weiss lemma} to obtain the existence of the limit. This lemma considers functions $f$ mapping compact subsets of $\mathbb{R}^d$ to positive real numbers such that for all compact sets $A,B\subseteq \mathbb{R}^d$ we have that $f(A\cup B)\leq f(A)+f(B)$ (sub-additivity); $f(A)\leq f(B)$ whenever $A\subseteq B$ (monotonicity); and $f(A+g)=f(A)$ for all $g\in G$ (invariance). The lemma then states that the limit $\lim_{n\to \infty}{f(B_n)}/{\theta(B_n)}$ exists.   
  	Unfortunately the function $A\mapsto |\operatorname{Pat}_\omega(A)|$ is not invariant. Consider for example the Delone set $\omega:=\mathbb{Z}\cup (\mathbb{Z}+\epsilon)$ in $\mathbb{R}$ for some $0<\epsilon<1/4$. Then for $A:=[\epsilon,2\epsilon]$ we have that $\operatorname{Pat}_\omega(A)=\{\{\epsilon\},\emptyset\}$, but $\operatorname{Pat}_\omega(A+\epsilon)=\{\emptyset\}$. Thus a direct application of the Ornstein-Weiss lemma is not possible. We will use the Ornstein-Weiss lemma in combination with the theory of topological entropy in order to obtain the existence of the limit. 
  	One therefore considers for each Delone set $\omega$ the closure $X_\omega$ of the set $\{\omega-g;\, g\in \mathbb{R}^d\}$ of all translates of $\omega$ with respect to a suitable topology on $\mathcal{A}(\mathbb{R}^d)$, the space of all closed subsets of $\mathbb{R}^d$. Then the shift $\mathbb{R}^d\times X_\omega \ni (g,\xi)\mapsto\xi+g$ introduces a dynamical system $\pi_\omega$, called the \emph{Delone dynamical system}. For details on this construction, see Subsection \ref{sec:Prelims} below. In \cite{baake2007pure} it is shown that the topological entropy of $\pi_\omega$ equals the patch counting entropy $\operatorname{E}_{pc}(\omega)$, whenever $\omega$ is of finite local complexity. A careful analysis of the underlying arguments in combination with the Ornstein-Weiss lemma indeed gives, that the limit in (\ref{form:EPAT}) always exists. We will present the arguments in Section \ref{sec:FLC} below. 
	
	%%%%%%%%%%%%%%%%%%%%%%%%%%%%%%%%%%%%%%%%%%%%%%%%%%%%%%
	In \cite{HuckandRichard} the question was raised, which type of sequences of compact sets can be considered in (\ref{form:EPAT}) instead of $(B_n)_{n\in \mathbb{N}}$ and Patch counting entropy is studied along Van Hove sequences. A \emph{Van Hove sequence} is a sequence $(A_n)_{n\in \mathbb{N}}$ of compact subsets such that for all compact $K\subseteq G$ we have that $\lim_{i\in I}\theta(\partial_K A_i)/\theta(A_i)=0$, where we define 
	$\partial_K A:=(K+\overline{A})\cap (K+\overline{A^c})$ for all compact $A\subseteq \mathbb{R}^d$.\footnote{Here $\overline{A}$ denotes the closure and $A^c$ the complement within $\mathbb{R}^d$.}
	Note that the Ornstein-Weiss lemma above holds for all Van Hove sequences $(A_n)_{n\in \mathbb{N}}$ and that the limit $\lim_{n\to \infty}{f(A_n)}/{\theta(A_n)}$ is independent of the choice of such a sequence for all monotone, invariant and sub-additive functions $f$. Nevertheless (\ref{form:EPAT}) is not independent of the choice of a Van Hove sequence.  
	Indeed, the following example shows that there are Delone sets such that one can obtain each value in $[0,\infty]$ in (\ref{form:EPAT}) for a suitable choice of a Van Hove sequence. 
\begin{example}\label{exa:introcompactlyconnectednecessary}
%Example: finally compactly connected to $0$ necessary\\
	Consider the finite local complexity Delone set $\omega:=(-\mathbb{N}_0)\cup \alpha\mathbb{N}_0\subseteq \mathbb{R}$ for $\alpha\in [0,1]$ irrational. Then for $\kappa\in [0,\infty]$ one obtains
	\begin{align}\label{form:fromexample}
		\limsup_{n \to \infty}\frac{\log(|\operatorname{Pat}_\omega(A_n)|)}{\theta(A_n)}=\kappa,
	\end{align}
	whenever we choose $A_n:=[0,n]+e^{\kappa n}$ if $\kappa$ is finite and $A_n:=[0,n]+e^{(n^2)}$ if $\kappa=\infty$. For details see Example \ref{exa:compactlyconnectednecessary}. 
\end{example}

Note that the effect considered in Example \ref{exa:introcompactlyconnectednecessary} can also be obtained from Van Hove nets with sub-exponential distance from $\{0\}$ and in fact for every sequence $(b_n)_{n\in \mathbb{N}}$ in $[0,\infty)$ with $b_n\rightarrow  \infty$ we can set $A_n:=[0,\log(b_n)/\kappa]+ b_n$ to obtain (\ref{form:fromexample}) for $\kappa\in (0,\infty)$. 
We will thus consider Van Hove nets that ''stay close to $0$'' and use the following notion inspired from \cite{OrnsteinandWeiss}.	
	Let $C\subseteq G$ be a compact subset. We say that $A\subseteq G$ is $C$-connected to $0$, if for all $a\in A$ there are $a_0,\cdots,a_n\in A\cup \{0\}$  with $a_0=0$, $a_n=a$ and $a_i-a_{i-1}\in C$ for every $i\in \{1,\cdots,n\}$. Furthermore we say that a sequence of compact sets $(A_n)_{n\in \mathbb{N}}$ is \emph{$C$-connected to $0$}, if $A_n$ is $C$-connected to $0$ for all $n\in \mathbb{N}$. A net is called \emph{compactly connected to $0$}, if it is $C$-connected to $0$ for some compact set $C\subseteq G$. Note that every sequence of compact path-wise connected sets that contains $0$, such as $(B_n)_{n\in \mathbb{N}}$, is compactly connected to $0$.   
	
	%In particular all path-wise connected sets in $\mathbb{R}^d$ that contain $0$ are $C$-connected to $0$ for any compact neighbourhood $C$ of $0$. Concrete examples of Van Hove sequences that are compactly connected to $0$ are sequences of closed balls containing $0$ of radius converging to $\infty$, such as $(B_n)_{n\in \mathbb{N}}$, and  sequences of closed cubes containing $0$ of side length converging to $\infty$. 
\begin{theorem}\label{the:EPATintro}
	Let $\omega$ be a Delone set of finite local complexity and $(A_n)_{n\in \mathbb{N}}$ a Van Hove sequence that is compactly connected to $0$. Then the following limit exists and 
		\begin{align*}
		\operatorname{E}_{pc}(\omega)=\lim_{n \to \infty}\frac{\log(\operatorname{Pat}_\omega(A_n))}{\theta(A_n)}.
	\end{align*}
\end{theorem}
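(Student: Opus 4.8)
The plan is to sandwich the exact count $|\operatorname{Pat}_\omega(A_n)|$ between two copies of a genuinely invariant quantity, to which the Ornstein--Weiss lemma does apply, and to show that compact connectedness to $0$ is precisely what keeps the resulting discrepancy sub-exponential. First I would work on the Delone dynamical system $\pi_\omega$ on the hull $X_\omega$ and, for each $\epsilon>0$, consider $s_\epsilon(A):=\log N_\epsilon(A)$, where $N_\epsilon(A)$ is the maximal cardinality of an $(A,\epsilon)$-separated subset of $X_\omega$; here $\xi,\xi'$ are separated over $A$ if for some $g\in A$ the translates $\xi-g$ and $\xi'-g$ are more than $\epsilon$ apart in the local metric on $X_\omega$. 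Since separation is defined through the translation action on all of $X_\omega$, the function $s_\epsilon$ is translation invariant, monotone and, up to replacing $\epsilon$ by $2\epsilon$, sub-additive, so the Ornstein--Weiss lemma gives that $\lim_{n}s_\epsilon(A_n)/\theta(A_n)=:h(\epsilon)$ exists and is independent of the Van Hove sequence, with $\lim_{\epsilon\to 0}h(\epsilon)=h_{\mathrm{top}}(\pi_\omega)=\operatorname{E}_{pc}(\omega)$, the last equality being the theorem of \cite{baake2007pure} recalled in the introduction. The statement thus reduces to comparing $\log|\operatorname{Pat}_\omega(A_n)|$ with $s_\epsilon(A_n)$ up to an error that becomes negligible as $\epsilon\to 0$.

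For the lower bound I would realise a maximal $(A_n,\epsilon)$-separated family by honest translates $\omega-h$ (finite local complexity lets one pass from hull elements to translates agreeing on a slightly enlarged window) and then anchor each $h$ to a point $g\in\omega$ within the covering radius $R$ of $\omega$, which exists by relative denseness. Passing from $h$ to $g$ moves the patch by at most $R$ and changes it only inside the boundary layer $\partial_{B_R}A_n$; as the number of configurations in that layer is controlled by finite local complexity and the Van Hove property forces $\theta(\partial_{B_R}A_n)=o(\theta(A_n))$, this anchoring is injective up to a factor $e^{o(\theta(A_n))}$. Hence $|\operatorname{Pat}_\omega(A_n)|\ge N_\epsilon(A_n)\,e^{-o(\theta(A_n))}$ and $\liminf_{n}\log|\operatorname{Pat}_\omega(A_n)|/\theta(A_n)\ge h(\epsilon)$ for every $\epsilon$; this direction does not yet use connectedness.

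The upper bound is where the hypothesis enters and is the main obstacle. Fixing $\epsilon$ below the finite local complexity threshold, two patches that agree up to an $\epsilon$-perturbation and carry a point at a common location must coincide near that location. Given $g\in\omega$, the anchor $0\in\omega-g$ together with a $C$-chain $0=a_0,a_1,\dots,a_k=a$ inside $A_n\cup\{0\}$ propagates the exact position of $(\omega-g)$ near $a$, step by step through the overlapping patches on the sets $a_{i-1}+C$, from the rigidly placed anchor. Consequently, if $\omega-g$ and $\omega-g'$ induce distinct exact patches on $A_n$, the discrepancy cannot be absorbed into an $\epsilon$-translation along any chain, so $\omega-g$ and $\omega-g'$ are $(A_n^+,\epsilon)$-separated, where $A_n^+:=A_n+F$ for a fixed compact $F$ containing $C$ and absorbing the chain overshoot. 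This yields $|\operatorname{Pat}_\omega(A_n)|\le N_\epsilon(A_n^+)$, and sub-additivity together with $\theta(A_n^+)/\theta(A_n)\to 1$ gives $\limsup_{n}\log|\operatorname{Pat}_\omega(A_n)|/\theta(A_n)\le h(\epsilon)$. Letting $\epsilon\to 0$ in both bounds produces the asserted limit $\operatorname{E}_{pc}(\omega)$. I expect the technical heart to be making the propagation-of-position argument precise: one must quantify how the fixed chain length and the finite local complexity radius interact with $\epsilon$ and verify that the enlargement $F$ can be chosen uniformly in $n$, which is exactly what $C$-connectedness guarantees and what fails for the drifting windows of Example \ref{exa:introcompactlyconnectednecessary}.
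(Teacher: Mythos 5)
Your proposal is correct and follows essentially the same route as the paper: you transfer the count to the hull, apply the Ornstein--Weiss lemma to the invariant separation/covering numbers of $\pi_\omega$, prove the lower bound by anchoring separated hull elements to points of $\omega$ with a boundary/covering-radius correction (the content of Lemma \ref{lem:centeredornotApatches} and Proposition \ref{pro:EpatleqEPAT}), and prove the upper bound by the chain-propagation argument in which uniform discreteness pins the $\epsilon$-translation along a $C$-chain from the anchor $0$ (exactly Lemma \ref{lem:PatleqSEP}). The only cosmetic differences are that you work with metric $\epsilon$-separation rather than uniformities and accept a sub-exponential rather than constant multiplicity in the anchoring step, neither of which changes the argument.
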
	 

	Note that the definition of topological entropy is independent of the choice of a larger class of sequences, so called F\o lner  sequences. Thus a natural next question is, whether the previous theorem holds true, whenever we consider F\o lner sequences and {ergodic} sequences, which are compactly connected to $0$. %\footnote{See \cite{Tempelman} for the notions.}
	These notions are studied in \cite{Tempelman,Pier} and defined as follows. Consider a sequence of compact sets $(A_n)_{n\in \mathbb{N}}$. We call $(A_n)_{n\in \mathbb{N}}$  \emph{ergodic}, whenever for all $g\in G$ we have that $\lim_{n\to \infty}\theta((A_n+g)\Delta A_n)/\theta(A_n)=0$ and \emph{F\o lner}, whenever for all $K\subseteq G$ compact we have $\lim_{n \to \infty}\theta((A_n+K)\Delta A_n)/\theta(A_n)=0$. Here $\Delta$ denotes the symmetric difference.\footnote{The \emph{symmetric difference} of sets $A,B$ is defined as $A\Delta B:=(A\setminus B)\cup (B\setminus A)$.} 
	Note that every Van Hove sequence is F\o lner and that every F\o lner sequence is ergodic. Furthermore note that these notions are equivalent in the context of discrete groups.  Nevertheless these notions are not equivalent in $\mathbb{R}^d$ and the next examples show that one can not consider F\o lner sequences in Theorem \ref{the:EPATintro}.
	
	\begin{example}\label{exa:introFolnerrightergodic}
		Consider the Delone set of finite local complexity
			\[\omega:=\{n\in \mathbb{N};\, \xi_n=1\}\cup (\mathbb{Z}+1/2),\] where $(\xi_n)_{n\in \mathbb{N}}$ is a sequence containing all finite words in $\{0,1\}$, i.e.\ for all finite sequences $(x_j)_{j=1}^n$ there exists $i\in \mathbb{N}$ such that $\xi_{i+j}=x_j$ for $j=1,\cdots,n$. 		
	Then $\operatorname{E}_{pc}(\omega)=\log(2)$ and for all $\kappa\in[0,\operatorname{E}_{pc}(\omega)]$ there is a F\o lner sequence $(A_n)_{n\in \mathbb{N}}$, which is compactly connected to $0$, such that
		\begin{align}\label{form:EPATforFolner}
		\limsup_{n \to \infty}\frac{\log(\operatorname{Pat}_\omega(A_n))}{\theta(A_n)}=\kappa.
	\end{align}
	Furthermore for all $\kappa\in[0,\infty]$ there is an ergodic sequence $(A_n)_{n\in \mathbb{N}}$, which is compactly connected to $0$, such that 
		\begin{align*}% \label{form:EPATforergodic}
		\limsup_{n \to \infty}\frac{\log(\operatorname{Pat}_\omega(A_n))}{\theta(A_n)}=\kappa.
	\end{align*}
See Example \ref{exa:Folnerfails} and Example \ref{exa:ergodicfails} in Section \ref{sec:examples} for details.  
	\end{example}  
	
	In fact the boundedness of the limit superior by $\operatorname{E}_{pc}(\omega)$ in (\ref{form:EPATforFolner}) above is a general phenomenon. 
	
	\begin{theorem}\label{the:introFolnerleqEPat}
	If $(A_n)_{n\in \mathbb{N}}$ is a F\o lner sequence that is compactly connected to $0$, then 
	 \begin{align*}
		\limsup_{n \to \infty}\frac{\log(\operatorname{Pat}_\omega(A_n))}{\theta(A_n)}\leq \operatorname{E}_{pc}(\omega).
	\end{align*}
	\end{theorem}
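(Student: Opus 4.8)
The plan is to fix $\epsilon>0$ and to establish $\limsup_{n}\log(|\operatorname{Pat}_\omega(A_n)|)/\theta(A_n)\le \operatorname{E}_{pc}(\omega)+\epsilon$; sending $\epsilon\to 0$ then yields the assertion. Throughout I write $\omega_g:=\omega-g$ and fix a covering radius $\rho$ of $\omega$ (so that every ball of radius $\rho$ meets each translate $\omega_g$), together with a compact $C$ witnessing that $(A_n)_{n}$ is $C$-connected to $0$. For a large side length $\ell$, which I will send to infinity only at the very end, I would cover $\mathbb{R}^d$ by the axis-parallel cubes $Q_t:=t+[-\ell/2-2\rho,\ell/2+2\rho]^d$ with $t\in\ell\mathbb{Z}^d$, so that consecutive cubes overlap in slabs of width $4\rho$, and set $\mathcal I_n:=\{t\in\ell\mathbb{Z}^d:\,Q_t\cap A_n\neq\emptyset\}$ and $N_n:=|\mathcal I_n|$. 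The first, and easiest, ingredient is that the F\o lner property controls $N_n$: since every $t\in\mathcal I_n$ lies in $A_n+B$ for a fixed compact $B$ depending only on $\ell$ and $\rho$, counting lattice points gives $N_n\,\ell^d\le \theta(A_n+B')$ for a fixed compact $B'$, whence $\limsup_n N_n\ell^d/\theta(A_n)\le 1$.

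The second step encodes a patch cube by cube. Given $g\in\omega$ and $t\in\mathcal I_n$, I would pick the point $q_t\in\omega_g\cap Q_t$ nearest to $t$ (which exists because $\ell/2+2\rho\ge\rho$) and record the pair $(u_t,q_t)$, where $u_t:=(\omega-(g+q_t))\cap Q'$ with $Q':=[-\ell/2-3\rho,\ell/2+3\rho]^d$. Since $g+q_t\in\omega$, the configuration $u_t$ is one of the $|\operatorname{Pat}_\omega(Q')|$ possible patches, and as $q_t-t\in B_\rho$ one has $Q_t-q_t\subseteq Q'$ and therefore recovers $\omega_g\cap Q_t=(u_t\cap(Q_t-q_t))+q_t$ from $(u_t,q_t)$. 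Because the cubes cover $A_n$, the whole patch $\omega_g\cap A_n$ is then determined by $A_n$ together with the family $(u_t,q_t)_{t\in\mathcal I_n}$.

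The decisive third step is to remove the a priori unbounded offsets $q_t$, and here compact connectedness enters; I expect this to be the main obstacle. Let $t_0$ denote the cube containing $0$. Each overlap of two adjacent cubes has width $4\rho$, hence contains a point of $\omega_g$, and this point appears in the recorded patterns of both cubes, so that knowing $u_t$, $u_s$ and the exact offset $q_t$ of one cube uniquely pins down the offset $q_s$ of an adjacent cube. Since $(A_n)$ is $C$-connected to $0$, every $t\in\mathcal I_n$ is linked to $t_0$ by a chain of cubes that meet $A_n$ and in which consecutive members overlap, so all offsets are determined by the single offset $q_{t_0}$. As $q_{t_0}\in(\omega-\omega)\cap Q_{t_0}$, a finite set $F$ whose cardinality does not depend on $n$, I obtain $|\operatorname{Pat}_\omega(A_n)|\le |F|\cdot|\operatorname{Pat}_\omega(Q')|^{N_n}$.

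Combining the three steps gives, for every fixed $\ell$, the bound $\limsup_n \log(|\operatorname{Pat}_\omega(A_n)|)/\theta(A_n)\le \log(|\operatorname{Pat}_\omega(Q')|)/\ell^d$, the constant $|F|$ contributing a term tending to $0$. As $\ell\to\infty$ the cubes $Q'$ form a Van Hove sequence that is compactly connected to $0$, so Theorem \ref{the:EPATintro} yields $\log(|\operatorname{Pat}_\omega(Q')|)/\theta(Q')\to \operatorname{E}_{pc}(\omega)$, and since $\theta(Q')/\ell^d\to 1$ the right-hand side converges to $\operatorname{E}_{pc}(\omega)$; letting $\ell\to\infty$ then finishes the argument. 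The delicate points I foresee are making the offset propagation in the third step precise, namely ensuring that each overlap genuinely contains a shared point of $\omega_g$ that is \emph{unambiguously} matched across the two adjacent patterns (this is where uniform discreteness and the slab width $4\rho$ are used), and checking that the chains furnished by $C$-connectedness stay inside cubes meeting $A_n$ once $\ell$ exceeds the diameter of $C$. Note that both hypotheses are essential in exactly the expected places: the F\o lner property in the first step and compact connectedness in the third, matching the failures exhibited in Example \ref{exa:introFolnerrightergodic}.
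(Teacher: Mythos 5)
Your architecture is genuinely different from the paper's: the paper proves this statement (as Corollary \ref{cor:FolnerleqEPat}) in two lines by replacing $A_n$ with $A_n+K$ for a compact neighbourhood $K$ of $0$, which by Proposition \ref{pro:VanHovenets+K} is a Van Hove sequence, still compactly connected to $0$, with $\theta(A_n+K)/\theta(A_n)\to 1$; the monotonicity $|\operatorname{Pat}_\omega(A_n)|\leq |\operatorname{Pat}_\omega(A_n+K)|$ and Theorem \ref{the:EPATintro} then finish the argument. Your direct tiling argument is sound in its first, second and fourth steps, and it correctly locates where the F\o lner property and the compact connectedness enter. But the third step has a genuine gap, precisely at the point you yourself flag as delicate.

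The assertion that knowing $u_t$, $u_s$ and the offset $q_t$ of a cube ``uniquely pins down'' the offset $q_s$ of an adjacent cube is not justified and is false in general. From the parent's data you recover $\omega_g\cap Q_t$, hence the trace $W:=\omega_g\cap Q_t\cap Q_s$ on the overlap; about the child you know only that $\omega_g$ agrees with $u_s+q_s$ on $Q_s$ for \emph{some} $q_s$ within distance $\rho$ of $s$ satisfying $(u_s+q_s)\cap Q_t\cap Q_s=W$ and the nearest-point normalisation. If $u_s$ admits a local period $\delta\neq 0$ on the overlap window that does not persist across all of $Q_s$, then $q_s$ and $q_s+\delta$ are both consistent with the recorded data yet yield different patches on $Q_s$; two points $g,g'\in\omega$ realising the two possibilities would carry identical codes but distinct $A_n$-patches, and the bound $|\operatorname{Pat}_\omega(A_n)|\leq |F|\cdot|\operatorname{Pat}_\omega(Q')|^{N_n}$ collapses. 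Uniform discreteness does not repair this: it separates distinct points of $\omega_g$ at the packing scale $V$, whereas the ambiguity $\delta$ is a nonzero element of $\omega-\omega$ in a ball whose radius is comparable to the \emph{covering} radius $\rho$, and such elements are plentiful. This is exactly the issue the paper's Lemma \ref{lem:PatleqSEP} is engineered to avoid: there the matching ambiguity between adjacent regions is an element $u-v$ with $u,v\in V$, so $V$-discreteness of $\omega-y$ forces $u=v$; achieving smallness of the ambiguity at the discreteness scale is why that argument runs through the entourages $\eta_\omega(K,V)$ rather than through exact patches re-centred at covering-radius distance. The evident repairs are not free, either: allowing a bounded list of candidate offsets per cube multiplies the count by roughly $|(\omega-\omega)\cap B_{r}|$ with $r$ of order $\ell$, whose logarithm can itself be of order $\operatorname{E}_{pc}(\omega)\,\theta(B_r)\sim \ell^d$, while enlarging the recorded window so that a parent determines its children's offsets inflates $\theta(Q')$ by a constant factor greater than $1$; in both cases the final bound degrades to a strict multiple of $\operatorname{E}_{pc}(\omega)$.
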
%{cor:FolnerleqEPat}

%%%%%%%%%%%%%%%%%%%%%%%% What about ILC sets?\\
	In \cite{frank2012fusion,frank2015tilings,fuhrmann2018irregular} patch counting entropy for arbitrary Delone sets is considered. Note that (\ref{form:EPAT}) always gives the value $\infty$ for Delone sets, which are not of finite local complexity. We refer to such Delone sets as sets of \emph{infinite local complexity}. The idea stated in \cite{frank2012fusion,frank2015tilings} and also used in \cite{fuhrmann2018irregular} is to take the topological entropy of the corresponding Delone dynamical system as the definition of patch counting entropy for all Delone sets of infinite local complexity. The problem with this approach is that one has to compute and understand the corresponding  Delone dynamical system if one wishes to compute the patch counting entropy. We will thus not follow this road completely and instead define the following notion of ''patch counting for Delone sets if infinite local complexity''. 	
	For subsets $\xi,\zeta,A,V\subseteq \mathbb{R}^d$, with $A$ compact and $V$ an open neighbourhood of the identity element, we write
	\[\xi\overset{A,V}{\approx}\zeta,\] whenever $\xi$ and $\zeta$ agree in $A$ up to an error of $V$, i.e.\ if $\xi\cap A\subseteq \zeta+V$ and $\zeta\cap A\subseteq \xi+V$.   
	For a Delone set $\omega\subseteq \mathbb{R}^d$ we say that $F\subseteq \omega$ is an \emph{$A$-patch representation} of \emph{scale} $V$ for $\omega$, if for any $g\in \omega$ there is $f\in F$ s.t. \[\omega-f \overset{A,V}{\approx} \omega-g.\]
	%\end{definition}
In Remark \ref{rem:repandspafinite} we will present that there is always a finite $A$-patch representation of scale $V$ for $\omega$. We define $\operatorname{pat}_\omega(A,V)$ 
	as the minimal cardinality of an $A$-patch representation of scale $V$ for $\omega$. We will present in Section \ref{sec:PatchcountingGeneral} that this approach yields the patch counting entropy as defined in (\ref{form:EPAT}). 
	
	\begin{theorem}\label{the:introEpatandEPAT}
		Let $\omega$ be a Delone set of finite local complexity. Then 
\begin{align*}
		\operatorname{E}_{pc}(\omega)=\sup_{V\in \mathcal{N}(G)}\limsup_{n\to \infty} \frac{\log(\operatorname{pat}_\omega (B_n,V))}{\theta(B_n)},	
	\end{align*}
	\end{theorem}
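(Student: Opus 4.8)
The plan is to prove the two inequalities separately. The upper bound is elementary, while the lower bound is the substantial part and, I expect, is cleanest when routed through the topological entropy of the Delone dynamical system $\pi_\omega$.

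For the upper bound I would observe that for every $V\in\mathcal{N}(G)$ one has $\operatorname{pat}_\omega(B_n,V)\le|\operatorname{Pat}_\omega(B_n)|$. Indeed, picking for each of the finitely many exact patches in $\operatorname{Pat}_\omega(B_n)$ a single $g\in\omega$ realising it produces a set $F\subseteq\omega$ which is a $B_n$-patch representation of \emph{every} scale $V$, since exact agreement $\omega-f=\omega-g$ on $B_n$ trivially gives $\omega-f\overset{B_n,V}{\approx}\omega-g$ (the existence of \emph{some} finite representation is Remark~\ref{rem:repandspafinite}). Dividing by $\theta(B_n)$, passing to $\limsup_n$, and then taking the supremum over $V$ yields $\sup_V\limsup_n\log(\operatorname{pat}_\omega(B_n,V))/\theta(B_n)\le\operatorname{E}_{pc}(\omega)$.

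For the lower bound the point is to control the collapse caused by replacing exact equality with $\overset{B_n,V}{\approx}$. Let $M_n(V)$ denote the maximal number of patches in $\operatorname{Pat}_\omega(B_n)$ that are $V$-close to one fixed patch. Since a single representative can only cover patches $V$-close to it, any representation of scale $V$ has cardinality at least $|\operatorname{Pat}_\omega(B_n)|/M_n(V)$, so $\operatorname{pat}_\omega(B_n,V)\ge|\operatorname{Pat}_\omega(B_n)|/M_n(V)$. Using subadditivity of $\limsup$ this gives
\[ \sup_{V}\limsup_n\frac{\log(\operatorname{pat}_\omega(B_n,V))}{\theta(B_n)}\;\ge\;\operatorname{E}_{pc}(\omega)-\inf_{V}\limsup_n\frac{\log(M_n(V))}{\theta(B_n)}, \]
so that the lower bound reduces to showing that the last infimum vanishes. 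To estimate $M_n(V)$ I would combine uniform discreteness with finite local complexity: for $V=B_\delta$ with $2\delta$ below the minimal gap of $\omega$, an approximation $\omega-g_1\overset{B_n,V}{\approx}\omega-g_2$ matches the points of $\omega-g_1$ on the eroded ball $B_{n-\delta}$ bijectively to points of $\omega-g_2$ lying within $\delta$. When the difference set $\omega-\omega$ is uniformly discrete, taking $\delta$ below its gap forces these shifts to vanish, whence $M_n(V)=1$ and in fact $\operatorname{pat}_\omega(B_n,V)=|\operatorname{Pat}_\omega(B_n)|$, settling the theorem in this case.

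The hard part is that for a general finite local complexity set $\omega-\omega$ is only locally finite and its gaps may shrink at infinity, so no fixed scale $V$ rigidifies the matching at all radii; then $\overset{B_n,V}{\approx}$ genuinely identifies patches differing by small, spatially distributed shifts, and one must prove $\limsup_n\log(M_n(V))/\theta(B_n)\to 0$ as $V\downarrow\{0\}$. This is where I would pass from the geometric to the dynamical picture: the quantity $\sup_V\limsup_n\log(\operatorname{pat}_\omega(B_n,V))/\theta(B_n)$ is exactly the exponential growth rate of the $(B_n,V)$-covering numbers of the dense discrete orbit $\{\omega-g;\,g\in\omega\}$ inside $X_\omega$, hence coincides with the topological entropy of $\pi_\omega$; together with the identity that this entropy equals $\operatorname{E}_{pc}(\omega)$ for finite local complexity sets (\cite{baake2007pure} and Section~\ref{sec:FLC}) the lower bound follows and the collapse estimate becomes automatic. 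I expect the comparison of covering numbers on the transversal with those on all of $X_\omega$ — equivalently, the subexponential control of $M_n(V)$ — to be the technical heart, and I would carry it out within the general framework developed in Section~\ref{sec:PatchcountingGeneral}.
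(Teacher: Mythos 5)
Your proposal is correct and, for the substantive (lower) inequality, ends up on exactly the paper's route: the theorem is obtained by combining $\sup_{V}\limsup_{n}\log(\operatorname{pat}_\omega(B_n,V))/\theta(B_n)=\operatorname{E}(\pi_\omega)$ (Theorem \ref{the:Etop=Epat}, whose centred-versus-non-centred comparison is Lemma \ref{lem:centeredornotApatches}) with $\operatorname{E}(\pi_\omega)=\operatorname{E}_{pc}(\omega)$ for the compactly-connected-to-$0$ Van Hove sequence $(B_n)_{n\in\N}$ (Theorem \ref{the:EPat=Etop}), while your upper bound via $\operatorname{pat}_\omega(B_n,V)\le|\operatorname{Pat}_\omega(B_n)|$ is the same observation the paper uses for Proposition \ref{pro:EpatleqEPAT}. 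The intermediate $M_n(V)$ discussion is a harmless detour that you rightly abandon, since the required subexponential collapse estimate is essentially equivalent to the theorem itself and is only recovered a posteriori from the dynamical identities.
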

	
	The intriguing thing about the approach via $\operatorname{pat}_\omega$ is that this approach is independent of the choice of an arbitrary Van Hove sequence and the assumption of connectedness to $0$ is not necessary in this context. 
	
	\begin{theorem}\label{the:intropatapproachVanHoveindependence}
		Let $\omega$ be a Delone set. Then for every Van Hove sequence $(A_n)_{n\in \mathbb{N}}$ we have  
	\begin{align}\label{form:Epat}
		\operatorname{E}_{pc}(\omega)=\sup_{V\in \mathcal{N}(G)}\limsup_{n\to \infty} \frac{\log(\operatorname{pat}_\omega (A_n,V))}{\theta(A_n)},	
	\end{align}
	\end{theorem}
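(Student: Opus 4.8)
The plan is to read the right-hand side as a covering quantity for the Delone dynamical system $\pi_\omega$ and to use that $\operatorname{E}_{pc}(\omega)$ equals its topological entropy $h(\pi_\omega)$ (by definition in the infinite-local-complexity case, and by the identification recalled in the introduction otherwise), a number that does not depend on the chosen Van Hove sequence. Alongside $\operatorname{pat}_\omega$ I introduce the \emph{translation invariant} covering number $\operatorname{cov}_\omega(A,V)$, the least cardinality of a set $E\subseteq X_\omega$ such that for every $\xi\in X_\omega$ there is $\eta\in E$ with $\xi\overset{A,V}{\approx}\eta$. In contrast to $A\mapsto\log|\operatorname{Pat}_\omega(A)|$, the function $A\mapsto\log(\operatorname{cov}_\omega(A,V))$ is monotone, sub-additive \emph{and} invariant, the last because $X_\omega$ is translation invariant and $\overset{A,V}{\approx}$ transforms equivariantly under the shift. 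Hence the Ornstein--Weiss lemma applies: $\lim_n\log(\operatorname{cov}_\omega(A_n,V))/\theta(A_n)$ exists and is the same for every Van Hove sequence, and the supremum over $V\in\mathcal{N}(G)$ reproduces $h(\pi_\omega)=\operatorname{E}_{pc}(\omega)$ in its Bowen form. It therefore suffices to sandwich $\operatorname{pat}_\omega$ between two copies of $\operatorname{cov}_\omega$ of matching entropy.

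For the bound $\operatorname{pat}_\omega\le\operatorname{cov}_\omega$ I restrict a cover to the orbit points indexed by $\omega$. Given a set $E$ realising $\operatorname{cov}_\omega(A,V)$, for each $\eta\in E$ that is $\overset{A,V}{\approx}$ to some $\omega-g$ with $g\in\omega$ I fix one such $g_\eta$ and put $F:=\{g_\eta\}\subseteq\omega$. For arbitrary $g\in\omega$ pick $\eta\in E$ with $\omega-g\overset{A,V}{\approx}\eta$; since also $\omega-g_\eta\overset{A,V}{\approx}\eta$, the triangle inequality for $\overset{A,V}{\approx}$ (at the cost of passing to scale $2V$ and slightly shrinking the window, both routine) gives $\omega-g\overset{A,2V}{\approx}\omega-g_\eta$. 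Thus $F$ is an $A$-patch representation of scale $2V$, so $\operatorname{pat}_\omega(A,2V)\le\operatorname{cov}_\omega(A,V)$, and after dividing by $\theta(A_n)$ and taking suprema we obtain $\sup_{V}\limsup_n\log(\operatorname{pat}_\omega(A_n,V))/\theta(A_n)\le h(\pi_\omega)$ for every Van Hove sequence.

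For the reverse bound I must recover the full orbit closure from the coarse $\omega$-indexed net, and here relative density enters. Fix a compact $K$ with $K+\omega=G$, a finer neighbourhood $V'\in\mathcal{N}(G)$, and a finite net $T\subseteq K$ fine enough that translating by an element of a $T$-cell alters a patch by less than the scale $V$. Let $F$ be a minimal $(A+K)$-patch representation of scale $V'$. Every $\xi\in X_\omega$ is approximated by some $\omega-h$; writing $h=g+s$ with $g\in\omega$, $s\in K$, choosing $t\in T$ in the cell of $s$ and $f\in F$ with $\omega-f\overset{A+K,V'}{\approx}\omega-g$, one verifies $(\omega-f)-t\overset{A,V}{\approx}\xi$, the enlargement to the window $A+K$ being exactly what lets the shift by $-t\in-K$ still control the window $A$. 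This gives $\operatorname{cov}_\omega(A,V)\le|T|\cdot\operatorname{pat}_\omega(A+K,V')$, with $|T|$ depending only on $K$ and $V$.

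The main obstacle is this window enlargement $A_n\mapsto A_n+K$, which a priori passes to a shifted sequence and threatens circularity. I remove it \emph{without} assuming the sought independence. Since $\log(\operatorname{pat}_\omega(\cdot,V))$ is sub-additive in the window (up to a scale refinement absorbed by $\sup_V$), we have $\operatorname{pat}_\omega(A_n+K,V')\le\operatorname{pat}_\omega(A_n,V')\cdot\operatorname{pat}_\omega(E_n,V')$ with $E_n:=(A_n+K)\setminus A_n$; and uniform discreteness of $\omega$ gives the crude bound $\log(\operatorname{pat}_\omega(E_n,V'))=O(\theta(E_n))$, because the number of points of $\omega$ in $E_n$ is $O(\theta(E_n))$ and at scale $V'$ a patch is determined by the occupancy of $O(\theta(E_n))$ cells. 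As $(A_n)$ is Van Hove, $\theta(E_n)=o(\theta(A_n))$, so the correction is $\exp(o(\theta(A_n)))$ and drops out after division by $\theta(A_n)$. Combining the two estimates yields $\limsup_n\log(\operatorname{cov}_\omega(A_n,V))/\theta(A_n)\le\sup_{V'}\limsup_n\log(\operatorname{pat}_\omega(A_n,V'))/\theta(A_n)$, hence $h(\pi_\omega)\le\sup_V\limsup_n\log(\operatorname{pat}_\omega(A_n,V))/\theta(A_n)$. Together with the previous paragraph this gives equality for every Van Hove sequence, and since $\operatorname{pat}_\omega$ is evaluated only on points of $\omega$ and uses representatives from $\omega$, no finite local complexity of $\omega$ was used at any point.
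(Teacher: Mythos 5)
Your overall architecture is essentially the paper's: read the right-hand side as a covering quantity for the Delone dynamical system $\pi_\omega$, get Van Hove independence from the Ornstein--Weiss lemma on the dynamical side, and pass between centred and non-centred (orbit-closure) representations using relative density, a finite $V$-net $T$ of the compact set $K$ (your factor $|T|$ is the paper's constant $N$ in Lemma \ref{lem:centeredornotApatches}), with the window enlargement $A_n\mapsto A_n+K$ killed by the Van Hove property. One step is done by a genuinely different but valid device: you absorb the enlargement via sub-additivity in the window plus the crude occupancy bound $\log\operatorname{pat}_\omega((A_n+K)\setminus A_n,V')=O(\theta(\partial_{K'}A_n))=o(\theta(A_n))$, whereas the paper shrinks to an inner Van Hove net $(B_i)_{i\in I}$ with $B_i+K\subseteq A_i$ (Proposition \ref{pro:VanHovenets-K}); your route uses uniform discreteness of $\omega$, the paper's only measure-theoretic properties of the net, and both work.

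There is, however, one real flaw. The function $A\mapsto\log\operatorname{cov}_\omega(A,V)$, with $\operatorname{cov}_\omega$ defined as the least number of \emph{centres} $E\subseteq X_\omega$ such that every $\xi$ is $\overset{A,V}{\approx}$ to some member of $E$, is \emph{not} sub-additive for fixed $V$: the relation $\overset{A,V}{\approx}$ is not transitive, so merging an $A$-net and a $B$-net into an $(A\cup B)$-net forces a passage to scale $V+V$ (and a shrinking of the window by $V$). The Ornstein--Weiss lemma requires exact sub-additivity of a single fixed function, so it does not apply to $\operatorname{cov}_\omega(\cdot,V)$ as you define it, and the scale degradation cannot be ``absorbed by $\sup_V$'' \emph{inside} the lemma. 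The paper sidesteps this by feeding Ornstein--Weiss the covering number $\operatorname{cov}_{X_\omega}(\eta_A)$ by $\eta_A$-\emph{small} sets, for which $\eta_{A\cup B}=\eta_A\cap\eta_B$ yields exact sub-additivity, and only afterwards sandwiching it between spanning numbers at finer scales (Lemma \ref{lem:SpaSepCovStaticRelation}, Proposition \ref{pro:ENTROPY_SPA_SEP_COV_D}, Lemma \ref{lem:epsilonentourageandBowen}). Your proof is repaired by exactly that substitution --- replace $\operatorname{cov}_\omega(A,V)$ by $\operatorname{cov}_{X_\omega}\bigl(\epsilon_\omega(\{0\},V)_{(-A)}\bigr)$ and keep your two comparison bounds, whose scale adjustments genuinely are absorbed by the outer supremum over $V$ --- but as written the central invocation of Ornstein--Weiss is unjustified.
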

%	
%	Furthermore we will see that 
%	\begin{theorem}\label{the:EpatsimplifiedforFLC}
%	Let $\omega$ be a FLC Delone set in a non-compact LCA group that contains Meyer sets. Let $C\subseteq G$ be a compact subset. Then there exists an open neighbourhood $V$ of $0$ such that for every $C$-connected Van Hove net $(A_i)_{i\in I}$ with $0\in A_i$ for all $i\in I$ and all open neighbourhoods $W$ of $0$ that are contained in $V$ the following limit exists and satisfies
%\[\operatorname{E}(\pi_\omega)=\lim_{i\in I}\frac{\log(\operatorname{pat}_\omega(A_i,W))}{\theta(A_i)}.\]
%\end{theorem}
	
	Note that the Delone set $\omega$ considered in Example \ref{exa:introFolnerrightergodic} above is contained in $\frac{1}{2}\mathbb{Z}$ and thus for any open neighbourhood $V$ of $0$ that is contained in the open centred ball of radius $\frac{1}{4}$ we have that $\operatorname{pat}_\omega(A,V)=|\operatorname{Pat}_\omega(A)|$ for any compact set $A$. One thus obtains that there are ergodic sequences and F\o lner sequences $(A_n)_{n\in \mathbb{N}}$ such that the supremum in (\ref{form:Epat}) attains exactly the values in $[0,\infty]$ and $[0,\operatorname{E}_{pc}(\omega)]$ respectively.

	%%%%%%%%%%%%%%%%%%%%%%%%%%%%%%% What about (non-compact) LCA groups (that contain Meyer sets)\\
	We will follow investigations like \cite{HuckandRichard}, \cite{Schlottmann} or\cite{baake2004dynamical}, where Delone sets are studied in more general groups, such as (non-compact) locally compact abelian groups with certain countability assumptions. To avoid the unnecessary countability assumptions we use the more general notion of nets from now on and recommend to think of sequences if the reader is not familiar with this notion. Unfortunately we do not know of a reference of the Ornstein-Weiss lemma for general locally compact abelian groups. Nevertheless Delone sets are naturally studied in locally compact abelian groups that contain Meyer sets, i.e.\ Delone sets such that there is a finite subset $F\subseteq G$ such that $\omega-\omega\subseteq \omega+F$. We will not introduce the notion of a cut and project scheme \cite{Schlottmann},\cite{baake2004dynamical}, but note that the existence of a cut and project scheme with $G$ as a ''physical space'' is equivalent to the existence of a Meyer set in $G$ and ensures the Ornstein-Weiss lemma to hold. For reference see \cite{meyer1972algebraic,04}. We will see that all the results mentioned above hold true in this setting and in particular the topological entropy of a Delone set $\pi_\omega$ equals the patch counting entropy, which generalizes a result from \cite{baake2007pure}. %Note that there are (metrizable and $\sigma$-compact) locally compact abelian groups such as the additive group of the $p$-adic integers $\mathbb{Q}_p$, for which the sequence $B_n$ of closed centred balls is not compactly connected to $0$.  
	
	The article is organized as follows. We first give some preliminaries in Section \ref{sec:Prelims}. In Section \ref{sec:PatchcountingGeneral} we will investigate the approach of patch counting entropy via $\operatorname{pat}_\omega$ and the topological entropy of the Delone dynamical system. In Section \ref{sec:FLC} we will then restrict to Delone sets of finite local complexity and establish the results about $\operatorname{Pat}_\omega$. In Section \ref{sec:examples} we present the details on the mentioned examples.

\section{Preliminaries}\label{sec:Prelims}
%In this section we provide notion and background on topological groups, uniform spaces, topological dynamical systems, Delone sets, amenable groups, Van Hove nets, uniform lattices and Ornstein-Weiss groups. 

\subsection{Locally Compact Abelian groups}

%(OWG implies amenable implies  unimodular implies locally compact. 
Consider an abelian group $G$. We write $0$ for the neutral element in $G$ and use the additive notion. For subsets $A,B\subseteq G$ the \emph{Minkowski sum} is defined as 
$A+B:=\{a+b;\, (a,b)\in A \times B\}.$ Similarly one defines the notions $A+g$, $g+A$, $-A$ for $g\in G$ and $A\subseteq G$. We denote the \emph{complement} $A^c:=G\setminus A$ and call $A$ \emph{symmetric}, if $A=-A$. Note that the complement and the inverse commute, i.e. $-(A^c)=(-A)^c$. In order to omit  brackets, we will use the convention, that the inverse and the complement are stronger binding than the Minkowski sum, which is stronger binding than the remaining set theoretic operations. 

A \emph{locally compact abelian group} (\emph{LCA group}) is an abelian group $G$ equipped with a locally compact Hausdorff topology $\tau$, such that the the addition $+\colon G\times G \to G$ and the inverse $-(\cdot)\colon G\to G$ are continuous. An \emph{isomorphism of LCA groups} is a homeomorphism that is a group homomorphism as well. We write $\overline{A}$ for the {closure} of a subset $A\subseteq G$. Furthermore we denote $\mathcal{K}(G)$ for the set of all non-empty compact subsets of $G$; $\mathcal{A}(G)$ for the set of all closed subsets and $\mathcal{N}(G)$ for the set of all open neighbourhoods of $0$.

A \emph{Haar measure} on $G$ is a non zero regular Borel measure $\theta$ on $G$, which satisfies $\theta(A)=\theta(A+g)$  for all $g\in G$ and all Borel sets $A\subseteq G$. Note that on all LCA groups there exists a Haar measure \cite[Proposition 10.4]{Folland} and that $\theta(U)>0$ for all non empty open $U\subseteq G$. 
Furthermore for $A\subseteq G$ we have that $\theta(A)<\infty$ whenever $A$ is pre-compact. 
The Haar measure is unique up to scaling, i.e.\ if $\theta$ and $\nu$ are Haar measures on $G$, then there is $c>0$ such that $\theta(A)=c\nu(A)$ for all Borel sets $A\subseteq G$. %(see \cite[Theorem 10.14]{Folland}).
 If nothing else is mentioned, we denote a Haar measure of a topological group $G$ by $\theta$.
%If $G$ is discrete we equip $G$ with the counting measure $A\mapsto|A|$, which is a Haar measure. 
For further reference see \cite{Folland}.

\subsection{Uniformities}

%\subsubsection{Binary relations}
	Let $X$ be a set. 
	A \emph{binary relation on $X$} is a subset of $X\times X$. For binary relations $\eta$ and $\kappa$ on $X$ we denote the \emph{inverse}  
$\eta^{-1}:=\{(y,x);\, (x,y)\in \eta \}$, the \emph{composition} $\eta \kappa:=\{(x,y);\, \exists z\in X : (x,z)\in \eta \text{ and } (z,y)\in \kappa\}$ and $\eta[x]:=\{y\in X;\, (y,x)\in \eta\}$.
A binary relation is called \emph{symmetric}, if $\eta=\eta^{-1}$. 
%For $\eta\subseteq X\times X$ and $x\in  X$ we write %$[x]\eta:=\{y\in X;\, (x,y)\in \eta\}$ and $\eta[x]:=[x]\eta^{-1}$. 

%For $M\subseteq X$ we denote $[M]\eta:=\bigcup_{x\in M}[x]\eta$ and $\eta[M]:=[M]\eta^{-1}$. %We use the convention that these operations are stronger binding than set theoretic operations
%We write furthermore 
%\[[M]\eta:=\{x\in X;\, \exists m\in M: (m,x)\in \eta\}\] 
%and $\eta[M]:=[M]\eta^{-1}$ for $M\subseteq X$ and $\eta\subseteq X\times X$ and abbreviate $[x]\eta:=[\{x\}]\eta$ and $\eta[x]:=\eta[\{x\}]$ for $x\in X$. 

%\subsubsection{Uniform spaces}

	For a compact Hausdorff space $X$, we denote the \emph{diagonal} $\Delta_X:=\{(x,x);\, x\in X\}$ and call a neighbourhood of $\Delta_X$ in $X^2$ an \emph{entourage (of $X$)}. The set of all entourages of $X$ is referred to as the \emph{uniformity\footnote{
Note that one can define general ''uniform spaces'', but as we are only interested in compact Hausdorff spaces, this definition works for us. For details and the general definition we recommend \cite{Kelley}. Note that we obtain our definition to be a restriction of the general definition from \cite[Theorem 6.22]{Kelley} and \cite[Theorem 32.3]{Munkres}.	
	} of $X$} and usually denoted by $\mathbb{U}_X$. %In this context we refer to $(X,\mathbb{U}_X)$ as a \emph{compact Hausdorff uniform space}. 
	 %A \emph{uniformity} for a set $X$ is a non-empty family $\mathbb{U}_X$ of reflexive binary relations such that $\mathbb{U}_X$ is invariant under composition, inversion and intersection and such that for $\eta\in \mathbb{U}_X$ also $\kappa\in \mathbb{U}_X$, whenever $\eta\subseteq\kappa$. 
%\begin{itemize}
%	\item[(a)] each member of $\mathbb{U}_X$ contains the diagonal $\Delta_X$;
%	\item[(b)] if $\eta \in \mathbb{U}_X$, then $\eta^{-1}\in \mathbb{U}_X$;
%	\item[(c)] if $\eta \in \mathbb{U}_X$, then there is $\kappa \in \mathbb{U}_X$ such that $\kappa \kappa \subseteq \eta$;
%	\item[(d)] if $\eta$ and $\kappa$ are members of $\mathbb{U}_X$, then so is $\eta \cap \kappa$; and
%	\item[(e)] if $\eta \in \mathbb{U}_X$ and $\eta \subseteq \kappa \subseteq X\times X$, then $\kappa \in \mathbb{U}_X$.
%\end{itemize}
%The pair $(X,\mathbb{U}_X)$ is called a \emph{uniform space} and the members of $\mathbb{U}_X$ are called \emph{entourages}. 
To obtain some geometric intuition for $\eta\in \mathbb{U}_X$ we say that \emph{$x$ is $\eta$-close to $y$}, whenever $(x,y)\in \eta$. 
%This notion is symmetric if and only if $\eta$ is symmetric.
 We think of two elements to be ''very close'', whenever the pair is $\eta$-close for ''many'' entourages $\eta$.  
 %If $x$ is $\eta$-close to $x'$ and $x'$ is $\eta$-close to $x$, we say that \emph{$x$ and $x'$ are $\eta$-close}.
Note that if $x$ is $\eta$-close to $y$ and $y$ is $\kappa$-close to $z$, then $x$ is $\eta \kappa$-close to $z$.

%If $(X,\mathbb{U}_X)$ is a uniform space the corresponding \emph{uniform topology} $\mathcal{T}_X$ consists of all subsets $U\subseteq X$ such that for each $x\in U$ there exists $\eta\in \mathbb{U}_X$ with $\eta[x]\subseteq U$. Topological terminology in the context of uniform spaces refers to this topology. 
A subfamily $\mathbb{B}_X\subseteq \mathbb{U}_X$ is called a \emph{base for $\mathbb{U}_X$}, if every entourage contains a member of $\mathbb{B}_X$. An entourage $\eta \in \mathbb{U}_X$ is called \emph{open} (or \emph{closed}), whenever it is open (or closed) as a subset of $X\times X$.  Note that the family of all open and symmetric entourages of $X$ forms a base of the uniformity of $X$. %see \cite[Theorem 6.6]{Kelley}
	If $(X,d)$ is a metric space we denote $[d<\epsilon]:=\{(x,y)\in X \times X;\, d(x,y)<\epsilon \}$ for $\epsilon<0$.  
Then $\mathbb{B}_d:=\{[d<\epsilon];\, \epsilon>0\}$ is a base for the uniformity of the corresponding  topological space $X$. 
% $\mathbb{U}_X$ consisting of all binary relations that contain $[d<\epsilon]$ for some $\epsilon>0$. The corresponding topology is the topology of open sets with respect to $d$.  
Note that $x$ is $[d<\epsilon]$-close to $y$, if and only if $d(x,y)<\epsilon$.

\subsection{Dynamical systems}
%Flows

Let $G$ be an LCA group and $X$ be a compact Hausdorff space. A continuous map $\phi\colon G\times X \to X$ is called an \emph{action of $G$ on $X$} (also \emph{dynamical system} or \emph{flow}), whenever $\phi(0,\cdot)$ is the  identity on $X$ and for all $g,g'\in G$ we have that $\phi(g,\phi(g',\cdot))=\phi(g+g',\cdot)$. We write $\phi^g:=\phi(g,\cdot)\colon X\to X$ for all $g\in G$. %Thus $\phi^{0}$ is the identity on $X$ and $\phi^g\circ \phi^{g'}=\phi^{gg'}$ for all $g,g'\in G$. 

%If $\phi$ and $\psi$ are actions of a topological group $G$ on topological spaces $X$ and $Y$ respectively, we call a surjective continuous map $p\colon X\to Y$ a \emph{factor map}, if $p\circ \phi^g=\psi^g\circ p$ for all $g\in G$. We then refer to $\psi$ as a \emph{factor} of $\phi$ and write $\phi \overset{p}{\to}\psi$. If $p$ is in addition a homeomorphism, then $p$ is called a \emph{topological conjugacy} and we call $\phi$ and $\psi$ \emph{topologically conjugate}. 

\subsection{Delone and Meyer sets}

	Let $G$ be an LCA group and let $M\subseteq G$ be a subset. A subset $\omega\subseteq G$ is called \emph{$M$-dense} in $G$, if $M+\omega=G$. Also $M$ is said to be \emph{$M$-discrete} if $\{M+g;\, g\in \omega\}$ is a disjoint family. Furthermore $\omega$ is called \emph{relatively dense}, if $\omega$ is $K$-dense for some compact $K\subseteq G$; and \emph{uniformly discrete}, if it is $V$-discrete for some open neighbourhood $V$ of $0$. If $\omega$ is relatively dense and uniformly discrete, we call $\omega$ a \emph{Delone set}. 
Note that $G$ is assumed to be Hausdorff. Thus every uniformly-discrete subset of $G$ is discrete. 
A \emph{Meyer set} is a Delone set $\omega\subseteq G$ such that there is a finite set $F\subseteq G$ that satisfies $\omega-\omega\subseteq \omega+F$. Note that all discrete LCA groups are Meyer sets in themselves.
	Furthermore it is well known that all compactly generated LCA groups are isomorphic to $\mathbb{R}^d\times \mathbb{Z}^n\times C$ for some integers $d,n$ and some compact LCA group $C$, which contains the Meyer set $\mathbb{Z}^{(d+n)}\times \{0\}$. For reference see for example \cite[Theorem 4.2.2]{deitmar2014principles}.  
	
	For a more exotic example consider the metrizable and $\sigma$-compact LCA group $\mathbb{Q}_p$ of $p$-adic integers. For a reference on $\mathbb{Q}_p$ see \cite{gouvea1997p}. $\mathbb{Q}_p$ contains no discrete subgroup other then $\{0\}$. Denote by $\mathbb{Z}[p^{-1}]$ the smallest subring of $\mathbb{Q}_p$ that contains $\mathbb{Z}$ and $p^{-1}$. Note that $\mathbb{Z}[p^{-1}]$ is also a subring of $\mathbb{R}$ and we can define $\Lambda:=[0,1]\cap \mathbb{Z}[p^{-1}]$. In fact one can show using the ''cut and project scheme'' $(\mathbb{Q}_p,\mathbb{R},\{(z,z);\, z\in \mathbb{Z}[p^{-1}]\})$ that $\Lambda$ is a Meyer set in $\mathbb{Q}_p$. For details see \cite{meyer1972algebraic,cornulier2014metric,04}.
	
	Note furthermore that there are metrizable and seperable LCA groups that contain no Meyer sets as presented in \cite[Chapter II.11]{meyer1972algebraic}.

\subsection{Delone dynamical systems}
	
	Consider a Delone set $\omega\subseteq G$. For $A\subseteq G$ compact and $g\in \omega$, we call $(\omega -g)\cap A$ an \emph{$A$-patch} of $\omega\subseteq G$ and denote the set of all $A$-patches by $\operatorname{Pat}_{\omega}(A)$. A Delone set is said to have \emph{finite local complexity} (FLC), if $\operatorname{Pat}_\omega(A)$ is finite for every compact set $A\subseteq G$. Otherwise it is said to be of \emph{infinite local complexity} (ILC).
For $K\subseteq G$ compact, an open neighbourhood $V$ of $0$ and $\xi,\zeta\in \mathcal{A}(G)$ we denote 
	\[\xi \overset{K,V}{\approx} \zeta,\]
 whenever there is $\xi\cap K\subseteq \zeta + V$ and $\zeta\cap K\subseteq \xi + V$. 
	Considering the family of all subsets $O\subseteq \mathcal{U}_G$ such that for all $\xi\in \mathcal{A}(G)$ there are a compact  subset $K\subseteq G$ and an open neighbourhood $V$ of $0$ such that $\{\zeta\in \mathcal{A}(G);\, \xi \overset{K,V}{\approx} \zeta\}\subseteq O$ we obtain a compact Hausdorff topology on $\mathcal{A}(G)$, called the \emph{local rubber topology} \cite[Theorem 3]{baake2004dynamical}.  
 Defining 
	\[\epsilon(K,V):=\left\{(\xi,\zeta)\in \mathcal{A}(G)^2;\, \xi \overset{K,V}{\approx} \zeta \right\}.\]
	 for compact $K\subseteq G$ and open neighbourhoods $V$ of $0$ we obtain that 
	 \[\mathbb{B}_{lr}:=\{\epsilon(K,V);\, (K,V)\in \mathcal{K}(G)\times \mathcal{N}(G)\}\]
	is a base for the corresponding uniformity $\mathbb{U}_{\mathcal{A}(G)}$ on $\mathcal{A}(G)$. We call this base the \emph{local rubber base}. The uniformity is called the \emph{local rubber uniformity}. For a Delone set $\omega\subseteq G$ we denote 
	$D_\omega:=\{\omega+g;\, g\in G\}$
and $X_\omega$ for the closure of $D_\omega$ with respect to the local rubber topology. 
%It is shown in \cite{baake2004dynamical} that 
Then $X_\omega$ is a compact Hausdorff space and 
\[\mathbb{B}_{lr}(\omega):=\{\epsilon_\omega(K,V);\, (K,V)\in \mathcal{K}(G)\times \mathcal{N}(G)\}\]
is a base of the corresponding uniformity, where we denote $\epsilon_\omega(K,V):=\epsilon(K,V)\cap (X_\omega\times X_\omega)$ for the restricted entourages. We call $\mathbb{B}_{lr}(\omega)$ the \emph{(restricted) locally rubber base} and define the \emph{Delone  dynamical system} 
\[\pi_\omega\colon G\times X_\omega \to X_\omega\] 
by $\pi_\omega(g,\xi):=\xi+g$. The continuity of this action is shown in \cite{baake2004dynamical}. 
If $\omega\subseteq G$ is a FLC Delone set there is another base of $\mathbb{U}_{X_\omega}$ that allows more control over the considered sets. For $K\subseteq G$ compact and any open neighbourhood $V$ of $0$ let 
\[\eta_\omega(K,V):=\{(\xi,\zeta)\in X_\omega^2;\, \exists x,z\in V\colon (\xi+x)\cap K=(\zeta+z)\cap K\}.\]
If $\omega\subseteq G$ is a FLC Delone set, then
	\[\mathbb{B}_{\text{lm}}(\omega):=\{\eta_\omega(K,V);\, (K,V)\in \mathcal{K}(G)\times \mathcal{N}(G)\}\]
is a base of $\mathbb{U}_{X_\omega}$. This easily follows from \cite[Prop. 4.5]{baake2004dynamical}. We will refer to this base as the \emph{local matching base} of $\mathbb{U}_{X_\omega}$.

\subsection{Amenable groups and Van Hove nets}

Let $G$ be an LCA group. 
	For $K,A\subseteq G$ we define the \emph{symmetric difference} as $A\Delta K:=(A\setminus K)\cup (K\setminus A)$ and the \emph{$K$-boundary of $A$} as
$\partial_K A:=(K+\overline{A})\cap (K +\overline{A^c})$.
	%We use the convention, that the Minkowski product is stronger binding as the operation of taking the $K$-boundary and that the set theoretic operations, except from complementation, are weaker binding. From the definition we obtain that $K\mapsto \partial_K A$ is monotone. 
	Note that $K+ \overline{A}$ is the set of $g\in G$ such that $(-K+g)\cap \overline{A}$ is not empty. 
Thus $\partial_K A$ is the set of all elements $g\in G$ such that $-K+g$ intersects both $\overline{A}$ and $\overline{A^c}$. 

For details on nets and convergence, we refer to \cite{04,DunfordandSchwartz,Kelley}. A net $(A_i)_{i\in I}$ of compact subsets of $G$ is called \emph{ergodic}, whenever we have  $\lim_{i\in I}\theta((A_i+g)\Delta A_i)/\theta(A_i)=0$  for all $g\in G$ ; \emph{F\o lner}, whenever we have $\lim_{i\in I}\theta((A_i+K)\Delta A_i)/\theta(A_i)=0$ for all non empty and compact $K\subseteq G$; and \emph{Van Hove}, whenever we have 
$\lim_{i\in I}\theta(\partial_K A_i)/{\theta(A_i)}=0$ for all compact $K\subseteq G$. Note that we implicitly assume $\theta(A_i)>0$ for large $i\in I$ in these definitions. 

Note that our definition of Van Hove nets is equivalent to the definitions given in \cite{Schlottmann} and \cite{Tempelman} and that every LCA group contains a Van Hove net \cite{04}. Furthermore every Van Hove net is F\o lner and every F\o lner net  is ergodic. Furthermore in \cite[Appendix (3.K)]{Tempelman} it is shown that an ergodic net $(A_i)_{i\in I}$ is Van Hove, whenever there exists a compact neighbourhood $U$ of $0$ such that $\lim_{i\in I}\theta(\partial_U A_i)\theta(A_i)=0$. Thus these notions are equivalent in the context of discrete groups. As seen in the Introduction these notions are pairwise non-equivalent in $\mathbb{R}^d$. 
% \cite[Proposition 12.2]{Pier}.
%We furthermore call a net $(A_i)_{i\in I}$ of subsets of $G$  \emph{point absorbing}, whenever for every $g\in G$ there is some $j\in I$ such that for all $i\geq j$ there is $g\in A_i$. 

	\begin{proposition}\label{pro:VanHovenets+K}
		Let $(A_i)_{i\in I}$ be a F\o lner net in an LCA group $G$ and $M\subseteq G$ be a compact neighbourhood of $0$.
	Then $(M+A_i)_{i\in I}$ is a Van Hove net and satisfies $\lim_{i\in I}\theta(M+A_i)/\theta(A_i)=1$.
	\end{proposition}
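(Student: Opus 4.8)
The plan is to treat the two assertions separately, and to reduce the Van Hove property to a single, well chosen neighbourhood by means of the criterion recalled above from \cite[Appendix (3.K)]{Tempelman}. First I would record the density statement, since it drives everything else. As $M$ is a neighbourhood of $0$ we have $A_i\subseteq M+A_i$, so that $(M+A_i)\setminus A_i=(M+A_i)\Delta A_i$, and the F\o lner property applied to the compact set $M$ gives $\theta((M+A_i)\setminus A_i)/\theta(A_i)\to 0$; this yields $\theta(M+A_i)/\theta(A_i)\to 1$ at once. The same estimate, carried out for the compact set $K+M$ in place of $M$ and using $0\in K$ to rewrite the symmetric difference as an outer difference, shows $\theta((K+M+A_i)\setminus(M+A_i))\le \theta(((K+M)+A_i)\setminus A_i)=o(\theta(A_i))$ for every compact $K$; hence $(M+A_i)_{i\in I}$ is itself F\o lner and in particular ergodic.

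By Tempelman's criterion it then suffices to exhibit one compact symmetric neighbourhood $U$ of $0$ with $\theta(\partial_U(M+A_i))/\theta(M+A_i)\to 0$, and I would choose $U\subseteq \operatorname{int}(M)$ (here $\operatorname{int}$ denotes the interior), which is possible by local compactness. Writing $B_i:=M+A_i$, a compact and hence closed set (so $\overline{B_i}=B_i$), I would split
\[
 \partial_U B_i \;\subseteq\; \big[(U+B_i)\setminus B_i\big]\;\cup\;\big[B_i\cap (U+\overline{B_i^c})\big]
\]
into an outer and an inner part. The outer part is harmless: $(U+B_i)\setminus B_i\subseteq (U+M+A_i)\setminus A_i\subseteq ((U+M)+A_i)\Delta A_i$, so the F\o lner property for the compact set $U+M$ makes its measure $o(\theta(A_i))$.

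The inner part is the only genuine obstacle, and this is exactly where the thickening by $M$ enters: for a general F\o lner net the inner boundary of $A_i$ itself need \emph{not} be negligible, which is precisely the reason F\o lner nets need not be Van Hove, but after fattening by the neighbourhood $M$ the inner boundary becomes controllable. The key observation is that every point of $A_i$ lies ``$U$-deep'' in $B_i$. Indeed, since $U$ is symmetric, $g\in B_i\cap(U+\overline{B_i^c})$ forces $(g+U)\cap \overline{B_i^c}\neq\emptyset$, i.e.\ $g+U\not\subseteq \operatorname{int}(B_i)$; whereas for $a\in A_i$ we have $a+U\subseteq a+\operatorname{int}(M)\subseteq \operatorname{int}(M)+A_i\subseteq \operatorname{int}(B_i)$, the middle set being open and contained in $M+A_i=B_i$. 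Hence no $a\in A_i$ belongs to the inner part, so $B_i\cap(U+\overline{B_i^c})\subseteq (M+A_i)\setminus A_i$, whose measure is $o(\theta(A_i))$ by the density estimate of the first paragraph. Combining the two parts gives $\theta(\partial_U B_i)=o(\theta(A_i))=o(\theta(B_i))$, and Tempelman's criterion then delivers the Van Hove property, while $\theta(M+A_i)/\theta(A_i)\to 1$ was already established.
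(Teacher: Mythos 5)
Your proof is correct and follows essentially the same route as the paper: the density estimate from the F\o lner property, reduction of the Van Hove property to ergodicity plus a single compact neighbourhood $U$ via the criterion from Tempelman, and a bound on $\partial_U(M+A_i)$ by sets whose measure is controlled by the F\o lner property of $(A_i)_{i\in I}$ applied to thickened compacta. The only substantive difference is in the boundary estimate: the paper chooses $U$ symmetric with $U+U\subseteq M$ and shows $\partial_U(M+A_i)\subseteq (U+M+A_i)\setminus A_i$ directly by an element-chasing argument, whereas you choose $U\subseteq \operatorname{int}(M)$, split the boundary into an outer and an inner part, and dispose of the inner part by observing that every point of $A_i$ lies $U$-deep in $\operatorname{int}(M+A_i)$ --- both yield the same containment up to a union of two negligible sets. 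One small imprecision to repair: your one-line deduction that $(M+A_i)_{i\in I}$ is F\o lner only controls the outer difference $(K+B_i)\setminus B_i$, while ergodicity at $g$ (which is all the Tempelman criterion requires) also needs a bound on $B_i\setminus(B_i+g)$; this follows by applying the same outer estimate to $-g$ as well, which is exactly the two-sided bound the paper writes down.
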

	\begin{proof}
	From $A_i\cup ((M+A_i)\Delta A_i)=A_i\cup ((M+A_i)\setminus A_i)\cup (A_i\setminus(M+A_i))=A_i\cup ((M+A_i)\setminus A_i)\supseteq M+A_i$ for all $i\in I$ we obtain
	\[1\leq \frac{\theta(M+A_i)}{\theta(A_i)}\leq 1+\frac{\theta((M+A_i)\Delta A_i)}{\theta(A_i)}\rightarrow 1,\]
	which shows $\lim_{i\in I}{\theta(M+A_i)}/{\theta(A_i)}=1$.
	
	To show that $(M+A_i)_{i\in I}$ is ergodic let $g\in G$ and observe 
	$\theta((M+A_i+g)\Delta (M+A_i))\leq \theta(((M+g)+A_i)\setminus A_i)+\theta(((M-g)+A_i)\setminus A_i)\leq \theta(((M+g)+A_i)\Delta A_i)+\theta(((M-g)+A_i)\Delta A_i)$. As $(A_i)_{i\in I}$ is F\o lner we obtain $(M+A_i)_{i\in I}$ to be ergodic from $\lim_{i\in I}{\theta(M+A_i)}/{\theta(A_i)}=1$.		
	To show that $(M+A_i)_{i\in I}$ is Van Hove it is thus  sufficient to show that there exists a compact neighbourhood $U$ of $0$ such that $\lim_{i\in I}\theta(\partial_U (M+A_i))\theta(M+A_i)=0$. Choose a compact and symmetric neighbourhood $U$ of $0$ such that $U+U\subseteq M$. Then for $g\in \partial_U(M+A_i)$ we have $g\in U+M+A_i$ and $g\in U+\overline{(M+A_i)^c}$ and thus there is $u\in U=-U$ such that $g+u\in \overline{(M+A_i)^c}$. As $U$ is a neighbourhood of $0$ there is $v\in U$ such that $g+u+v\in (M+A_i)^c$ and thus $g\notin A_i+M-u-v\supseteq A_i$. This shows $g\in (U+M+A_i)\setminus A_i$ and we obtain 
			\begin{align*}
			\partial_U(M+A_i)
			\subseteq {(U+M+A_i)}\setminus A_i=(U+M+A_i)\Delta A_i.	 	
		\end{align*}
		Thus the statement follows from  $\lim_{i\in I}{\theta(M+A_i)}/{\theta(A_i)}=1$ and $(A_i)_{i\in I}$ being F\o lner. 
	\end{proof}

	\begin{proposition}\label{pro:VanHovenets-K}
		Let $(A_i)_{i\in I}$ be a Van Hove net in an LCA group $G$ and $M\subseteq G$ be a compact subset. Then $(M+A_i)_{i\in I}$ is a Van Hove net and satisfies $\lim_{i\in I}\theta(M+A_i)/\theta(A_i)=1$. Furthermore there exists a Van Hove net $(B_i)_{i\in I}$ with the same index set such that $M+ B_i\subseteq A_i$ for all $i\in I$ and such that $\lim_{i\in I}\theta(B_i)/\theta(A_i)=1$. 
	\end{proposition}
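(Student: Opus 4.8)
The plan is to reduce all three assertions to two boundary-inclusion estimates, after which everything follows by applying the Van Hove property of $(A_i)_{i\in I}$ to suitably enlarged compact sets. Throughout I would fix a compact symmetric neighbourhood $U$ of $0$ with $M\subseteq U$; such a $U$ exists because $M$ is compact (take $U:=M\cup(-M)\cup W$ for any compact symmetric neighbourhood $W$ of $0$). I assume $M\neq\emptyset$, the empty case being degenerate. The two elementary facts I would use repeatedly are the identities $\overline{M+A}=M+\overline{A}$ (compact plus closed is closed) and $\overline{B^c}=(\operatorname{int}B)^c$, together with the absorption estimate $U+\partial_K A\subseteq\partial_{U+K}A$, which is immediate from the definition of $\partial_\bullet$.

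For the first two claims I would first establish the key estimate $\partial_K(M+A)\subseteq M+\partial_K A$ for all compact $K,A$. Given $g\in\partial_K(M+A)$, write $g=k_1+m+a$ with $k_1\in K$, $m\in M$, $a\in\overline{A}$ coming from $g\in K+\overline{M+A}$; from $g\in K+\overline{(M+A)^c}=K+(\operatorname{int}(M+A))^c$ pick $k_2\in K$ with $g-k_2\notin\operatorname{int}(M+A)$. Since $\operatorname{int}(M+A)\supseteq m+\operatorname{int}A$, this forces $g-m-k_2\in\overline{A^c}$, while $g-m\in K+\overline{A}$, so $g-m\in\partial_K A$. Combined with the absorption estimate this gives $\partial_K(M+A_i)\subseteq\partial_{U+K}(A_i)$, of measure $o(\theta(A_i))$. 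For the volume ratio I would sandwich $m_0+A_i\subseteq M+A_i\subseteq U+A_i$ for a fixed $m_0\in M$ and note $(U+A_i)\setminus A_i\subseteq\partial_U A_i$, whence $\theta(A_i)\le\theta(M+A_i)\le\theta(A_i)+\theta(\partial_U A_i)$ and thus $\theta(M+A_i)/\theta(A_i)\to 1$; together with the boundary bound this yields the Van Hove property of $(M+A_i)$. (The first two claims also follow from Proposition~\ref{pro:VanHovenets+K} applied to $U$ after sandwiching.)

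For the existence of $(B_i)$ I would take the erosion $B_i:=\bigcap_{m\in M}(A_i-m)=\{g\in A_i;\,M+g\subseteq A_i\}$, which is compact and satisfies $M+B_i\subseteq A_i$ by construction. The crucial observation is the complement identity $B_i^c=A_i^c-M$, so that $\overline{B_i^c}=\overline{A_i^c}-M$. Estimating $A_i\setminus B_i=A_i\cap(A_i^c-M)\subseteq\partial_U A_i$ gives $\theta(B_i)/\theta(A_i)\to 1$. For the Van Hove property I would compute, using $B_i\subseteq A_i$ and the complement identity (and enlarging both $K$ and $K-M$ to $K+U$),
\[\partial_K B_i=(K+B_i)\cap(K+\overline{B_i^c})\subseteq(K+A_i)\cap\bigl((K-M)+\overline{A_i^c}\bigr)\subseteq\partial_{K+U}(A_i),\]
again of measure $o(\theta(A_i))$, which finishes the argument.

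The main obstacle is precisely the Van Hove property of $(M+A_i)$ and of $(B_i)$: one cannot argue by a naive sandwich, since neither a subset nor a superset of a Van Hove net need be Van Hove, so the additive structure of the dilation and of the erosion must be exploited through the two boundary inclusions above. The delicate points are the passages to interiors and complements, namely $\operatorname{int}(M+A)\supseteq m+\operatorname{int}A$ and $B_i^c=A_i^c-M$; this is exactly where the compactness of $M$ and the choice of the symmetric neighbourhood $U\supseteq M$ enter, allowing me to replace the possibly non-neighbourhood set $M$ by $U$ and absorb it into an enlarged boundary $\partial_{U+K}(A_i)$.
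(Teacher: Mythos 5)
Your proof is correct, and for the substantive part of the statement (the existence of the eroded net $(B_i)_{i\in I}$) it follows essentially the same route as the paper: you erode $A_i$ to $\{g;\,M+g\subseteq A_i\}$, show the loss $A_i\setminus B_i$ sits inside a $K$-boundary of $A_i$, and control $\partial_K B_i$ by $\partial_{K+M}A_i$ (the paper's Claim 2, your inclusion $\partial_K B_i\subseteq\partial_{K+U}A_i$). The paper handles a general compact $M$ by first translating and symmetrizing so that $0\in M$ and $M=-M$, whereas you keep $M$ and absorb it into the symmetric neighbourhood $U$; both work, but note a small inconsistency in your write-up: the identity $\bigcap_{m\in M}(A_i-m)=\{g\in A_i;\,M+g\subseteq A_i\}$ and the inclusion $B_i\subseteq A_i$ that you invoke in the boundary estimate require $0\in M$, so you should either erode by $U$ (which also gives $M+B_i\subseteq U+B_i\subseteq A_i$) or perform the paper's normalization first. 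The genuine difference is in the first two claims: the paper simply cites \cite[Proposition 2.3]{04}, while you give a self-contained argument via the inclusion $\partial_K(M+A)\subseteq M+\partial_K A$ (proved through $\overline{M+A}=M+\overline{A}$ and $\operatorname{int}(M+A)\supseteq m+\operatorname{int}A$) together with the sandwich $m_0+A_i\subseteq M+A_i\subseteq U+A_i$; this is a clean, elementary replacement for the external reference and is correct as stated.
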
 
	\begin{proof}
	See \cite[Proposition 2.3]{04} for the first statement. 
		To show the second statement we assume first that $M$ is symmetric and contains $0$. 
	Set $B_i:=\overline{\{g\in G;\, M+g\subseteq A_i\}}$ for $i\in I$. As $A_i$ is closed, we obtain $M+B_i\subseteq A_i$ for all $i\in I$. 
		
		\textbf{Claim 1:} For all $i\in I$ we have $B_i\subseteq A_i\subseteq B_i\cup \partial_M A_i$.\\
		As $M$ contains $0$ and $A_i$ is closed we obtain $B_i\subseteq A_i$. Now assume that $g\in A_i$. Then $g\in M+\overline{A_i}$. Hence, whenever $g\notin\partial_M A_i=(M+\overline{A_i})\cap (M+\overline{A_i^c})$, then $g\notin M+\overline{A_i^c}\supseteq M+A_i^c$. As $g\notin M+ A_i^c$ is equivalent to  $(M+g)\cap A_i^c=\emptyset$ we obtain $M+g\subseteq A_i$, which implies $g\in B_i$. 
		
		\textbf{Claim 2:} For all $i \in I$ and $K\subseteq G$ compact we have $\partial_K B_i \subseteq \partial_{K+M} A_i$. \\
		From Claim 1 and $0\in M$ we obtain $B_i\subseteq A_i\subseteq M+\overline{A_i}$. Furthermore for $g\in B_i^c$ we have $M+g\not \subseteq A_i$, i.e.\ $(M+g)\cap A_i^c\neq \emptyset$. As $(M+g)\cap A_i^c\neq \emptyset$ is equivalent to $g\in M+A_i^c$ we obtain $B_i^c\subseteq M+A_i^c$. Hence $\overline{B_i^c}\subseteq M+\overline{A_i^c}$ and we compute $\partial_K B_i=(K+B_i)\cap (K+\overline{B_i^c})\subseteq (K+M+A_i)\cap (K+M+\overline{A_i^c})=\partial_{K+M}A_i$. This proves Claim 2. 
		
	Using Claim 1 we compute 
	\[1\geq \frac{\theta(B_i)}{\theta(A_i)}\geq \frac{\theta(A_i)-\theta(\partial_M A_i)}{\theta(A_i)}= 1- \frac{\theta(\partial_M A_i)}{\theta(A_i)}\rightarrow 1.\]	
	Hence $\lim_{i\in I}\theta(B_i)/\theta(A_i)=1$. To show that $(B_i)_{i\in I}$ is a Van Hove net, let $K\subseteq G$ be a compact subset. %As $(A_i)_{i\in I}$ is a Van Hove net we obtain we obtain $\lim_{i\in I}\kla{\theta(A_i)-\theta(\partial_M A_i)}/\theta(A_i)=1$. %Furthermore from Claim 1 we obtain $\theta(B_i)\leq \theta(A_i)-\theta(\partial_M A_i)$ for all $i\in I$. 
	Using Claim 2 and $\lim_{i\in I}\theta(B_i)/\theta(A_i)=1$ we obtain 
	
\[0\leq \limsup_{i\in I}\frac{\theta(\partial_K B_i)}{\theta(B_i)}\leq \limsup_{i\in I}\frac{\theta(\partial_{K+M}A_i)}{\theta(A_i)}=0, \]
which shows $(B_i)_{i\in I}$ to be a Van Hove net. 
	This shows the statement for all compact and symmetric sets $M$ that contain $0$. 
	The general case can now be achieved as follows. Consider a general compact set $M$. If $M$ is empty, set $B_i:=A_i$. If not, choose $m\in M$. Then $N:=(M-m)\cup -(M-m)$ is compact, symmetric and contains $0$. The arguments above give that there is a Van Hove net $(B_i')_{i\in I}$ such that $N+B_i'\subseteq A_i$ for all $i\in I$ and $\lim_{i\in I}\theta(B_i')/\theta(A_i)=1$. Setting $B_i:=-m+B_i'$, we obtain a Van Hove net $(B_i)_{i\in I}$ that satisfies $\lim_{i\in I}\theta(B_i)/\theta(A_i)=1$ and $M+B_i=M-m+B_i'\subseteq N+B_i'\subseteq A_i$ for all $i\in I$. 
	\end{proof}
	
\begin{remark}
	Note that we have not used the commutativity of the group in the previous proof and thus the statement holds for Van Hove nets in arbitrary locally compact groups. 
\end{remark}

\begin{lemma}
	We have $\lim_{i\in I}\theta(A_i)=\infty$, whenever $(A_i)_{i\in I}$ is a Van Hove net in a non-compact LCA group $G$. 
	\end{lemma}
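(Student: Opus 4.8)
The plan is to fix a symmetric compact neighbourhood $U$ of $0$ (so that $0 < \theta(U) < \infty$, since a non-empty open set has positive Haar measure and $U$ is pre-compact) and to prove that for every $N\in\N$ one \emph{eventually} has $\theta(A_i)\geq N\,\theta(U)$. As $N$ is arbitrary, this is exactly the assertion $\lim_{i\in I}\theta(A_i)=\infty$. The strategy is to exhibit, for large $i$, $N$ pairwise disjoint translates of $U$ sitting inside $A_i$.

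The first step is purely geometric and is the only place where non-compactness is used. I would construct a sequence $(x_n)_{n\in\N}$ in $G$ with $x_1=0$ such that the translates $(x_n+U)_{n\in\N}$ are pairwise disjoint. This is done inductively: having chosen $x_1,\dots,x_n$, the set $\bigcup_{k=1}^n(x_k+U+U)$ is a finite union of translates of the compact set $U+U$, hence compact; as $G$ is non-compact it cannot equal $G$, so I may pick $x_{n+1}$ outside it. Because $U=-U$, the relation $x_{n+1}-x_k\notin U+U$ forces $(x_{n+1}+U)\cap(x_k+U)=\emptyset$, giving the desired disjointness.

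Next I would pass to an inner net. Applying Proposition \ref{pro:VanHovenets-K} with $M=U$ yields a Van Hove net $(B_i)_{i\in I}$ with $U+B_i\subseteq A_i$ for all $i$ and $\lim_{i\in I}\theta(B_i)/\theta(A_i)=1$; in particular $g\in B_i$ implies $g+U\subseteq A_i$, and $\theta(B_i)>0$ eventually. Being Van Hove, $(B_i)_{i\in I}$ is in particular ergodic, so for each fixed $n$ we have $\theta\big((B_i-x_n)\,\Delta\,B_i\big)/\theta(B_i)\to 0$.

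Now fix $N$. The crux of the argument, and the step I expect to be the main obstacle, is to show that the common intersection $\bigcap_{n=1}^N(B_i-x_n)$ is non-empty for large $i$: almost-invariance controls each translate individually, and one must check this survives intersecting $N$ of them. A union bound suffices. Since $x_1=0$ the intersection lies in $B_i$, and
\[
\theta\Big(B_i\setminus\bigcap_{n=1}^N(B_i-x_n)\Big)\leq\sum_{n=1}^N\theta\big(B_i\setminus(B_i-x_n)\big)\leq\sum_{n=1}^N\theta\big((B_i-x_n)\,\Delta\,B_i\big),
\]
which is $o(\theta(B_i))$ by ergodicity (a finite sum of terms tending to $0$). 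Hence the intersection has measure at least $\theta(B_i)\,(1-o(1))>0$ eventually, so it is non-empty for large $i$. Choosing $g$ in it gives $g+x_n\in B_i$, whence $g+x_n+U\subseteq A_i$, for every $n$; the $N$ sets $g+x_n+U$ are pairwise disjoint translates of the family built in the first step, each of measure $\theta(U)$, and all contained in $A_i$. Therefore $\theta(A_i)\geq N\,\theta(U)$ for large $i$, and letting $N\to\infty$ completes the proof.
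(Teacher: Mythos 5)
Your argument is correct, but it takes a considerably longer route than the paper's. The paper applies Proposition \ref{pro:VanHovenets-K} with an \emph{arbitrary} compact $M$: this yields a Van Hove net $(B_i)_{i\in I}$ with $M+B_i\subseteq A_i$ and $B_i\neq\emptyset$ eventually, whence $\theta(A_i)\geq\theta(g+M)=\theta(M)$ for any $g\in B_i$ and large $i$; the conclusion then follows from $\sup_{M\in\mathcal{K}(G)}\theta(M)=\theta(G)=\infty$, which holds by regularity because a non-compact LCA group has infinite Haar measure. You instead fix a single compact neighbourhood $U$ and do two extra pieces of work: you re-prove that a non-compact group admits infinitely many pairwise disjoint translates of $U$ (this is essentially the standard proof that $\theta(G)=\infty$), and you invoke ergodicity plus a union bound to find a common $g$ with $g+x_n\in B_i$ for $n=1,\dots,N$. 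Both steps are sound --- the inductive construction, the disjointness via $U=-U$, and the estimate $\theta\big(B_i\setminus\bigcap_{n=1}^N(B_i-x_n)\big)\leq\sum_{n=1}^N\theta\big((B_i-x_n)\Delta B_i\big)=o(\theta(B_i))$ are all fine --- but the second is avoidable: having built the disjoint translates $x_1+U,\dots,x_N+U$, you could simply apply Proposition \ref{pro:VanHovenets-K} with $M:=\bigcup_{n=1}^N(x_n+U)$, which already packs the $N$ disjoint copies of $U$ into $A_i$ around any single point of $B_i$, with no ergodicity needed. What your version buys is that it makes the role of non-compactness completely explicit and avoids quoting $\theta(G)=\infty$ as a black box; what it costs is the intersection argument, which is the most delicate (though correct) part of your write-up.
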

\begin{proof}
	Let $M\subseteq G$ compact. By Proposition \ref{pro:VanHovenets-K} there exists a Van Hove net $(B_i)_{i\in I}$ such that $B_i+M\subseteq A_i$. As $\theta(B_i)>0$ for large $i$ we obtain in particular $B_i\neq \emptyset$ and thus $\theta(M)\leq \theta(B_i+M)\leq \theta(A_i)$ for large $i$. Hence $\liminf_{i\in I}\theta(A_i)\geq \theta(M)$. As $G$ is assumed to be non-compact we obtain the statement from $\sup_{M\in \mathcal{K}(G)}\theta(M)=\theta(G)=\infty$. 
\end{proof}

\subsubsection{The Ornstein-Weiss lemma}\label{sub:OWG}
	The Ornstein-Weiss lemma is the key tool in order to define entropy  for LCA groups and goes back to	\cite{OrnsteinandWeiss,ward1992abramov, gromov1999topological,lindenstrauss2000mean, Krieger,FeketesLemma,ceccherini2020expansive} in the context of countable discrete amenable groups. A function $f\colon \mathcal{K}(G)\to \mathbb{R}$ is called \emph{subadditive}, if for all $A,B\in \mathcal{K}(G)$ we have
	$f(A\cup B)\leq f(A)+f(B).$ 
Furthermore $f$ is said to be \emph{invariant}, if for all $A\in \mathcal{K}(G)$ and for all $g\in G$ we have
		$f(Ag)= f(A).$ 
Furthermore we call $f$ \emph{monotone}, if for all $A,B\in \mathcal{K}(G)$ with $A\subseteq B$ we have
		$f(A)\leq f(B).$	
From \cite[Corollary 1.3]{04} we obtain the following version of the Ornstein-Weiss lemma.  
\begin{lemma}\label{lem:OrnsteinWeiss}
	Let $G$ be an LCA group that contains a Meyer set. Then for any subadditive, invariant and monotone function $f\colon \mathcal{K}(G)\to \mathbb{R}$ and for every Van Hove net $(A_i)_{i\in I}$ in $G$ the limit 
	\[\lim_{i\in I} \frac{f(A_i)}{\mu(A_i)}\]
	exists, is finite and does not depend on the choice of the Van Hove net. 
\end{lemma}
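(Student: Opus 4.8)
The plan is to obtain the statement from the cited Corollary 1.3 of \cite{04}, so that the real work is to check that the present hypotheses are exactly those required there and to translate notation. Two observations can be made immediately, without invoking \cite{04}, and they already pin down the expected value of the limit. First, taking $A=B=K$ in subadditivity gives $f(K)=f(K\cup K)\le 2f(K)$, hence $f(K)\ge 0$ for every $K\in\mathcal{K}(G)$; thus all the quotients are non-negative. Second, writing $L:=\inf_{K\in\mathcal{K}(G),\,\theta(K)>0}f(K)/\theta(K)\in[0,\infty)$, each $A_i$ is itself one of the sets in this infimum (the measures $\theta(A_i)$ are positive for large $i$ by assumption), so trivially $f(A_i)/\theta(A_i)\ge L$ and therefore $\liminf_{i\in I}f(A_i)/\theta(A_i)\ge L$. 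Since $L$ does not refer to the net at all, the whole content of the lemma is reduced to the matching upper bound.

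The substance is therefore the reverse inequality, and its engine is the Ornstein--Weiss quasi-tiling. For a fixed $\epsilon>0$ one selects finitely many compact tiles $T_1,\dots,T_k$, each of density $f(T_j)/\theta(T_j)$ within $\epsilon$ of $L$, such that every sufficiently invariant set is covered up to a remainder of relative measure at most $\epsilon$ by almost-disjoint translates of the $T_j$. Feeding such a covering of $A_i$ into monotonicity (to pass from $A_i$ to the covering union) and subadditivity (to split the union) bounds $f(A_i)$ by $\sum_j(\text{number of $T_j$-translates})\,f(T_j)$ plus the contribution of the remainder, while invariance replaces $f$ of each translate by $f(T_j)$. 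Dividing by $\theta(A_i)$ and letting the net run, the Van Hove property forces the boundary/remainder contribution to vanish, so that
\[
\limsup_{i\in I}\frac{f(A_i)}{\theta(A_i)}\;\le\;\inf_{K\in\mathcal{K}(G),\,\theta(K)>0}\frac{f(K)}{\theta(K)}+\epsilon .
\]
Letting $\epsilon\to 0$ and combining with the trivial lower bound from the first paragraph identifies the limit with $L$, which is finite and manifestly independent of the chosen Van Hove net.

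The main obstacle is the quasi-tiling step itself. In a non-discrete LCA group translates of a fixed compact tile overlap in sets of positive measure, so the combinatorial almost-disjoint covering arguments of the countable discrete case must be replayed measure-theoretically, and one must moreover arrange the tiles to be simultaneously good for tiling and of near-optimal density. This is precisely where the Meyer set hypothesis enters in \cite{04} (equivalently, the existence of a cut-and-project scheme with $G$ as physical space), and it is the reason I would invoke their Corollary~1.3 rather than reconstruct the construction. The remaining points are routine bookkeeping: $\theta(A_i)>0$ for large $i$ makes the quotients eventually well defined, and $\theta(A_i)\to\infty$ (by the preceding lemma, using Proposition~\ref{pro:VanHovenets-K}) guarantees that the boundary terms produced by the covering are negligible in the limit.
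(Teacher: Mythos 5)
Your proposal is correct and matches the paper's approach: the paper gives no independent proof of this lemma but simply imports it from \cite[Corollary 1.3]{04}, exactly as you do, with the Meyer set hypothesis serving to make that result applicable. Your additional remarks (non-negativity of $f$, identification of the limit with the infimum, and the sketch of the quasi-tiling argument) are accurate but supplementary to what the paper records.
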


\subsection{Topological entropy}
%As a Delone dynamical system acts naturally on compact uniform spaces we will now introduce topological entropy of actions of Ornstein-Weiss groups on compact uniform spaces. See \cite{04} for further reference. 
% During the whole section let $\pi\colon G\times X\to X$ be an action of an Ornstein-Weiss group $G$ on a compact uniform space $(X,\mathbb{U}_X)$.
Entropy may be one of the most important concepts presenting the complexity of a dynamical system. The notion of topological entropy goes back to \cite{adler1965topological} and was studied also in the context of actions of countable discrete amenable groups \cite{ward1992abramov, OrnsteinandWeiss, weiss2003actions, ollagnier2007ergodic, kerr2016ergodic}, where we are far from giving a full list of the important references. 
Let $\pi$ be an action of an LCA group $G$ on a compact Hausdorff space $X$.
%\subsubsection{Bowen entourage}
For any entourage $\eta\in \mathbb{U}_X$ and a compact subset $A\subseteq G$ we define the \emph{Bowen entourage} as 
\[\eta_A:=\left \{(x,y);\, \forall g\in {A} : (\pi^g(x),\pi^g(y))\in \eta\right\}=\bigcap_{g\in {A}}\left(\pi^g \times \pi^g\right)^{-1}(\eta).\] 
It is shown in \cite[Lemma 4.2]{04}, that the continuity of $\pi$ and the compactness of $X$ yield that $\eta_A$ is indeed an entourage. We have $\eta_{(A+B)}=(\eta_A)_B$, $\eta_{A\cup B}=\eta_A\cap\eta_B$ and $\eta_A\kappa_A\subseteq (\eta \kappa)_A$ for $\eta,\kappa\in \mathbb{U}_X$ and $A,B\subseteq G$ compact.

%\subsubsection{Topological entropy via small open covers}

For $\eta\in \mathbb{U}_X$ we say that a subset $M\subseteq X$ is \emph{$\eta$-small}, if $M^2\subseteq \eta$. We say, that a set $\mathcal{U}$ of subsets of $X$ is of \emph{scale $\eta$}, if $U$ is $\eta$-small for every $U\in \mathcal{U}$. We denote by 
	$\operatorname{cov}_X(\eta)$ 
the minimum cardinality of an open cover of $X$ of scale $\eta$. This quantity is well defined by the compactness of $X$. 
%\begin{remark}
%	In a compact metric space $(X,d)$ the \emph{diameter} of a subset $M$ is defined as 
%	$\operatorname{diam}(M):=\sup_{(x,y)\in M^2}d(x,y).$
%	Furthermore we set the \emph{diameter} of a set $\mathcal{U}$ of subsets of $X$ as 
%	$\operatorname{diam}(\mathcal{U})=\sup_{U\in \mathcal{U}} \operatorname{diam}(U)$. 
%	
%	Then for $\epsilon>0$ a subset $M\subseteq X$ satisfies $\operatorname{diam}(M)\leq \epsilon$ if and only if it is $[d\leq\epsilon]$-small, and a open cover satisfies $\operatorname{diam}(\mathcal{U})\leq \epsilon$ if and only if it is of scale $[d\leq\epsilon]$. 
%\end{remark}
It is presented in \cite[Subsection 4.2]{04} that the function $\mathcal{K}(G)\ni A\mapsto \log (\operatorname{cov}_X(\eta_{A}))$ is monotone, sub-additive and invariant for every $\eta\in \mathbb{U}_X$. If $G$ is assumed to contain Meyer sets, then the limit in the following definition of relative topological entropy exists and is independent of the choice of the Van Hove net $(A_i)_{i\in I}$. For $\eta$ we define the \emph{topological entropy of $\pi$ on scale $\eta$} as
	\[\operatorname{E}(\eta|\pi):=\lim_{i\in I} \frac{\log(\operatorname{cov}_X(\eta_{A_i}))}{\theta(A_i)}.\] 
	We furthermore define the \emph{topological entropy} of $\pi$ as
	$\operatorname{E}(\pi):=\sup_{\eta \in \mathbb{U}_X}\operatorname{E}(\eta,\pi).$
	
\begin{remark}\label{rem:FolnerandTopEntropy1}
	 We will see in Remark \ref{rem:Etopandergodic} below, that the definition of topological entropy depends on the choice of an ergodic net. Nevertheless for every  F\o lner net $(A_i)_{i\in I}$ we have 
	 $\operatorname{E}(\pi)=\sup_{\eta \in \mathbb{U}_X} \limsup_{i\in I} {\log(\operatorname{cov}_X(\eta_{A_i}))}/{\theta(A_i)}$  and a similar formula for the limit inferior. Indeed, for a compact neighbourhood $M$ of $0$ we obtain $(M+A_i)_{i\in I}$ to be a Van Hove net. As $\eta_M\in \mathbb{U}_X$ for all $\eta\in \mathbb{U}_X$ and $\lim_{i\in I}\theta(M+A_i)/\theta(A_i)=1$ we compute 
	\begin{align*}
		\operatorname{E}(\pi)&=\sup_{\eta\in \mathbb{U}_X}\lim_{i\in I}\frac{\log(\operatorname{cov}_X(\eta_{M+A_i}))}{\theta(M+A_i)}= \sup_{\eta\in \mathbb{U}_X}\lim_{i\in I}\frac{\log(\operatorname{cov}_X((\eta_M)_{A_i}))}{\theta(A_i)}\\
	&\leq \sup_{\epsilon\in \mathbb{U}_X}\liminf_{i\in I}\frac{\log(\operatorname{cov}_X(\epsilon_{A_i}))}{\theta(A_i)}
	\leq \sup_{\epsilon\in \mathbb{U}_X}\limsup_{i\in I}\frac{\log(\operatorname{cov}_X(\epsilon_{A_i}))}{\theta(A_i)}\\
	&\leq \sup_{\epsilon\in \mathbb{U}_X}\lim_{i\in I}\frac{\log(\operatorname{cov}_X(\epsilon_{M+A_i}))}{\theta(M+A_i)}=\operatorname{E}(\pi). 
	\end{align*}
\end{remark}

\section{Patch counting for general Delone sets}\label{sec:PatchcountingGeneral}

\subsection{Topological entropy via dense subsets}\label{sec:Entropy}

In Delone dynamical systems one knows the orbit $D_\omega$ to be dense in $X_\omega$. We will thus show that topological entropy can be calculated by considering separating subsets or spanning subsets of a dense subset instead of the whole space. During this section let $\pi$ be an action of an LCA-group that contains a Meyer set on a compact Hausdorff space $X$ and $D\subseteq X$ be a dense subset of $X$. 
	%Every open cover of $X$ restricts to an open cover of $D$. Thus for $\eta\in \mathbb{U}_X$ the minimal cardinality of an open cover of $D$ of scale $\eta$ is dominated by $\operatorname{cov}_X(\eta)$ and in particular there is a finite open cover of scale $\eta$. For the minimal cardinality of an open cover of $D$ of scale $\eta$ we denote $\operatorname{cov}_D(\eta).$
A subset $S\subseteq X$ is called \emph{$\eta$-separated}, if for every $s\in S$ there is no further element in $S$ that is $\eta$-close to $s$. 
We say that $S\subseteq X$ is \emph{$\eta$-spanning} for $D$, if for all $d\in D$ there is $s\in S$ such that $s$ is $\eta$-close to $d$ or $d$ is $\eta$-close to $s$.
The following is straightforward. For reference see \cite[Lemma 4.16]{04}.

\begin{lemma} \label{lem:Spasepcov}
	For $\eta\in \mathbb{U}_X$ the cardinality of an  $\eta$-separated subset of $D$ is bounded from above by $\operatorname{cov}_X(\eta)<\infty$ and every $\eta$-separated subset of $D$ of maximal cardinality is $\eta$-spanning for \nolinebreak$D$. 
\end{lemma}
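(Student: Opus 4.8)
The plan is to prove the two assertions separately, both by direct combinatorial arguments using only that every entourage contains the diagonal and that $\eta$-smallness of a set $U$ means $U\times U\subseteq\eta$. Throughout I recall that $x$ being $\eta$-close to $y$ means $(x,y)\in\eta$, so that an $\eta$-separated set $S$ is precisely a set in which $(s,t)\notin\eta$ for every \emph{distinct} pair $s,t\in S$ (in both orders), and that a neighbourhood of the diagonal always satisfies $(x,x)\in\eta$.

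For the upper bound, first I would fix a minimal open cover $\mathcal{U}$ of $X$ of scale $\eta$, which exists and is finite by the compactness of $X$ (as already noted for the well-definedness of $\operatorname{cov}_X(\eta)$), so that $|\mathcal{U}|=\operatorname{cov}_X(\eta)<\infty$. The key observation is that each member $U\in\mathcal{U}$ contains at most one point of any $\eta$-separated set $S$: if distinct $s,s'\in S$ both lay in $U$, then $(s',s)\in U\times U\subseteq\eta$ would make $s'$ be $\eta$-close to $s$, contradicting $\eta$-separatedness. Since $\mathcal{U}$ covers $X\supseteq S$, sending each $s\in S$ to a member of $\mathcal{U}$ containing it yields an injection $S\hookrightarrow\mathcal{U}$, whence $|S|\le\operatorname{cov}_X(\eta)$. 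In particular the cardinalities of $\eta$-separated subsets of $D$ are uniformly bounded by a finite number, so a maximal one exists, which legitimises the hypothesis of the second assertion.

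For the second assertion I would argue by contradiction, exploiting maximality. Suppose $S\subseteq D$ is $\eta$-separated of maximal cardinality but fails to be $\eta$-spanning for $D$; then there is $d\in D$ such that for every $s\in S$ neither is $s$ $\eta$-close to $d$ nor $d$ $\eta$-close to $s$, i.e.\ $(s,d)\notin\eta$ and $(d,s)\notin\eta$. Since $(d,d)\in\eta$, this same condition forces $d\notin S$, so $S':=S\cup\{d\}$ strictly enlarges $S$. I then check that $S'$ remains $\eta$-separated: any two distinct points of $S$ already avoid $\eta$ in both orders, and the only new ordered pairs, namely $(s,d)$ and $(d,s)$ for $s\in S$, avoid $\eta$ exactly by the choice of $d$. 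Thus $S'\subseteq D$ is $\eta$-separated with $|S'|=|S|+1$, contradicting maximality; hence $S$ is $\eta$-spanning for $D$.

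There is no substantial obstacle here, as signalled by the word ''straightforward''; the only point requiring care is bookkeeping the asymmetry of a general (not necessarily symmetric) entourage $\eta$, so that both the ''or'' in the definition of $\eta$-spanning and the two-sided nature of $\eta$-separatedness are handled without slippage. Unpacking the definitions at the outset, so that $\eta$-separated reads as $(s,t)\notin\eta$ for all distinct ordered pairs, makes both steps manifestly symmetric and removes any ambiguity in the two directions.
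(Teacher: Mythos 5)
Your proof is correct and is precisely the standard argument that the paper leaves implicit (the paper only remarks that the lemma is straightforward and defers to an external reference). Both halves --- the injection of an $\eta$-separated subset of $D$ into a finite minimal open cover of scale $\eta$, and the extension of a maximal $\eta$-separated set by a non-spanned witness $d\in D$ --- are carried out correctly, including the two-sided bookkeeping needed for a possibly non-symmetric entourage and the observation that $(d,d)\in\eta$ forces $d\notin S$.
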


In particular there are finite $\eta$-separated subsets of $D$ of maximal cardinality and finite subsets of $D$ that are $\eta$-spanning for $D$ of minimal cardinality.
We denote by $\operatorname{sep}_D(\eta)$ the maximal cardinality of an $\eta$-separated subset of $D$. Furthermore $\operatorname{spa}_D(\eta)$ is defined as the minimal cardinality of a subset of $D$ that is $\eta$-spanning for $D$.
%The notions $\operatorname{spa}_{D}$, $\operatorname{sep}_{D}$ and $\operatorname{cov}_{X}$ relate in the following way. 

\begin{lemma}\label{lem:SpaSepCovStaticRelation}
	For $\eta\in \mathbb{U}_X$, there is $\kappa\in \mathbb{U}_X$ such that for all symmetric $\epsilon\in \mathbb{U}_X$ with $\epsilon\subseteq \kappa$ and all $A\subseteq G$ compact we have 
	\begin{itemize}
		\item[(i)] $\operatorname{spa}_{D}(\eta_A)\leq \operatorname{sep}_{D}(\eta_A)\leq \operatorname{cov}_{X}(\eta_A)$ and 
	 	\item[(ii)]   $\operatorname{cov}_{X}(\eta_A)\leq \operatorname{spa}_{D}(\epsilon_A)$.
	 	\end{itemize}
\end{lemma}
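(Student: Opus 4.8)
The plan is to treat the two inequalities separately: (i) is purely ``static'' and should fall out of Lemma \ref{lem:Spasepcov} with nothing more than a substitution, while (ii) is where the parameter $\kappa$ is genuinely needed and where the real work lies.

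For (i) I would simply note that $\eta_A$ is itself an entourage of $X$ (this is exactly the content of \cite[Lemma 4.2]{04} recalled above), so Lemma \ref{lem:Spasepcov} applies verbatim with the single entourage $\eta_A$ in place of $\eta$. The bound on the size of an $\eta_A$-separated subset of $D$ yields $\operatorname{sep}_D(\eta_A)\le \operatorname{cov}_X(\eta_A)$, and the fact that a maximal $\eta_A$-separated subset $S\subseteq D$ is automatically $\eta_A$-spanning for $D$ yields $\operatorname{spa}_D(\eta_A)\le |S|=\operatorname{sep}_D(\eta_A)$. This part needs no hypothesis on $\epsilon$ or $\kappa$ and holds for every compact $A$.

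For (ii) I would first fix, using that $\mathbb{U}_X$ has a base of open symmetric entourages and that entourages admit ``square roots'', an open symmetric $\kappa\in\mathbb{U}_X$ with $\kappa\kappa\kappa\kappa\subseteq\eta$. The claim is that this single $\kappa$ works for every symmetric $\epsilon\subseteq\kappa$ and every compact $A$. Given such $\epsilon$ and $A$, let $S\subseteq D$ be $\epsilon_A$-spanning for $D$ of minimal cardinality $\operatorname{spa}_D(\epsilon_A)$; the goal is to produce from $S$ an open cover of $X$ of scale $\eta_A$ with at most $|S|$ members, which forces $\operatorname{cov}_X(\eta_A)\le |S|$. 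The candidate cover is $U_s:=\{x\in X;\,(x,s)\in O\,\epsilon_A\}$, where $O:=\mathrm{int}(\kappa_A)$, and I would check three things. Openness: rewriting $U_s=\bigcup_{z\in\epsilon_A[s]}O[z]$ as a union of sections of the open symmetric entourage $O$ (open because $O$ is open, symmetric and an entourage since $\kappa_A$ is) shows each $U_s$ is open. Covering: for arbitrary $x\in X$ the section $O[x]$ is an open neighbourhood of $x$, so by density of $D$ there is $d\in D\cap O[x]$, and since $S$ is $\epsilon_A$-spanning there is $s\in S$ with $(d,s)\in\epsilon_A$, whence $(x,s)\in O\,\epsilon_A$ and $x\in U_s$. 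Smallness: for $x,y\in U_s$ I would chain the two relations through $s$, using $O\subseteq\kappa_A$, $\epsilon_A\subseteq\kappa_A$, the symmetry of $\kappa_A$, and the recorded identities $\eta_A\kappa_A\subseteq(\eta\kappa)_A$ and $\delta\subseteq\eta\Rightarrow\delta_A\subseteq\eta_A$, to conclude $(x,y)\in\kappa_A\kappa_A\kappa_A\kappa_A\subseteq(\kappa\kappa\kappa\kappa)_A\subseteq\eta_A$.

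The main obstacle to anticipate is the openness bookkeeping: the Bowen entourage $\kappa_A$ of an open $\kappa$ need not be open, since it is an intersection over the possibly infinite set $A$ of open preimages, so one cannot take $\kappa_A[s]$ itself as the cover. Passing to $O=\mathrm{int}(\kappa_A)$ and using density of $D$ to replace an arbitrary $x$ by a nearby point of $D$ is precisely what repairs this, at the cost of one extra composition factor; this is why the clean statement requires $\kappa\kappa\kappa\kappa\subseteq\eta$ rather than merely $\kappa\kappa\subseteq\eta$. Everything else is routine manipulation of the composition, symmetry, and monotonicity properties of Bowen entourages listed before the statement.
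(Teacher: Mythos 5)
Your proposal is correct and follows essentially the same route as the paper: part (i) is read off from Lemma \ref{lem:Spasepcov}, and part (ii) takes a symmetric $\kappa$ with $\kappa\kappa\kappa\kappa\subseteq\eta$ and turns a minimal $\epsilon_A$-spanning subset of $D$ into an open cover of scale $\eta_A$ by fattening each point with a small open symmetric entourage. The only cosmetic difference is that you fatten by $\mathrm{int}(\kappa_A)$ while the paper picks an open symmetric $\chi\subseteq\epsilon_A$ and uses the cover $\{\chi\epsilon_A[s]\}_{s\in S}$ of scale $\chi\epsilon_A\epsilon_A\chi\subseteq\eta_A$; both land inside the same four-fold composition.
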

\begin{proof}
	From Lemma \ref{lem:Spasepcov} we obtain (i). 
	To show (ii) let $\kappa\in \mathbb{U}_X$ be a symmetric entourage such that $\kappa\kappa\kappa\kappa\subseteq \eta$. Then for $A\subseteq G$ compact and every symmetric entourage $\epsilon\subseteq \kappa$ we have $\epsilon_A\epsilon_A\epsilon_A\epsilon_A\subseteq(\epsilon\epsilon\epsilon\epsilon)_A\subseteq \eta_A$. Let $\chi\in \mathbb{U}_X$ be symmetric and open such that $\chi\subseteq \epsilon_A$. If $S\subseteq D$ is $\epsilon_A$-spanning for $D$, then $\{\chi\epsilon_A[s];\, s\in S\}$ is an open cover of $\epsilon_A[D]=X$ of scale $\chi\epsilon_A\epsilon_A\chi$ and we obtain $\operatorname{cov}_X(\chi\epsilon_A\epsilon_A\chi)\leq\operatorname{spa}_D(\epsilon_A)$. As $\chi\epsilon_A\epsilon_A\chi\subseteq \eta_A$ we obtain the statement. 
	 \end{proof}

\begin{proposition} \label{pro:ENTROPY_SPA_SEP_COV_D}
		Let $\pi$ be an action of an LCA-group that contains Meyer sets on a compact Hausdorff space $X$ and let $D\subseteq X$ be a dense subset. For any F\o lner net $(A_i)_{i\in I}$ and any base $\mathbb{B}_X$ of $\mathbb{U}_X$ we have 
\[
\operatorname{E}(\pi)
=\sup_{\eta \in \mathbb{B}_X} \limsup_{i\in I} \frac{\log(\operatorname{spa}_D(\eta_{A_i}))}{\theta(A_i)}=\sup_{\eta \in \mathbb{B}_X} \limsup_{i\in I} \frac{\log(\operatorname{sep}_D(\eta_{A_i}))}{\theta(A_i)}.\]
Similar statements are satisfied whenever a limit inferior is considered.
\end{proposition}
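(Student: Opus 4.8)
The plan is to sandwich both right-hand suprema between $\operatorname{E}(\pi)$ and $\operatorname{E}(\pi)$, using the termwise comparisons of Lemma~\ref{lem:SpaSepCovStaticRelation} together with the F\o lner-net description of $\operatorname{E}(\pi)$ recorded in Remark~\ref{rem:FolnerandTopEntropy1}. A preliminary step is to replace the supremum over all of $\mathbb{U}_X$ in that remark by a supremum over the base $\mathbb{B}_X$. Since $\beta\subseteq\eta$ forces $\beta_{A}\subseteq\eta_{A}$ and $\operatorname{cov}_X$ is decreasing in its entourage (a cover of scale $\beta$ is also of scale $\eta$), every $\eta\in\mathbb{U}_X$ dominates some $\beta\in\mathbb{B}_X$ with $\beta\subseteq\eta$, giving $\operatorname{cov}_X(\eta_{A_i})\leq\operatorname{cov}_X(\beta_{A_i})$. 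Passing to $\limsup_{i}$ and taking suprema then shows $\operatorname{E}(\pi)=\sup_{\eta\in\mathbb{B}_X}\limsup_i \log(\operatorname{cov}_X(\eta_{A_i}))/\theta(A_i)$, and identically with $\liminf$.

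For the two upper estimates I would feed part~(i) of Lemma~\ref{lem:SpaSepCovStaticRelation} into this formula: for every $\eta\in\mathbb{B}_X$ and every $i$ one has $\operatorname{spa}_D(\eta_{A_i})\leq\operatorname{sep}_D(\eta_{A_i})\leq\operatorname{cov}_X(\eta_{A_i})$. Taking logarithms, dividing by $\theta(A_i)$, then $\limsup_i$ and finally $\sup_{\eta\in\mathbb{B}_X}$ yields $\sup_{\eta\in\mathbb{B}_X}\limsup_i\log(\operatorname{spa}_D(\eta_{A_i}))/\theta(A_i)\leq\sup_{\eta\in\mathbb{B}_X}\limsup_i\log(\operatorname{sep}_D(\eta_{A_i}))/\theta(A_i)\leq\operatorname{E}(\pi)$.

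The substantive step is the reverse inequality $\operatorname{E}(\pi)\leq\sup_{\eta\in\mathbb{B}_X}\limsup_i\log(\operatorname{spa}_D(\eta_{A_i}))/\theta(A_i)$, where part~(ii) of the Lemma does the work. I would fix $\eta\in\mathbb{B}_X$, take the entourage $\kappa$ supplied by the Lemma, choose a symmetric open $\epsilon\in\mathbb{U}_X$ with $\epsilon\subseteq\kappa$, and then a base element $\beta\in\mathbb{B}_X$ with $\beta\subseteq\epsilon$. Part~(ii) gives $\operatorname{cov}_X(\eta_{A_i})\leq\operatorname{spa}_D(\epsilon_{A_i})$, while the decreasing monotonicity of $\operatorname{spa}_D$ together with $\beta\subseteq\epsilon$ gives $\operatorname{spa}_D(\epsilon_{A_i})\leq\operatorname{spa}_D(\beta_{A_i})$. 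Chaining these, applying $\limsup_i$, and taking $\sup_{\eta\in\mathbb{B}_X}$ (using the base reduction of the first paragraph on the left) produces $\operatorname{E}(\pi)\leq\sup_{\beta\in\mathbb{B}_X}\limsup_i\log(\operatorname{spa}_D(\beta_{A_i}))/\theta(A_i)$. Combining with the previous paragraph collapses all three quantities to $\operatorname{E}(\pi)$, and the $\liminf$ versions follow verbatim since every inequality used holds termwise in $i$.

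I expect the only real subtlety to be the quantifier bookkeeping in the last step: the auxiliary entourages $\kappa,\epsilon,\beta$ are all chosen after fixing $\eta$ and so may depend on it, which is harmless under the outer $\sup_\eta$. One must also keep the symmetry hypothesis of part~(ii) attached to $\epsilon$ alone, using the subsequent passage to $\beta\in\mathbb{B}_X$ purely through anti-monotonicity of $\operatorname{spa}_D$, so that the bounding term legitimately lands inside the supremum over the base rather than over all of $\mathbb{U}_X$.
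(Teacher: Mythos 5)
Your argument is correct and follows the paper's own route: the upper bounds come from Lemma \ref{lem:SpaSepCovStaticRelation}(i) combined with the F\o lner-net formula for $\operatorname{E}(\pi)$ in terms of $\operatorname{cov}_X$, and the reverse inequality from part (ii) of the same lemma; your first and last paragraphs merely spell out the ``simple monotonicity argument'' by which the paper reduces a general base $\mathbb{B}_X$ to $\mathbb{U}_X$. The care you take to keep the symmetry hypothesis attached to $\epsilon$ and to pass to a base element only via anti-monotonicity of $\operatorname{spa}_D$ is exactly the right bookkeeping.
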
 
\begin{proof} 
By a simple monotonicity argument we can restrict to $\mathbb{B}_X=\mathbb{U}_X$. %As $\eta \mapsto \limsup_{i\in I} {\log(\operatorname{spa}_D(\eta_{A_i}))}/{\mu(A_i)}$ and the other similar terms are  decreasing\footnote{We call a map $f\colon A \to B$ \emph{decreasing}, whenever $f$ is monotone after we reverse the ordering on $A$. Here $\mathbb{U}_X$ is ordered by set inclusion.}, it suffices to show the statement for the base $\mathbb{B}_X=\mathbb{U}_X$. 
By Lemma \ref{lem:SpaSepCovStaticRelation}(i) we obtain 
\begin{align*}
 \sup_{\eta\in \mathbb{U}_X} \limsup_{i\in I} \frac{\log(\operatorname{spa}_D(\eta_{A_i}))}{\theta(A_i)}
	&\leq \sup_{\eta\in \mathbb{U}_X} \limsup_{i\in I} \frac{\log(\operatorname{sep}_D(\eta_{A_i}))}{\theta(A_i)}\\
	&\leq \sup_{\eta\in \mathbb{U}_X}\limsup_{i\in I} \frac{\log(\operatorname{cov}_X(\eta_{A_i}))}{\theta(A_i)}=\operatorname{E}(\pi)
\end{align*}
 	From Lemma \ref{lem:SpaSepCovStaticRelation}(ii) it follows that
 	\[\operatorname{E}(\pi)=\sup_{\eta\in \mathbb{U}_X}\limsup_{i\in I} \frac{\log(\operatorname{cov}_X(\eta_{A_i}))}{\theta(A_i)}
			\leq \sup_{\epsilon\in \mathbb{U}_X} \limsup_{i\in I} \frac{\log(\operatorname{spa}_D(\epsilon_{A_i}))}{\theta(A_i)}.\] 
An analogue argument shows the result for the limit inferior. 
\end{proof}

\begin{remark}\label{rem:Etopandergodic}
	Note that by the arguments presented in this subsection yield that for every ergodic net $(A_i)_{i\in I}$ we have 
	\[\sup_{\eta \in \mathbb{B}_X} \limsup_{i\in I} \log(\operatorname{cov}_X(\eta_{A_i})) /\theta(A_i) = \sup_{\eta \in \mathbb{B}_X} \limsup_{i\in I} \log(\operatorname{spa}_D(\eta_{A_i}))/\theta(A_i).\]
	and similar formulas concerning $\operatorname{sep}_D$ and the limit inferior. 
	
	Nevertheless $\operatorname{E}(\pi)$ is in general not independent of the choice of an ergodic net. Consider for example the Delone set $\omega:=\mathbb{Z}\setminus \{0\}$ in $\mathbb{R}$ and the corresponding Delone dynamical system $\pi_\omega$. Note that $X_\omega=\{\mathbb{Z}-g;\, g\in \mathbb{R}\}\cup \{\omega-g;\, g\in \mathbb{R}\}$ and that $E(\pi_\omega)=0$. Set $F_n:=\mathbb{Z}\cap [1,e^n]$ and $A_n:=[1,n]\cup F_n$.  As $(A_n+g)\Delta A_n\subseteq \mathbb{Z}\cup (\mathbb{Z}+g)\cup (([1,n]+g)\Delta [1,n])$ for all $g\in\mathbb{R}$ we obtain $(A_n)_{n\in \mathbb{N}}$ to be an ergodic net. Let $V$ be the open and centred ball of radius $1/2$. One easily sees that $M:=\{\omega-f;\, f\in F_n\}$ is an $\epsilon_\omega([-e^n,0],V)$-separated subset of $D_\omega$. Setting $\eta:=\epsilon_\omega\left([0,1],V\right)$ we obtain from Lemma \ref{lem:epsilonentourageandBowen} below that $\eta_{A_n}=\epsilon_\omega\left([0,1]-A_n,V\right)\subseteq \epsilon_\omega([-e^n,0],V)$ and thus $M$ to be $\eta_{A_n}$-separated. Thus by Lemma \ref{lem:SpaSepCovStaticRelation} we get $\operatorname{cov}_{X_\omega}(\eta_{A_n})\geq \operatorname{sep}_{D_\omega}(\eta_{A_n})\geq |M|=|F_n|\geq e^n-1$. We compute 
	$\liminf_{n\to \infty}\log(\operatorname{cov}_{X_\omega}(\eta_{A_n}))/\theta(A_n)\geq 1$. 
\end{remark}

\subsection{Non centred patch counting}

In order to show that the topological entropy of a Delone dynamical system can be calculated via $\operatorname{pat}_\omega$ we introduce intermediate concepts between $A$-patch representations and spanning sets in the corresponding Delone dynamical system. This shows in particular that our definition of patch counting entropy is equivalent to the definition given in \cite{frank2012fusion}.
	During this section we assume $G$ to be an LCA group that contains Meyer sets and $\omega\subseteq G$ to be a Delone set. 
Let $A\subseteq G$ be a compact subset and $V$ be an open neighbourhood of $0$. We say that $F\subseteq G$ is a \emph{non-centred $A$-patch representation} of \emph{scale} $V$ for $\omega$, if for any $g\in G$ there is $f\in F$ s.t. 
\[\omega-g \overset{A,V}{\approx} \omega-f.\]
Note that this is equivalent to $\{\omega-f;\, f\in F\}$ being an $\epsilon_\omega(A,V)$-spanning for $D_\omega$.
Thus by Lemma \ref{lem:Spasepcov} we obtain that there is a finite non-centred  $A$-patch representation on scale $V$ for $\omega$. We define $\operatorname{npat}_\omega(A,V)$ 
	as the minimal cardinality of a non-centred $A$-patch representation of scale $V$ for $\omega$, i.e.\ $\operatorname{npat}_\omega(A,V):=\operatorname{spa}_{D_\omega}(\epsilon_\omega(A,V))$.  
	
	\begin{lemma} \label{lem:epsilonentourageandBowen}
	For $A,K\subseteq G$ compact and any open neighbourhood $V$ of $0$ we have \[\epsilon_\omega(K,V)_A=\epsilon_\omega(K-A,V).\]
\end{lemma}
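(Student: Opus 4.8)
The plan is to unwind both sides directly from the definitions and reduce the identity to the elementary fact that the agreement relation $\overset{\cdot,V}{\approx}$ distributes over unions in its compact argument. First I would fix $\xi,\zeta\in X_\omega$ and rewrite the left-hand side. By the definition of the Bowen entourage, $(\xi,\zeta)\in\epsilon_\omega(K,V)_A$ means that $(\pi^g(\xi),\pi^g(\zeta))=(\xi+g,\zeta+g)\in\epsilon_\omega(K,V)$ for every $g\in A$. Since $X_\omega$ is invariant under the action, the translated pairs again lie in $X_\omega\times X_\omega$, so the restriction to $X_\omega\times X_\omega$ in $\epsilon_\omega(K,V)=\epsilon(K,V)\cap(X_\omega\times X_\omega)$ is automatic, and I may work throughout with the unrestricted relation $\overset{K,V}{\approx}$.

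Next I would translate the condition at each $g\in A$ back to the untranslated sets. The key computation is the identity $(\xi+g)\cap K=\bigl(\xi\cap(K-g)\bigr)+g$, which holds because $x+g\in(\xi+g)\cap K$ if and only if $x\in\xi$ and $x\in K-g$. Applying this to both inclusions that define $(\xi+g)\overset{K,V}{\approx}(\zeta+g)$, namely $(\xi+g)\cap K\subseteq(\zeta+g)+V$ and $(\zeta+g)\cap K\subseteq(\xi+g)+V$, and cancelling the common translate $g$ on both sides, I obtain that $(\xi+g,\zeta+g)\in\epsilon(K,V)$ is equivalent to $\xi\overset{K-g,V}{\approx}\zeta$. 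Hence the left-hand side is exactly the set of pairs $(\xi,\zeta)\in X_\omega^2$ with $\xi\overset{K-g,V}{\approx}\zeta$ for every $g\in A$.

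Finally I would compare this with the right-hand side. Writing $K-A=\bigcup_{g\in A}(K-g)$ and using $\xi\cap(K-A)=\bigcup_{g\in A}\bigl(\xi\cap(K-g)\bigr)$, the inclusion $\xi\cap(K-A)\subseteq\zeta+V$ holds if and only if $\xi\cap(K-g)\subseteq\zeta+V$ for every $g\in A$, and symmetrically with the roles of $\xi$ and $\zeta$ exchanged. Therefore $\xi\overset{K-A,V}{\approx}\zeta$ holds if and only if $\xi\overset{K-g,V}{\approx}\zeta$ holds for all $g\in A$, which is precisely the description of the left-hand side obtained above. This gives $\epsilon_\omega(K,V)_A=\epsilon_\omega(K-A,V)$.

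The only genuinely delicate point is the bookkeeping in the translation step: one must check that the error neighbourhood $V$ is \emph{not} translated while the compact window $K$ \emph{is}, which is exactly what happens because the same shift $g$ acts simultaneously on both coordinates $\xi$ and $\zeta$, so the $+V$ term is unaffected by cancelling $g$. Everything else is routine distributivity of intersection over union, so I expect no real obstacle beyond keeping these translations straight.
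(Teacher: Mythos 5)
Your proof is correct and follows essentially the same route as the paper: both rest on the translation identity $(\xi+g)\cap K=\bigl(\xi\cap(K-g)\bigr)+g$ and the decomposition $\xi\cap(K-A)=\bigcup_{g\in A}\bigl(\xi\cap(K-g)\bigr)$, the only cosmetic difference being that you phrase the argument as a chain of equivalences while the paper verifies the two inclusions separately.
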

\begin{proof} To show $\epsilon_\omega(K-A,V)\subseteq \epsilon_\omega(K,V)_A$, let $(\xi,\zeta)\in \epsilon_\omega(K-A,V)$. For $g\in A$ we obtain $\xi\cap (K-g)\subseteq \xi\cap (K-A)\subseteq \zeta+V$, hence 
\[\pi_\omega^g(\xi)\cap K=(\xi+g)\cap K\subseteq \zeta +g+V=\pi_\omega^g(\zeta)+V.\]
Similarly one shows $\pi_\omega^g(\zeta)\cap K\subseteq \pi_\omega^g(\xi)+V$. This proves $(\pi_\omega^g(\xi),\pi_\omega^g(\zeta))\in \epsilon_\omega(K,V)$, i.e.\ $(\xi,\zeta)\in \epsilon_\omega(K,V)_g$ for every $g\in A$. We thus obtain
\[\epsilon_\omega(K-A,V)\subseteq \bigcap_{g\in A}\epsilon_\omega(K,V)_g=\epsilon_\omega(K,V)_A.\]

It remains to show  $\epsilon_\omega(K,V)_A\subseteq \epsilon_\omega(K-A,V)$. For $(\xi,\zeta)\in \epsilon_\omega(K,V)_A$ we have $(\pi_\omega^g(\xi),\pi_\omega^g(\zeta))\in \epsilon_\omega(K,V)$ for every $g\in A$, hence 
\[(\xi+g)\cap K=\pi_\omega^g(\xi)\cap K\subseteq \pi_\omega^g(\zeta)+V=\zeta +g+V.\]
We obtain $\xi\cap(K-g)\subseteq \zeta+V$ for all $g\in A$ and compute 
\[\xi\cap (K-A)=\xi\cap\left(\bigcup_{g\in A}(K-g)\right)=\bigcup_{g\in A}\left(\xi\cap(K-g)\right)\subseteq \zeta+V.\]
As one shows similarly that $\zeta\cap(K-A)\subseteq \xi+V$, we conclude $(\xi,\zeta)\in \epsilon_\omega(K-A,V)$. 
\end{proof}

\begin{proposition}\label{pro:Enpat=Etop}
	For every Van Hove net $(A_i)_{i\in I}$ we have
\[\operatorname{E}(\pi_\omega)=\sup_{V\in \mathcal{N}(G)} \liminf_{i\in I} \frac{\log(\operatorname{npat}_\omega(A_i,V))}{\theta(A_i)}=\sup_{V\in \mathcal{N}(G)} \limsup_{i\in I} \frac{\log(\operatorname{npat}_\omega(A_i,V))}{\theta(A_i)}.\]
\end{proposition}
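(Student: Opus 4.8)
The plan is to reduce everything to the entropy formula already proved in Proposition~\ref{pro:ENTROPY_SPA_SEP_COV_D}, applied to the dense orbit $D_\omega\subseteq X_\omega$ and the local rubber base $\mathbb{B}_{lr}(\omega)$, and then to trade the Bowen window $K-A_i$ appearing there for the bare window $A_i$ occurring in $\operatorname{npat}_\omega(A_i,V)=\operatorname{spa}_{D_\omega}(\epsilon_\omega(A_i,V))$. Since a Van Hove net is F\o lner, Proposition~\ref{pro:ENTROPY_SPA_SEP_COV_D} combined with Lemma~\ref{lem:epsilonentourageandBowen} gives
\[\operatorname{E}(\pi_\omega)=\sup_{(K,V)}\limsup_{i}\frac{\log\operatorname{cov}_{X_\omega}(\epsilon_\omega(K-A_i,V))}{\theta(A_i)},\]
and the analogous identities with $\operatorname{spa}_{D_\omega}$/$\operatorname{sep}_{D_\omega}$ and with $\liminf$. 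I would first record that the reflected net $(-A_i)_i$ is again Van Hove (Haar measure is inversion invariant, so the Van Hove ratios are unchanged) and that, by Proposition~\ref{pro:VanHovenets-K}, each net $(K-A_i)_i=(K+(-A_i))_i$ is Van Hove with $\theta(K-A_i)/\theta(A_i)\to1$. For fixed $V$ the functional $A\mapsto\log\operatorname{cov}_{X_\omega}(\epsilon_\omega(A,V))$ is finite by compactness, monotone and invariant, and it is \emph{genuinely} subadditive, since $\epsilon_\omega(A\cup B,V)=\epsilon_\omega(A,V)\cap\epsilon_\omega(B,V)$ and open covers multiply under intersection; the Ornstein--Weiss Lemma~\ref{lem:OrnsteinWeiss} (available because $G$ contains a Meyer set) then produces a net-independent limit $\delta_V:=\lim_i\log\operatorname{cov}_{X_\omega}(\epsilon_\omega(A_i,V))/\theta(A_i)$. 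Feeding the density ratio $\to1$ into the display collapses the window $K-A_i$ to $A_i$ and yields $\operatorname{E}(\pi_\omega)=\sup_{V\in\mathcal N(G)}\delta_V$.

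The upper bound is then immediate: since $\operatorname{npat}_\omega(A_i,V)=\operatorname{spa}_{D_\omega}(\epsilon_\omega(A_i,V))\le\operatorname{cov}_{X_\omega}(\epsilon_\omega(A_i,V))$ by Lemma~\ref{lem:SpaSepCovStaticRelation}(i) (equivalently Lemma~\ref{lem:Spasepcov}), taking $\limsup$ and then $\sup_V$ shows that both $\sup_V\limsup_i$ and $\sup_V\liminf_i$ of $\log\operatorname{npat}_\omega(A_i,V)/\theta(A_i)$ are bounded above by $\sup_V\delta_V=\operatorname{E}(\pi_\omega)$.

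For the reverse inequality I would invoke the comparison of Lemma~\ref{lem:SpaSepCovStaticRelation}(ii) with the single fixed entourage $\eta:=\epsilon_\omega(\{0\},V)$, whose Bowen entourage over $-A_i$ is $\eta_{-A_i}=\epsilon_\omega(A_i,V)$ by Lemma~\ref{lem:epsilonentourageandBowen}. The lemma supplies $\kappa$ depending only on $V$ so that any symmetric base element $\epsilon=\epsilon_\omega(K',V')\subseteq\kappa$ satisfies $\operatorname{cov}_{X_\omega}(\epsilon_\omega(A_i,V))\le\operatorname{spa}_{D_\omega}(\epsilon_{-A_i})=\operatorname{npat}_\omega(K'+A_i,V')$. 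It then remains to thin the enlarged window $K'+A_i$ back to $A_i$: choosing $0\in K'$, the excess $R_i:=(K'+A_i)\setminus A_i$ lies in $\partial_{K'}A_i$, hence $\theta(R_i)/\theta(A_i)\to0$ by the Van Hove property. I would split $K'+A_i$ into $A_i$ and $R_i$ by a near-submultiplicativity estimate for $\operatorname{npat}_\omega$ and absorb the boundary factor using the trivial linear bound $\log\operatorname{cov}_{X_\omega}(\epsilon_\omega(B,V'))\le c_{V'}\theta(B)+d_{V'}$ (obtained from subadditivity, invariance, and covering $B$ by boundedly many translates of a fixed neighbourhood). This gives $\delta_V\le\liminf_i\log\operatorname{npat}_\omega(A_i,V'')/\theta(A_i)$ at a suitable finer scale $V''$, and $\sup_V$ then forces $\operatorname{E}(\pi_\omega)\le\sup_V\liminf_i\log\operatorname{npat}_\omega(A_i,V)/\theta(A_i)$; the resulting chain of inequalities pins the $\sup_V\liminf$ and $\sup_V\limsup$ expressions both to $\operatorname{E}(\pi_\omega)$.

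The main obstacle is precisely this window-thinning step. For Delone sets of infinite local complexity the patch entourages do not compose cleanly, i.e.\ $\epsilon_\omega(K,V)\circ\epsilon_\omega(K,V)\not\subseteq\epsilon_\omega(K,W)$ in general, so submultiplicativity of $\operatorname{npat}_\omega$ at a fixed scale fails. The remedy is to enlarge the windows of the factors, for which $\epsilon_\omega(K+\overline{V'},V')\circ\epsilon_\omega(K+\overline{V'},V')\subseteq\epsilon_\omega(K,2V')$ does hold, and then to check that the resulting window enlargements, the scale doublings $V'\mapsto 2V'$, and the boundary contribution of $R_i$ all become negligible once one divides by $\theta(A_i)$ and passes to $\sup_V$. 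Verifying that none of these corrections inflate the exponential growth rate is the delicate part; the remainder is bookkeeping of monotonicity, invariance, and the Van Hove density ratios furnished by Propositions~\ref{pro:VanHovenets-K} and \ref{pro:VanHovenets+K}.
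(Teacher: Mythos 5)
Your reduction to Proposition \ref{pro:ENTROPY_SPA_SEP_COV_D} via Lemma \ref{lem:epsilonentourageandBowen}, and the entire upper bound $\sup_V\limsup_i\log\operatorname{npat}_\omega(A_i,V)/\theta(A_i)\leq\operatorname{E}(\pi_\omega)$, are correct and essentially what the paper does (the Ornstein--Weiss detour through $\delta_V$ is sound but not needed: applying Proposition \ref{pro:ENTROPY_SPA_SEP_COV_D} to the Van Hove net $(-A_i)_{i\in I}$ with $\eta=\epsilon_\omega(\{0\},V)$, so that $\eta_{(-A_i)}=\epsilon_\omega(A_i,V)$, already gives it). The gap is in the lower bound, precisely at the ``window-thinning'' step you yourself flag as delicate and do not carry out. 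The step does not close as described: the corrected submultiplicativity you propose has the form $\operatorname{npat}_\omega(A\cup R,V'+V')\leq\operatorname{npat}_\omega(A+\overline{V'},V')\cdot\operatorname{npat}_\omega(R+\overline{V'},V')$, so splitting $K'+A_i$ into $A_i$ and the boundary piece $R_i$ leaves you with the factor $\operatorname{npat}_\omega(A_i+\overline{V'},V')$ rather than $\operatorname{npat}_\omega(A_i,V'')$ --- the window has been re-enlarged by exactly the kind of compact set you were trying to remove, and iterating the decomposition never terminates at $A_i$. Relating $\operatorname{npat}_\omega$ on an enlarged window to $\operatorname{npat}_\omega$ on $A_i$ itself is the whole content of the inequality you are trying to prove, so as written the argument is circular. (The linear bound $\log\operatorname{cov}_{X_\omega}(\epsilon_\omega(B,V'))\leq c_{V'}\theta(B+U)+d_{V'}$ for the boundary factor is fine, but it does not rescue the main factor.)

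The paper avoids all of this by moving in the opposite direction: instead of enlarging $A_i$ to $K'+A_i$ and then trying to thin, it uses the second statement of Proposition \ref{pro:VanHovenets-K} to produce an \emph{inner} Van Hove net $(B_i)_{i\in I}$ with $B_i-K'\subseteq -A_i$ (equivalently $K'-B_i\subseteq A_i$) and $\theta(B_i)/\theta(A_i)\to 1$. Then Lemma \ref{lem:SpaSepCovStaticRelation}(ii) and Lemma \ref{lem:epsilonentourageandBowen} give $\operatorname{cov}_{X_\omega}(\eta_{B_i})\leq\operatorname{spa}_{D_\omega}(\epsilon_\omega(K',V')_{B_i})=\operatorname{npat}_\omega(K'-B_i,V')\leq\operatorname{npat}_\omega(A_i,V')$, where the last step is plain monotonicity of $\operatorname{npat}_\omega$ in the window; dividing by $\theta(B_i)$ and using the density ratio yields $\operatorname{E}(\eta|\pi_\omega)\leq\liminf_i\log\operatorname{npat}_\omega(A_i,V')/\theta(A_i)$, and the supremum over $\eta$ finishes. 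You cite Proposition \ref{pro:VanHovenets-K} only for the ratio $\theta(M+A_i)/\theta(A_i)\to1$; its inner-approximation half is the tool that makes the submultiplicativity machinery, the scale doublings, and the boundary bookkeeping unnecessary. I would replace the third paragraph of your argument with this device; the rest stands.
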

\begin{proof}
Let $V$ be an open neighbourhood of $0$ and set $\epsilon:=\epsilon_\omega(\{0\},V)$. Then $\operatorname{npat}_\omega(A_i,V)=\operatorname{spa}_{D_\omega}(\epsilon_\omega(A_i,V))=\operatorname{spa}_{D_\omega}(\epsilon_{(-A_i)})$ and we obtain from Proposition \ref{pro:ENTROPY_SPA_SEP_COV_D} that \[\operatorname{E}(\pi_\omega)\geq \limsup_{i\in I}\frac{\log (\operatorname{spa}_{D_\omega}(\epsilon_{(-A_i)}))}{\theta(A_i)}=\limsup_{i\in I}\frac{\log(\operatorname{npat}_\omega(A_i,V))}{\theta(A_i)}.\]
Thus $\operatorname{E}(\pi_\omega)\geq\sup_{W\in \mathcal{N}(G)} \limsup_{i\in I}{\log(\operatorname{npat}_\omega(A_i,W))}/{\theta(A_i)}.$

	Let now $\eta\in \mathbb{U}_X$ and choose $\kappa\in \mathbb{U}_X$ as in Lemma \ref{lem:SpaSepCovStaticRelation}(ii). There are $K\subseteq G$ compact and an open neighbourhood $V$ of $0$ such that $\epsilon_\omega(K,V)\subseteq \kappa$. 
	By Lemma \ref{pro:VanHovenets-K} there exists a Van Hove net $(B_i)_{i\in I}$ such that $\lim_{i\in I}\theta(B_i)/\theta(A_i)=1$ and such that $B_i-K\subseteq -A_i$ for all $i\in I$. Thus we obtain from Lemma \ref{lem:epsilonentourageandBowen} that $\operatorname{cov}_{X_\omega}(\eta_{B_i})\leq \operatorname{spa}_{D_\omega}(\epsilon_\omega(K,V)_{B_i}) =\operatorname{npat}_\omega(K-B_i,V)\leq \operatorname{npat}_\omega(A_i,V)$. Hence
\begin{align*}
	\operatorname{E}(\eta|\pi_\omega)&=\lim_{i\in I}\frac{\log(\operatorname{cov}_{X_\omega}(\eta_{B_i}))}{\theta(B_i)} \\
	&\leq \liminf_{i\in I}\frac{\log(\operatorname{npat}_\omega(A_i,V))}{\theta(A_i)}\\
	&\leq\sup_{W\in \mathcal{N}(G)}\liminf_{i\in I}\frac{\log(\operatorname{npat}_\omega(A_i,W))}{\theta(A_i)}.
\end{align*}
	Taking the supremum over all $\eta\in \mathbb{U}_X$ we obtain 
\[\operatorname{E}(\pi_\omega)\leq\sup_{W\in \mathcal{N}(G)} \liminf_{i\in I} \frac{\log(\operatorname{npat}_\omega(A_i,W))}{\theta(A_i)}\leq\sup_{W\in \mathcal{N}(G)} \limsup_{i\in I} \frac{\log(\operatorname{npat}_\omega(A_i,W))}{\theta(A_i)}\]
and the statement follows. 
\end{proof}

%
%\begin{remark}
% subset $F\subseteq G$ we call an \emph{$A$-patch separation} of \emph{scale} $V$ for $\omega$, if for any $g,g'\in F$ with $g\neq g'$ we have that there does not hold
%\[\omega-g \overset{A,V}{\approx} \omega-g'.\]
%%i.e.\ $(\omega-g)\cap A$ is not contained in $(\omega-g')+V$ or $(\omega-g')\cap A$ is not contained in $(\omega-g)+V$. 
%
%
%
%
%$F$ is an $A$-patch separation of scale $V$, if and only if $\{\omega-f;\, f\in F\}$ is $\epsilon_\omega(A,V)$-separated.
%
% and that the cardinality of an $A$-patch separation on scale $V$ for $\omega$ is bounded. 
% 
% Furthermore we denote by 
%	$\operatorname{sep}_\omega(A,V)$
%the maximal cardinality of an $A$-patch separation of scale $V$ for $\omega$.
%
%
%$\operatorname{sep}_\omega(A,V)=\operatorname{sep}_{D_\omega}(\epsilon_\omega(A,V)).$
%
%\end{remark}

\subsection{Centred and non-centred patch counting}\label{sub:centredanduncentred}

We now establish the connection between non-centred and centred $A$-patch representations. Recall from the Introduction that for a Delone set $\omega\subseteq \mathbb{R}^d$, $A\subseteq G$ compact and an open neighbourhood $V$ of $0$ we say that $F\subseteq \omega$ is an \emph{$A$-patch representation} of \emph{scale} $V$ for $\omega$, if for any $g\in \omega$ there is $f\in F$ s.t. \[\omega-f \overset{A,V}{\approx} \omega-g.\]
	Note that an non-centred $A$-patch representation of scale $V$ is not necessarily contained in $\omega$ and thus not necessarily an $A$-patch representation of scale $V$. Nevertheless we have the following. 
	
\begin{lemma} \label{lem:boundonApatchrepresentations}
	Let $V\subseteq G$ be a pre-compact, symmetric and open neighbourhood of $0$ and $A\subseteq G$ be compact. Then for every non-centred $\left(A+\overline{V}\right)$-patch representation $F$ of scale $V$ there exists an $A$-patch representation $E$ of scale $V+V$ such that $|E|\leq|F|$. 
\end{lemma}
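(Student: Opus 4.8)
The plan is to avoid rounding each $f\in F$ to a nearby point of $\omega$, and instead to build $E$ from representatives taken inside $\omega$. Since $F$ is a non-centred $(A+\overline V)$-patch representation of scale $V$, I may fix for every $g\in\omega$ an element $\phi(g)\in F$ with $\omega-g\overset{A+\overline V,V}{\approx}\omega-\phi(g)$. Let $F':=\phi(\omega)\subseteq F$ be its image and, for each $f\in F'$, pick some $g_f\in\phi^{-1}(f)\subseteq\omega$; set $E:=\{g_f;\, f\in F'\}$. Then $E\subseteq\omega$, and since $f\mapsto g_f$ maps $F'$ onto $E$ we get $|E|\le|F'|\le|F|$. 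The whole content is therefore to check that $E$ is an $A$-patch representation of scale $V+V$.

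The key step is a triangle-type estimate: whenever $\omega-g$ and $\omega-g'$ are both $(A+\overline V,V)$-close to the same $\omega-f$, then $\omega-g\overset{A,V+V}{\approx}\omega-g'$. Applying this with $f:=\phi(g)$ and $e:=g_f$ (so that $\phi(e)=f$, and hence $\omega-g$ and $\omega-e$ are both $(A+\overline V,V)$-close to $\omega-f$) yields exactly the required approximant $e\in E$ for a given $g\in\omega$. To verify one of the two inclusions, take $x\in(\omega-g)\cap A$; since $0\in\overline V$ we have $x\in A+\overline V$, so $x=z+v$ with $z\in\omega-f$ and $v\in V$. Symmetry of $V$ gives $z=x-v\in A+V\subseteq A+\overline V$, so $z$ lies in the enlarged window, whence $z\in(\omega-e)+V$ and therefore $x\in(\omega-e)+(V+V)$. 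The reverse inclusion $(\omega-e)\cap A\subseteq(\omega-g)+(V+V)$ follows by exchanging the roles of $g$ and $e$.

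This completes the argument up to the two symmetric set-theoretic verifications, which use only that $\overset{K,V}{\approx}$ is symmetric in its two arguments. The hypotheses enter transparently: symmetry of $V$ is precisely what lets me pull the $V$-error back into the window (via $-V=V$, so $A-V=A+V$), while pre-compactness of $V$ ensures that $\overline V$, and thus $A+\overline V$, is compact, so that ``non-centred $(A+\overline V)$-patch representation'' is a meaningful notion and the enlargement of the window is harmless.

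I expect the genuine obstacle to be conceptual, not computational. The natural first attempt is to replace each used $f$ by a point of $\omega$ within $V$ of it, arguing that $g\in\omega$ forces $0\in\omega-g$ and hence a partner of $0$ in $(\omega-f)+V$. That argument requires $0\in A+\overline V$ and breaks down when $A$ lies far from the origin, where $f$ may genuinely sit in a hole of $\omega$ with no admissible centre nearby. Choosing representatives $g_f\in\omega$ sidesteps this completely: the centres are taken from $\omega$ by construction, and the enlarged window $A+\overline V$ is needed only to absorb the pulled-back error, never to relocate a centre.
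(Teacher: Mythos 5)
Your proof is correct and follows essentially the same route as the paper: you select one representative $g_f\in\omega$ from each ``class'' of points represented by a given $f\in F$ and then run a triangle argument through $\omega-f$, using the enlarged window $A+\overline{V}$ together with the symmetry of $V$ to absorb the $V$-error and land at scale $V+V$. The only cosmetic difference is that you trace individual points $x=z+v$ while the paper manipulates the inclusions $(\omega-f-v)\cap A\subseteq\omega-g_f+V+V$ set-wise; the content is identical.
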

\begin{proof}
	For $f\in F$ set $[f]:=\{g\in \omega;\, (\omega-g,\omega-f)\in \epsilon_\omega\left(A+\overline{V},V\right)\}$. For $f\in F$ choose $g_f\in [f]$, whenever $[f]\neq\emptyset$. Otherwise choose $g_f\in G$ arbitrary. Set $E:=\{g_f;\, f\in F\}$. As $F$ is a non- centred $A$-patch representation we obtain that $\bigcup_{f\in F}[f]=\omega$. Thus for $g\in \omega$ there is $f\in F$ with $g\in [f]$ and in particular $[f]\neq \emptyset$. As $g_f\in [f]$ we obtain $(\omega-f)\cap \left(A+\overline{V}\right)\subseteq \omega-g_f+V$. Thus for any $v\in V$ we have $(\omega-f)\cap(A+v)\subseteq \omega-g_f+V$, i.e.
	\[(\omega-f-v)\cap A\subseteq \omega-g_f+V-v\subseteq \omega-g_f+V+V.\]
	 As $V=-V$ we get 
	\[(\omega-f+V)\cap A=\bigcup_{v\in V}(\omega-f-v)\cap A\subseteq \omega-g_f+V+V.\]
	We thus obtain from $g\in [f]$ that
	\[(\omega-g)\cap A\subseteq (\omega-f+V)\cap A\subseteq \omega-g_f+V+V.\]
	Similarly one shows $(\omega-g_f)\cap A\subseteq \omega-g+V+V$. This proves 
	\[\omega-g \overset{A,V+V}{\approx} \omega-g_f\]
	and we obtain $E$ to be an $A$-patch representation of scale $V+V$.
\end{proof}

\begin{remark}\label{rem:repandspafinite}
	For all compact subsets $A\subseteq G$ and all open neighbourhoods $V$ of $0$ there exists a finite $A$-patch representation of scale $V$. Indeed, there is a pre-compact and symmetric open neighbourhood $W$ of $0$ such that $W+W\subseteq V$ and we obtain from Lemma \ref{lem:boundonApatchrepresentations} the existence of a finite $A$-patch representation of scale $W+W$, which is also a finite $A$-patch representation of scale $V$.  
\end{remark}

	We can thus define $\operatorname{pat}_\omega(A,V)$ as the minimal cardinality of an $A$-patch representation of scale $V$ for $\omega$. 

\begin{lemma} \label{lem:centeredornotApatches}
	Let $\omega\subseteq G$ be a Delone set and $K\subseteq G$ compact such that $\omega$ is $K$-dense. Let furthermore $V\subseteq G$ be a pre-compact, symmetric and open neighbourhood of $0$. Then there is a constant $N\in \mathbb{N}$ such that for every compact $A\subseteq G$ we have 
	$\operatorname{pat}_\omega(A,V+V)\leq \operatorname{npat}_\omega\left(A+\overline{V},V\right)$
	and	
	$\operatorname{npat}_\omega(A,V+V)\leq N \operatorname{pat}_\omega(A+K,V).$
	\end{lemma}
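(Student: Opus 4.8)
The first inequality is an immediate consequence of Lemma~\ref{lem:boundonApatchrepresentations}. The plan is to take a non-centred $(A+\overline{V})$-patch representation $F$ of scale $V$ of minimal cardinality, so that $|F|=\operatorname{npat}_\omega(A+\overline{V},V)$, and to apply Lemma~\ref{lem:boundonApatchrepresentations} (whose hypotheses on $V$ and $A$ are exactly those assumed here) to obtain an $A$-patch representation $E$ of scale $V+V$ with $|E|\leq|F|$. Then $\operatorname{pat}_\omega(A,V+V)\leq|E|\leq|F|=\operatorname{npat}_\omega(A+\overline{V},V)$, and this bound holds for every compact $A$.

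For the second inequality the idea is to convert a centred $(A+K)$-patch representation into a non-centred $A$-patch representation by exploiting $K$-density. First I would fix, independently of $A$, finitely many points $k_1,\dots,k_N\in K$ with $K\subseteq\bigcup_{j=1}^{N}(k_j+V)$; such a finite cover exists because $\{k+V;\,k\in K\}$ is an open cover of the compact set $K$, and the number $N$ depends only on $K$ and $V$. This $N$ is the constant claimed in the statement. Given a minimal $(A+K)$-patch representation $E\subseteq\omega$ of scale $V$, so that $|E|=\operatorname{pat}_\omega(A+K,V)$, I would set $F:=\{e+k_j;\,e\in E,\ 1\leq j\leq N\}$, which satisfies $|F|\leq N\operatorname{pat}_\omega(A+K,V)$.

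It then remains to verify that $F$ is a non-centred $A$-patch representation of scale $V+V$. Given $h\in G$, $K$-density provides $g\in\omega$ and $k\in K$ with $h=g+k$; I would choose $j$ with $k-k_j\in V$ and $e\in E$ with $\omega-e\overset{A+K,V}{\approx}\omega-g$, and take $f:=e+k_j\in F$. Checking $\omega-h\overset{A,V+V}{\approx}\omega-f$ is the technical heart: for $x\in(\omega-h)\cap A$ one uses $\omega-h=(\omega-g)-k$ to see that $x+k\in(\omega-g)\cap(A+k)\subseteq(\omega-g)\cap(A+K)$, applies the $\approx$-relation between $\omega-e$ and $\omega-g$, and finally rewrites the translation $k$ as $k_j$ plus the discrepancy $k-k_j\in V$, so that $\omega-f$ appears up to an error in $V+V$; the reverse inclusion is symmetric. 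The main obstacle is precisely this bookkeeping: one must ensure that a single cover $\{k_j+V\}$ controls the translation error uniformly in $A$ and that the two errors accumulate to no more than $V+V$. This is where the symmetry $V=-V$ and the choice $k_j\in K$ (which guarantees $A+k_j\subseteq A+K$, so that $E$ may be invoked) are both essential.
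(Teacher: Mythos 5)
Your proposal is correct and follows essentially the same route as the paper: both inequalities are obtained exactly as you describe, the first by feeding a minimal non-centred $(A+\overline{V})$-representation into Lemma \ref{lem:boundonApatchrepresentations}, and the second by fixing a finite set $F_{K,V}\subseteq K$ with $K\subseteq F_{K,V}+V$ (your $k_1,\dots,k_N$), translating a minimal centred $(A+K)$-representation by these points, and letting the two $V$-errors accumulate to $V+V$ via the symmetry of $V$. The bookkeeping you outline matches the paper's computation, so nothing is missing.
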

	
	\begin{proof} 
	As $K$ is compact, there is a finite set $F_{K,V}\subseteq K$ such that $K\subseteq F_{K,V}+V$. Set $N:=|F_{K,V}|$. The first inequality easily follows from Lemma \ref{lem:boundonApatchrepresentations}. 
	To show the second inequality let $F$ be an $(A+K)$-patch representation of scale $V$ of minimal cardinality. To show that $F+F_{K,V}$ is a non-centred  $A$-patch representation of scale $V+V$ let $g\in G$. 
	As $K+\omega=G$, there are $e\in F_{K,V}$, $v\in V$ and $u\in \omega$ with $e+v\in K$ and $e+v+u=g$. 
	%In order to show $F+F_{K,V}$ to be a non-centred $A$-patch representation of scale $V+V$ it is sufficient to show that there is $f\in F$ with
	% \[\omega-g \overset{A,V+V}{\approx} \omega-(f+e).\]	 
Since $F$ is an $A+K$-patch representation of scale $V$ there is $f\in F$ with 
	 \[\omega-f\overset{A+K,V}{\approx}\omega-u.\] 
	From $e\in F_{K,V}\subseteq K$ we get 
	$(\omega-f)\cap (A+e)\subseteq (\omega-u)+V.$
	We thus compute
	\begin{align*}
		( \omega-(f+e) ) \cap A&= ((\omega-f)\cap (A+e))-e\\
		&\subseteq (\omega-u) +V -e\\
		&=\omega -g+v+V\\
		&\subseteq (\omega -g) +(V+V).
	\end{align*}	
	As $e+v\in K$ we obtain that 
	$(\omega-u)\cap (A+e+v)\subseteq (\omega-f)+V.$
	Thus $V=-V$ implies
	\begin{align*}
		(\omega-g)\cap A &=(\omega-e-v-u)\cap A\\
			&=((\omega-u)\cap(A+e+v))-e-v\\
			&\subseteq (\omega-f)+V-e-v\\
			&\subseteq (\omega-(f+e))+(V+V).
	\end{align*}
	Hence $\omega-g \overset{A,V+V}{\approx} \omega-(f+e)$. As $f+e\in F+F_{K,V}$ we have shown that $F+F_{K,V}$ to be a non-centred  $A$-patch representation of scale $V+V$. We thus obtain 
	$\operatorname{npat}_\omega(A,V+V)\leq |F+F_{K,V}|\leq N|F|=N\operatorname{pat}_\omega(A+K,V).$
	\end{proof}

\begin{theorem}\label{the:Etop=Epat}
	Assume that $G$ is a non-compact LCA group that contains Meyer sets. For every Delone set $\omega\subseteq G$ and every Van Hove net $(A_i)_{i\in I}$ we have  
	\[\operatorname{E}(\pi_\omega)=\sup_{V\in \mathcal{N}(G)}\liminf_{i\in I}\frac{\log(\operatorname{pat}_\omega(A_i,V))}{\theta(A_i)}=\sup_{V\in \mathcal{N}(G)}\limsup_{i\in I}\frac{\log(\operatorname{pat}_\omega(A_i,V))}{\theta(A_i)}.\] 
\end{theorem}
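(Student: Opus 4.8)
The plan is to sandwich the quantity $\sup_{V}\limsup_i \log(\operatorname{pat}_\omega(A_i,V))/\theta(A_i)$ between $\operatorname{E}(\pi_\omega)$ from both sides, using Proposition \ref{pro:Enpat=Etop} to rewrite the topological entropy in terms of $\operatorname{npat}_\omega$ and then translating back and forth between $\operatorname{npat}_\omega$ and $\operatorname{pat}_\omega$ via the two inequalities of Lemma \ref{lem:centeredornotApatches}. Throughout I would fix a compact set $K$ witnessing the relative denseness of $\omega$ and exploit Proposition \ref{pro:VanHovenets-K} (together with Proposition \ref{pro:VanHovenets+K}) to absorb the various perturbations $A_i \mapsto A_i + K$, $A_i \mapsto A_i + \overline{V}$, and $A_i \mapsto B_i$ into the Van Hove net without changing the limit, since all these operations preserve the Van Hove property and satisfy $\theta(\cdot)/\theta(A_i)\to 1$.

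First I would prove the inequality $\sup_{V}\limsup_i \log(\operatorname{pat}_\omega(A_i,V))/\theta(A_i) \leq \operatorname{E}(\pi_\omega)$. Fix a pre-compact, symmetric, open neighbourhood $V$ of $0$. By the first inequality of Lemma \ref{lem:centeredornotApatches} we have $\operatorname{pat}_\omega(A_i,V+V)\leq \operatorname{npat}_\omega(A_i+\overline{V},V)$. Since $(A_i+\overline{V})_{i\in I}$ is again a Van Hove net with $\theta(A_i+\overline{V})/\theta(A_i)\to 1$ by Proposition \ref{pro:VanHovenets-K}, the $\limsup$ of the right-hand side (normalised by $\theta(A_i)$) is bounded by $\sup_{W}\limsup_i \log(\operatorname{npat}_\omega(A_i+\overline V,W))/\theta(A_i+\overline V)$, which equals $\operatorname{E}(\pi_\omega)$ by Proposition \ref{pro:Enpat=Etop}. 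Taking the supremum over $V$ (it suffices to range over pre-compact symmetric $V$, since these are cofinal in $\mathcal{N}(G)$ and $V \mapsto V+V$ is cofinal as well) gives the desired bound.

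Next I would prove the reverse inequality $\operatorname{E}(\pi_\omega) \leq \sup_{V}\liminf_i \log(\operatorname{pat}_\omega(A_i,V))/\theta(A_i)$, which also upgrades the $\limsup$ to a $\liminf$ so that the chain of inequalities forces equality of the two outer expressions. For a fixed pre-compact symmetric $V$, the second inequality of Lemma \ref{lem:centeredornotApatches} gives $\operatorname{npat}_\omega(A_i,V+V)\leq N\,\operatorname{pat}_\omega(A_i+K,V)$ for a constant $N=N(K,V)$ independent of $i$. Taking logarithms, the additive $\log N$ term vanishes after dividing by $\theta(A_i)\to\infty$ (this uses the lemma stating $\theta(A_i)\to\infty$ in a non-compact $G$), so $\liminf_i \log(\operatorname{npat}_\omega(A_i,V+V))/\theta(A_i) \leq \liminf_i \log(\operatorname{pat}_\omega(A_i+K,V))/\theta(A_i)$. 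Since $(A_i+K)_{i\in I}$ is a Van Hove net with $\theta(A_i+K)/\theta(A_i)\to 1$, the right-hand side equals $\liminf_i \log(\operatorname{pat}_\omega(A_i+K,V))/\theta(A_i+K)$, and taking suprema over $V$ on both sides together with Proposition \ref{pro:Enpat=Etop} (in its $\liminf$ form) yields $\operatorname{E}(\pi_\omega)\leq \sup_V \liminf_i \log(\operatorname{pat}_\omega(A_i,V))/\theta(A_i)$.

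Combining the two displays gives the sandwich $\operatorname{E}(\pi_\omega)\leq \sup_V\liminf_i(\cdots)\leq \sup_V\limsup_i(\cdots)\leq \operatorname{E}(\pi_\omega)$, and all four quantities coincide, which is exactly the assertion. The main obstacle I anticipate is bookkeeping rather than conceptual: one must be careful that the constant $N$ in Lemma \ref{lem:centeredornotApatches} genuinely depends only on $K$ and $V$ (not on $A_i$) so that it disappears under normalisation, and that every replacement of the net ($A_i\mapsto A_i+\overline V$, $A_i\mapsto A_i+K$, and any passage to a shrunken net $B_i$) preserves the Van Hove property and the ratio limit $1$, so that Proposition \ref{pro:Enpat=Etop} may be legitimately invoked for the perturbed net. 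The cofinality remark — that it suffices to take the supremum over pre-compact symmetric $V$ and that $V\mapsto V+V$ does not change the supremum — should be stated explicitly to justify replacing $V$ by $V+V$ throughout.
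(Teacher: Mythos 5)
Your overall strategy --- sandwiching the $\operatorname{pat}_\omega$ quantities between $\operatorname{E}(\pi_\omega)$ via Proposition \ref{pro:Enpat=Etop} and the two inequalities of Lemma \ref{lem:centeredornotApatches} --- is exactly the paper's, and your first direction (bounding $\sup_V\limsup_i\log(\operatorname{pat}_\omega(A_i,V))/\theta(A_i)$ by $\operatorname{E}(\pi_\omega)$ through $\operatorname{pat}_\omega(A_i,V+V)\leq\operatorname{npat}_\omega(A_i+\overline V,V)$, together with the cofinality of the sets $V+V$) is correct. The gap is in the final step of the second direction. From $\operatorname{npat}_\omega(A_i,V+V)\leq N\,\operatorname{pat}_\omega(A_i+K,V)$ you correctly arrive at $\operatorname{E}(\pi_\omega)\leq\sup_V\liminf_i\log(\operatorname{pat}_\omega(A_i+K,V))/\theta(A_i+K)$, but you then silently replace $A_i+K$ by $A_i$. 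This is not justified: $\operatorname{pat}_\omega(\cdot,V)$ is monotone increasing in the window, so $\operatorname{pat}_\omega(A_i,V)\leq\operatorname{pat}_\omega(A_i+K,V)$, and your inequality bounds $\operatorname{E}(\pi_\omega)$ by the \emph{larger} quantity. What you have actually proved is the lower bound for the Van Hove net $(A_i+K)_{i\in I}$ rather than for $(A_i)_{i\in I}$, and since the independence of the choice of Van Hove net is precisely part of what is being proved, you cannot pass between the two for free.

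The repair is the one the paper uses, and which you in fact list among your bookkeeping concerns ("passage to a shrunken net $B_i$") without deploying it: invoke the second part of Proposition \ref{pro:VanHovenets-K} to obtain a Van Hove net $(B_i)_{i\in I}$ with $B_i+K\subseteq A_i$ and $\theta(B_i)/\theta(A_i)\to 1$, and apply Lemma \ref{lem:centeredornotApatches} to $B_i$ instead of $A_i$. This gives $\operatorname{npat}_\omega(B_i,V+V)\leq N\,\operatorname{pat}_\omega(B_i+K,V)\leq N\,\operatorname{pat}_\omega(A_i,V)$, where the last step is monotonicity in the window and now points in the right direction. Running your normalisation argument (with $\log N$ absorbed because $\theta(A_i)\to\infty$ in a non-compact group) and applying Proposition \ref{pro:Enpat=Etop} in its $\liminf$ form to the net $(B_i)_{i\in I}$ then yields $\operatorname{E}(\pi_\omega)\leq\sup_V\liminf_i\log(\operatorname{pat}_\omega(A_i,V))/\theta(A_i)$, and the chain closes exactly as you describe.
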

\begin{proof}
	Let $K\subseteq G$ be compact such that $\omega$ is $K$ discrete and choose a Van Hove net $(B_i)_{i\in I}$ such that $\lim_{i\in I}\theta(B_i)/\theta(A_i)=1$ and such that $B_i+K\subseteq A_i$ for all $i\in I$. 
	Let $V\subseteq G$ be an open neighbourhood of $0$. Then there exists a pre-compact and open neighbourhood $W$ of $0$ such that $W+W\subseteq V$. By Lemma \ref{lem:centeredornotApatches} there is $N\in \mathbb{N}$ such that for all $i\in I$ we have
	$\operatorname{npat}_\omega(B_i,V)\leq \operatorname{npat}(B_i,W+W)\leq N\operatorname{pat}_\omega(B_i+K,W)\leq N \operatorname{pat}_\omega(A_i,W)$ and we obtain from Proposition \ref{pro:Enpat=Etop} that 
	\begin{align*}
	\operatorname{E}(\pi_\omega)
		&=\liminf_{i\in I}\frac{\log(\operatorname{npat}_\omega(B_i,V))}{\theta(B_i)}
		\leq \liminf_{i\in I}\frac{\log(N)+\log(\operatorname{pat}_\omega(A_i,W))}{\theta(A_i)}\\
		&=\liminf_{i\in I} \frac{\log(\operatorname{pat}_\omega(A_i,W))}{\theta(A_i)}
		\leq \sup_{U\in \mathcal{N}(G)}\liminf_{i\in I} \frac{\log(\operatorname{pat}_\omega(A_i,U))}{\theta(A_i)}.
	\end{align*}
The proof of the reverse inequality, with a limit superior instead of a limit inferior, is similar: use the other statement from Lemma \ref{lem:centeredornotApatches}. 
\end{proof}

%\begin{remark}
%	In particular \[\operatorname{E}_{pat}^\mathcal{A}(\omega)=\sup_{V\in \mathcal{N}(G)}\limsup_{i\in I}\frac{\log(\operatorname{pat}_\omega(A_i,V))}{\theta(A_i)}\] is independent of the choice of a Van Hove net $\mathcal{A}=(A_i)_{i\in I}$ for non-compact LCA groups that contain Meyer sets. We define $ \operatorname{E}_{pat}(\omega):=\operatorname{E}_{pat}^\mathcal{A}(\omega)$ for some Van Hove net $\mathcal{A}$ and any Delone set $\omega\subseteq G$.
%\end{remark}

\section{Patch counting for FLC Delone sets}\label{sec:FLC} 

In this section we study the classical definition of patch counting entropy presented in the Introduction. 
Therefore we assume $G$ to be a non-compact LCA group that contains Meyer sets and $\omega\subseteq G$ to be a FLC Delone set. For $A\subseteq G$ compact we call a subset $F\subseteq \omega$ an \emph{exact $A$-patch representation}, if for all $g\in \omega$ there is $f\in F$ such that $(\omega-g)\cap A=(\omega-f)\cap A$. The minimal cardinality of an exact $A$-patch representation is $|\operatorname{Pat}_\omega(A)|$. For every  open neighbourhood $V$ of $0$ we obtain that every exact $A$-patch representation is an $A$-patch representation of scale $V$. Thus for every compact $A\subseteq G$ we have
$\operatorname{pat}_\omega(A,V)\leq |\operatorname{Pat}_\omega(A)|$ and we obtain from Theorem \ref{the:Etop=Epat} the following. 
\begin{proposition}\label{pro:EpatleqEPAT}
	For every Van Hove net $(A_i)_{i\in I}$ we have
	\[\operatorname{E}(\pi_\omega)\leq \liminf_{i\in I}\frac{\log(|\operatorname{Pat}_\omega(A_i)|)}{\theta(A_i)}\leq\limsup_{i\in I}\frac{\log(|\operatorname{Pat}_\omega(A_i)|)}{\theta(A_i)}.\]
\end{proposition}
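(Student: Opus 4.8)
The plan is to establish \textbf{Proposition \ref{pro:EpatleqEPAT}} as a direct consequence of \textbf{Theorem \ref{the:Etop=Epat}} together with the elementary comparison $\operatorname{pat}_\omega(A,V)\leq |\operatorname{Pat}_\omega(A)|$ already derived in the preceding paragraph. The right-hand inequality $\liminf \leq \limsup$ is automatic, so the only content is the left-hand inequality $\operatorname{E}(\pi_\omega)\leq \liminf_{i\in I}\log(|\operatorname{Pat}_\omega(A_i)|)/\theta(A_i)$.

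First I would fix an arbitrary open neighbourhood $V\in \mathcal{N}(G)$ of $0$. The key observation, recorded just before the statement, is that every exact $A$-patch representation is in particular an $A$-patch representation of scale $V$, since the condition $(\omega-g)\cap A=(\omega-f)\cap A$ is strictly stronger than $\omega-g \overset{A,V}{\approx}\omega-f$. Taking a minimal exact $A$-patch representation, whose cardinality is $|\operatorname{Pat}_\omega(A)|$, we obtain an $A$-patch representation of scale $V$ of the same size, and hence $\operatorname{pat}_\omega(A,V)\leq |\operatorname{Pat}_\omega(A)|$ for every compact $A\subseteq G$. Applying this with $A=A_i$ and dividing by $\theta(A_i)>0$ (valid for large $i$, as noted in the definition of Van Hove nets), I would pass to the limit inferior to get
\[
\liminf_{i\in I}\frac{\log(\operatorname{pat}_\omega(A_i,V))}{\theta(A_i)}\leq \liminf_{i\in I}\frac{\log(|\operatorname{Pat}_\omega(A_i)|)}{\theta(A_i)},
\]
using monotonicity of $\log$ and of the $\liminf$ operation under pointwise inequalities.

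Next I would take the supremum over all $V\in \mathcal{N}(G)$ on the left. The right-hand side does not depend on $V$, so the inequality is preserved:
\[
\sup_{V\in \mathcal{N}(G)}\liminf_{i\in I}\frac{\log(\operatorname{pat}_\omega(A_i,V))}{\theta(A_i)}\leq \liminf_{i\in I}\frac{\log(|\operatorname{Pat}_\omega(A_i)|)}{\theta(A_i)}.
\]
By \textbf{Theorem \ref{the:Etop=Epat}}, the left-hand supremum equals exactly $\operatorname{E}(\pi_\omega)$ (that theorem identifies $\operatorname{E}(\pi_\omega)$ with this $\sup$–$\liminf$ expression for any Van Hove net), which yields $\operatorname{E}(\pi_\omega)\leq \liminf_{i\in I}\log(|\operatorname{Pat}_\omega(A_i)|)/\theta(A_i)$ as desired.

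I do not anticipate a genuine obstacle here, since the heavy lifting is entirely contained in \textbf{Theorem \ref{the:Etop=Epat}}; this proposition is a packaging step. The only point demanding a little care is the order of quantifiers: the comparison $\operatorname{pat}_\omega(A_i,V)\leq |\operatorname{Pat}_\omega(A_i)|$ must be established for each fixed $V$ \emph{before} taking the supremum over $V$, because $|\operatorname{Pat}_\omega(A_i)|$ is a single $V$-independent bound that dominates every member of the family. Since the bound is uniform in $V$, commuting the supremum past the inequality is legitimate, and no uniformity or exchange-of-limits issue arises.
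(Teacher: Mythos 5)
Your argument is correct and is essentially identical to the paper's: the paper likewise derives the proposition from the comparison $\operatorname{pat}_\omega(A,V)\leq |\operatorname{Pat}_\omega(A)|$ (every exact $A$-patch representation being an $A$-patch representation of any scale $V$) followed by an application of Theorem \ref{the:Etop=Epat}. Your added remarks about the order of quantifiers and the $V$-uniformity of the bound are accurate but not needed beyond what the paper already records.
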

	
To obtain an equality in the previous proposition we have to restrict to Van Hove nets that are compactly connected to $0$. 
	Let $C\subseteq G$ be a compact subset. We say that $A\subseteq G$ is $C-$connected to $0$, if for all $a\in A$ there are $a_0,\cdots,a_n\in A\cup\{0\}$  with $a_0=0$, $a_n=a$ and $a_i-a_{i-1}\in C$ for every $i\in \{1,\cdots,n\}$. Furthermore we say that a net of compact sets $(A_i)_{i\in I}$ is \emph{$C$-connected to $0$}, if $A_i$ is $C$-connected to $0$ for all $i\in I$. A net is called \emph{compactly connected to $0$}, if it is $C$-connected to $0$ for some compact set $C\subseteq G$. Examples of $C$-connected sets in $\mathbb{R}^d$ can be found in the Introduction. %Furthermore $(\{0,\cdots,n\})_{N\in \mathbb{N}}$ is finally $\{0,1\}$-connected to $0$ in $\mathbb{Z}$.
	%In the proof of the result from \cite{baake2007pure} a metric is used which corresponds to the local matching topology. Similalry 
	We will use the local matching base $\mathbb{B}_{\text{lm}}(\omega)$ of $\mathbb{U}_{X_\omega}$ in order to establish the equality in Proposition \ref{pro:EpatleqEPAT} for Van Hove nets that are compactly connected to $0$. Unfortunately the formula in Lemma \ref{lem:epsilonentourageandBowen}, which gives the tool to calculate the Bowen entourages of members of the local rubber base $\mathbb{B}_{lr}(\omega)$ does not necessarily hold for members of the local matching base $\mathbb{B}_{lm}(\omega)$. Nevertheless, it is straigh forward to show the following. 
	
	\begin{lemma} \label{lem:Bowenentourageforlmbase}
	For all compact $K\subseteq G$ and every open neighbourhood $V$ of $0$, and $g\in G$ we have $\eta_\omega(K,V)_g=\eta_\omega(K-g,V).$
\end{lemma}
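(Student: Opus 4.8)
The plan is to unwind both sides directly from the definitions and reduce the claim to a single translation identity for intersections. First I would recall that the Bowen entourage for the singleton $\{g\}$ is $\eta_\omega(K,V)_g=\{(\xi,\zeta)\in X_\omega^2 : (\pi_\omega^g(\xi),\pi_\omega^g(\zeta))\in\eta_\omega(K,V)\}$, and that $\pi_\omega^g(\xi)=\xi+g$ by the definition of the Delone dynamical system. (Here $\xi+g,\zeta+g\in X_\omega$ because $\pi_\omega$ is an action, so the restricted entourage $\eta_\omega(K,V)$ indeed applies to these translates.) Expanding the membership condition against the definition of $\eta_\omega(K,V)$, one sees that $(\xi,\zeta)\in\eta_\omega(K,V)_g$ holds precisely when there are $x,z\in V$ with $((\xi+g)+x)\cap K=((\zeta+g)+z)\cap K$.

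The key observation is the elementary identity $(\chi+x)\cap(K-g)=\bigl((\chi+g+x)\cap K\bigr)-g$, valid for every subset $\chi\subseteq G$; this holds because $a\in(\chi+x)\cap(K-g)$ if and only if $a+g\in(\chi+g+x)\cap K$, i.e.\ translation by $g$ is a bijection of $G$ carrying one intersection bijectively onto the other. Applying this with $\chi=\xi$ and with $\chi=\zeta$, and using that subtracting $g$ is injective, the equation $((\xi+g)+x)\cap K=((\zeta+g)+z)\cap K$ is equivalent to $(\xi+x)\cap(K-g)=(\zeta+z)\cap(K-g)$.

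Since the same pair $x,z\in V$ serves both equations, this shows that $(\xi,\zeta)\in\eta_\omega(K,V)_g$ if and only if there exist $x,z\in V$ with $(\xi+x)\cap(K-g)=(\zeta+z)\cap(K-g)$, which is exactly the condition $(\xi,\zeta)\in\eta_\omega(K-g,V)$. This yields the claimed equality $\eta_\omega(K,V)_g=\eta_\omega(K-g,V)$. I do not expect a genuine obstacle here; the only care needed is the bookkeeping of the translation identity and the remark that the scale $V$ is unaffected, precisely because translating the arguments by $g$ commutes with adding the error terms $x,z$. This is why the argument works for a single group element $g$ and does not immediately extend to an arbitrary compact $A$ in place of $g$, which is the point emphasised in the surrounding discussion about the local matching base.
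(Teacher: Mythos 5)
Your proof is correct and is exactly the straightforward unwinding the paper has in mind (the paper omits the proof, remarking only that the lemma is ``straightforward to show''): the translation identity $(\chi+x)\cap(K-g)=\bigl((\chi+g+x)\cap K\bigr)-g$ together with injectivity of translation gives the equivalence, and the same pair $x,z\in V$ witnesses both sides. Your closing remark about why the argument does not extend to a general compact $A$ (the witnesses $x,z$ would depend on $g\in A$) correctly identifies the point the paper makes when contrasting the local matching base with Lemma \ref{lem:epsilonentourageandBowen}.
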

%\begin{proof}
%	Note that $(\xi,\zeta)\in \eta_\omega(K,V)_g$, if and only if there holds $(\xi+g,\zeta+g)=(\pi^g(\xi),\pi^g(\zeta))\in \eta_\omega(K,V)$. This is equivalent to the existence of $x,z\in V$ with $(\xi+g+x)\cap K=(\zeta+g+z)\cap K$, which reformulates as $(\xi+x)\cap(K-g)=(\zeta+z)\cap(K-g)$. Such $x,z\in V$ exist, if and only if $(\xi,\zeta)\in \eta_\omega(K-g,V).$
%\end{proof}	

	Using this we can now prove the following key lemma. 

\begin{lemma}\label{lem:PatleqSEP}
Let $C$ be a symmetric and compact neighbourhood of $0$ and $V$ an open neighbourhood of $0$ contained in $C$. Assume $\omega$ to be a $C$-dense and $V$-discrete Delone set. Then there is $\eta\in \mathbb{U}_X$ such that for all compact $A\subseteq G$ that are $C$-connected to $0$ and also contain $0$ we have
\[|\operatorname{Pat}_\omega(A)|\leq \operatorname{sep}_{D_\omega}(\eta_{(-A)}).\]
\end{lemma}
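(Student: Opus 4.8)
The plan is to exhibit, for a suitable fixed entourage $\eta$ depending only on $C$ and $V$, an explicit $\eta_{(-A)}$-separated subset of $D_\omega$ whose cardinality equals $|\operatorname{Pat}_\omega(A)|$. For each patch $P\in\operatorname{Pat}_\omega(A)$ I choose a centre $g_P\in\omega$ with $(\omega-g_P)\cap A=P$ and set $S:=\{\omega-g_P;\,P\in\operatorname{Pat}_\omega(A)\}\subseteq D_\omega$. Distinct patches give distinct elements of $S$ (intersecting with $A$ recovers the patch), so $|S|=|\operatorname{Pat}_\omega(A)|$, and, crucially, every element of $S$ contains the point $0$ because $g_P\in\omega$. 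It then suffices to choose $\eta$ so that $S$ is $\eta_{(-A)}$-separated, since then $|\operatorname{Pat}_\omega(A)|=|S|\le\operatorname{sep}_{D_\omega}(\eta_{(-A)})$.

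For the entourage I would take $\eta:=\eta_\omega(C',V')$, where $C'$ is a large enough compact symmetric neighbourhood of $0$ built from finitely many copies of $C$ (for instance $C'=\overline{C+C+C}$) and $V'$ is a symmetric open neighbourhood of $0$ with $V'\subseteq V$; both depend only on $C$ and $V$, not on $A$, and $\eta\in\mathbb{U}_{X_\omega}$ by the local matching base. Using $\eta_A=\bigcap_{g\in A}(\pi_\omega^g\times\pi_\omega^g)^{-1}(\eta)$ together with Lemma \ref{lem:Bowenentourageforlmbase} I rewrite the Bowen entourage as $\eta_{(-A)}=\bigcap_{a\in A}\eta_\omega(C'+a,V')$. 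Thus $(\omega-g_P,\omega-g_{P'})\in\eta_{(-A)}$ means that for every $a\in A$ there are $x_a,z_a\in V'$ with $((\omega-g_P)+x_a)\cap(C'+a)=((\omega-g_{P'})+z_a)\cap(C'+a)$; writing $\xi:=\omega-g_P$ and $\zeta:=\omega-g_{P'}$, this says that inside the window $C'+a$ the set $\zeta$ is $\xi$ shifted by $\delta_a:=x_a-z_a\in V'-V'$.

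The heart of the argument is to show that $\eta_{(-A)}$-closeness forces $\xi\cap A=\zeta\cap A$, i.e.\ $P=P'$; separation then follows by contraposition. The key rigidity is that $\xi$ and $\zeta$ are translates of the $V$-discrete set $\omega$, so any two of their points differing by an element of $V-V\supseteq V'-V'$ must coincide. I argue along a chain. At $a=0$ both $\xi,\zeta$ contain $0$, and since $0$ stays inside the window $C'$ after the shift $x_0\in V'\subseteq C'$, one gets $\delta_0\in\zeta$ alongside $0\in\zeta$, whence $\delta_0=0$ by discreteness; translating back yields exact agreement $\xi\cap R_0=\zeta\cap R_0$ on a region $R_0$ containing a $C$-neighbourhood of $0$. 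For a general $a\in A$ pick, by $C$-connectedness, a chain $0=a_0,\dots,a_n=a$ in $A$ with $a_i-a_{i-1}\in C$, and propagate this agreement by induction on $i$: since the windows $C'+a_{i-1}$ and $C'+a_i$ overlap in a $C$-ball (here $C'$ being several copies of $C$ gives the room for the overlap to survive the $V'$-shift), and since $\xi=\zeta$ on that overlap, $C$-density of $\xi$ furnishes a common point $p\in\xi\cap\zeta$ lying robustly inside $C'+a_i$; matching at $a_i$ then places $p+\delta_{a_i}$ in $\zeta$ alongside $p\in\zeta$, forcing $\delta_{a_i}=0$ and exact agreement on a neighbourhood of $a_i$. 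At $i=n$ this gives $a\in\xi\Leftrightarrow a\in\zeta$ for every $a\in A$, hence $\xi\cap A=\zeta\cap A$.

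The main obstacle is precisely this propagation: the local matching entourage only controls $\xi$ and $\zeta$ up to an \emph{independent} $V'$-translation in each window, so one must upgrade this ``agreement up to a shift'' to genuine equality and do so with a single entourage $\eta$ that does not see $A$. The combination that makes it work is (i) the common basepoint $0$, guaranteed because patches are centred at points of $\omega$, which anchors the induction; (ii) uniform discreteness, which turns ``small shift'' into ``no shift''; and (iii) $C$-connectedness together with $C$-density, which tiles a path from $0$ to any point of $A$ by overlapping windows each containing a common Delone point, transporting the rigidity all the way. Once $\xi\cap A=\zeta\cap A$ is established, the remaining bookkeeping --- fixing the exact number of copies of $C$ in $C'$ and the inclusions $V'\subseteq V\subseteq C$ needed for the windows to overlap with room to spare --- is routine.
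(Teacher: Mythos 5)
Your proposal is correct and follows essentially the same route as the paper's proof: one representative translate per patch (each containing $0$ because its centre lies in $\omega$), a fixed entourage $\eta_\omega(K,V')$ from the local matching base with $K$ a finite sum of copies of $C$, and an induction along the $C$-chain in which $C$-density supplies common points of the two translates inside overlapping windows while $V$-discreteness forces the matching shifts to vanish, yielding $\xi\cap A=\zeta\cap A$. The only differences are cosmetic: the paper phrases separation as a contraposition on a minimal exact $A$-patch representation and fixes $K=(C+C+C)+(C+C+C)+(C+C+C)$ rather than leaving the number of copies open.
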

\begin{proof}
Set $M:=C+C+C$, $K:=M+M+M$ and $\eta:=\eta_\omega(K,V)$. 
To show the statement it is sufficient to show that every exact $A$-patch representation $F\subseteq \omega$ of minimal cardinality $|\operatorname{Pat}_\omega(A)|$ satisfies that $\{\omega-g;\, g\in F\}$ is $\eta_{(-A)}$-separated. To argue by contraposition assume $F\subseteq \omega$ to be an exact $A$-patch representation $F\subseteq \omega$ for which $\{\omega-g;\, g\in F\}$ is not $\eta_{(-A)}$-separated. Thus there are $x,y\in F$ such that $(\omega-x,\omega-y)\in \eta_{(-A)}$. We will argue below that $(\omega-x)\cap A=(\omega-y)\cap A$ and thus obtain that $F$ is not a minimal exact $A$-patch representation. 

It remains to show that $(\omega-x)\cap A=(\omega-y)\cap A$. Assume $a\in (\omega-x)\cap A$. As $A$ is $C$-connected there are $a_0,\cdots,a_n\in A$ such that $a_0=0,a_n=a$ and $a_{i+1}-a_i\in C$ for all $i=1,\cdots,n-1$. Set $x_0:=0$ and $x_n:=a$. 
As $\omega-x$ is $C$-dense there are $x_i\in \omega-x$ such that $a_i-x_i\in C$ for $i=1,\cdots, n-1$. Note that $x_0=0\in (\omega-y)\cap A$. We will show $a=x_n\in (\omega-y)$ by induction and thus obtain that $(\omega-x)\cap A\subseteq (\omega-y)\cap A$. Similarly one obtains the other inclusion. 

Assume now $x_i\in \omega-y$ for some $i=0,\cdots,n-1$. As $a_i\in A$ we obtain $(\omega-x,\omega-y)\in \eta_{-A}\subseteq \eta_{-a_i}=\eta_\omega(K+a_i, V)$. Thus there are $u,v\in V$ such that $(\omega-x+u)\cap (K+a_i)=(\omega-y+v)\cap (K+a_i)$ and we obtain $x_i+u=(x_i-a_i)+a_i+u\in C+a_i+V\subseteq K +a_i$. From $x_i+u\in (\omega-x+u)\cap (K+a_i)\subseteq \omega-y+v$ and the inductive assumption we obtain that $x_i,x_i+(u-v)\in \omega-y$. As $\omega-y$ is $V$-discrete we get $x_i=x_i+u-v$, i.e.\ $u=v$ and thus we have 
	$(\omega-x+u)\cap (K+a_i)=(\omega-y+u)\cap (K+a_i)$. 
	It follows that $(\omega-x)\cap (K+a_i+u)=(\omega-y)\cap (K+a_i+u)$. 
	From $M+x_i=M+(x_i-a_i)-u+(a_i+u)\subseteq M+C-V+(a_i+u)\subseteq K+(a_i+u)$ we obtain that $(\omega-x)\cap (M+x_i)=(\omega-y)\cap (M+x_i)$. Thus $x_{i+1}-x_i\in C+C+C=M$ implies $x_{i+1}\in (\omega-x)\cap (M+x_i)\subseteq \omega-y$. 
\end{proof}

\begin{proposition}\label{pro:EPATleqEtop}
	For all compact subsets $C\subseteq G$ there exists an entourage $\eta\in \mathbb{U}_X$ such that for every Van Hove net $(A_i)_{i\in I}$ that is $C$-connected to $0$ and satisfies $0\in A_i$ for all $i\in I$ we have
	\[\limsup_{i\in I}\frac{\log(|\operatorname{Pat}_\omega(A_i)|)}{\theta(A_i)} \leq \operatorname{E}(\eta|\pi_\omega)\leq \operatorname{E}(\pi_\omega).\]
\end{proposition}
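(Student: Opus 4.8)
The plan is to assemble the proposition from the key estimate of Lemma \ref{lem:PatleqSEP}, the static comparison $\operatorname{sep}\leq\operatorname{cov}$ of Lemma \ref{lem:SpaSepCovStaticRelation}, and the net-independence of the entropy limit provided by the Ornstein-Weiss lemma. The proposition itself is essentially bookkeeping once Lemma \ref{lem:PatleqSEP} is in hand.

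First I would reduce an arbitrary compact $C$ to the hypotheses of Lemma \ref{lem:PatleqSEP}. Choosing a compact $K$ witnessing relative density of $\omega$, I enlarge $C$ to a symmetric compact neighbourhood $C'$ of $0$ with $C\cup K\subseteq C'$; then $\omega$ is $C'$-dense, and since $C\subseteq C'$ every set that is $C$-connected to $0$ is $C'$-connected to $0$. Using uniform discreteness I pick an open neighbourhood $V$ of $0$ with $V\subseteq C'$ for which $\omega$ is $V$-discrete. Lemma \ref{lem:PatleqSEP} then produces a single entourage $\eta\in \mathbb{U}_{X_\omega}$, depending only on $C$ and $\omega$, such that $|\operatorname{Pat}_\omega(A)|\leq \operatorname{sep}_{D_\omega}(\eta_{(-A)})$ for every compact $A$ that contains $0$ and is $C'$-connected to $0$. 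This $\eta$ is the entourage claimed in the statement.

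Next, for a Van Hove net $(A_i)_{i\in I}$ that is $C$-connected to $0$ with $0\in A_i$, I apply the estimate to each $A_i$ and chain it with Lemma \ref{lem:SpaSepCovStaticRelation}(i) to obtain
\[
\log(|\operatorname{Pat}_\omega(A_i)|)\leq \log(\operatorname{sep}_{D_\omega}(\eta_{(-A_i)}))\leq \log(\operatorname{cov}_{X_\omega}(\eta_{(-A_i)}))
\]
for all $i\in I$. Dividing by $\theta(A_i)$ and taking $\limsup$ reduces everything to identifying $\limsup_{i}\log(\operatorname{cov}_{X_\omega}(\eta_{(-A_i)}))/\theta(A_i)$ with $\operatorname{E}(\eta|\pi_\omega)$. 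For this I would verify that $(-A_i)_{i\in I}$ is again Van Hove: the map $A\mapsto \theta(-A)$ is a Haar measure, hence equals $\theta$ by uniqueness up to scaling (so $\theta(-A_i)=\theta(A_i)$), and the elementary identity $\partial_K(-A)=-\partial_{-K}(A)$ gives $\theta(\partial_K(-A_i))=\theta(\partial_{-K}(A_i))$, which tends to $0$ in density because $(A_i)_{i\in I}$ is Van Hove and $-K$ is compact. Since $(-A_i)_{i\in I}$ is Van Hove, the Ornstein-Weiss lemma (Lemma \ref{lem:OrnsteinWeiss}) lets me evaluate the net-independent quantity $\operatorname{E}(\eta|\pi_\omega)$ along it, so $\lim_{i}\log(\operatorname{cov}_{X_\omega}(\eta_{(-A_i)}))/\theta(-A_i)$ exists and equals $\operatorname{E}(\eta|\pi_\omega)$; replacing $\theta(-A_i)$ by $\theta(A_i)$ converts the $\limsup$ into this limit. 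Combining the displayed chain with $\operatorname{E}(\eta|\pi_\omega)\leq \sup_{\epsilon}\operatorname{E}(\epsilon|\pi_\omega)=\operatorname{E}(\pi_\omega)$ yields the assertion.

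The genuinely substantive content — the inductive argument that upgrades $\eta_{(-A)}$-closeness to exact agreement of patches on $A$ — is already discharged in Lemma \ref{lem:PatleqSEP}. The one step requiring real care, and the likeliest place to slip, is the index mismatch: the Bowen entourages appearing here are indexed by $-A_i$ rather than $A_i$, so one must not tacitly compute $\operatorname{E}(\eta|\pi_\omega)$ along $(A_i)_{i\in I}$ itself, but instead invoke net-independence of the Ornstein-Weiss limit after checking that $(-A_i)_{i\in I}$ is Van Hove. I expect that (routine) verification of the Van Hove property for $(-A_i)_{i\in I}$ to be the main thing to get right.
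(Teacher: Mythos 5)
Your proposal is correct and follows essentially the same route as the paper: choose a symmetric compact neighbourhood of $0$ containing $C$ and a density witness $K$, pick $V$ by uniform discreteness, invoke Lemma \ref{lem:PatleqSEP} to get $\eta$ with $|\operatorname{Pat}_\omega(A_i)|\leq \operatorname{sep}_{D_\omega}(\eta_{(-A_i)})$, and chain with Lemma \ref{lem:SpaSepCovStaticRelation}(i). The verification that $(-A_i)_{i\in I}$ is Van Hove with $\theta(-A_i)=\theta(A_i)$ is exactly the ``straightforward computation'' the paper leaves implicit, and you carry it out correctly.
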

\begin{proof}
Let $K\subseteq G$ be compact such that $\omega$ is $K$-dense. Choose a compact and symmetric $\tilde{C}\subseteq G$ that contains $K\cup C$. Choose furthermore an open neighbourhood $V$ of $0$ that is contained in $\tilde{C}$ such that $\omega$ is $V$-discrete and choose $\eta$ according to Lemma \ref{lem:PatleqSEP} with respect to $\tilde{C}$ and $V$. Then by Lemma \ref{lem:SpaSepCovStaticRelation}(i) we obtain that $|\operatorname{Pat}(A_i)|\leq \operatorname{sep}_{D_\omega}(\eta_{(-A_i)})\leq \operatorname{cov}_{X_\omega}(\eta_{(-A_i)})$ and a straightforward computation shows the statement. 
\end{proof}

\begin{lemma}\label{lem:0andconnected}
%G non-compact!!!
Let $(A_i)_{i\in I}$ be a Van Hove net and set $B_i:=A_i\cup \{0\}$ for all $i\in I$. Then $(B_i)_{i\in I}$ is a Van Hove net and we have 
\[\limsup_{i\in I}\frac{\log(|\operatorname{Pat}_\omega(A_i)|)}{\theta(A_i)}=\limsup_{i\in I}\frac{\log(|\operatorname{Pat}_\omega(B_i)|)}{\theta(B_i)}.\]
\end{lemma}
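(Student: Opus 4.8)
The plan is to split the claim into two essentially independent parts: that $(B_i)_{i\in I}$ is again a Van Hove net, and that the two $\limsup$-expressions coincide. For the Van Hove property I would first control the measures. Since $A_i\subseteq B_i\subseteq A_i\cup\{0\}$ we have $\theta(A_i)\leq\theta(B_i)\leq\theta(A_i)+\theta(\{0\})$, and because $G$ is non-compact the preceding lemma gives $\theta(A_i)\to\infty$; as $\theta(\{0\})$ is a fixed finite constant (compact sets have finite Haar measure), this yields $\lim_{i\in I}\theta(A_i)/\theta(B_i)=1$ and likewise $\lim_{i\in I}\theta(B_i)/\theta(A_i)=1$.

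For the boundary I would establish the elementary inclusion $\partial_K(E\cup F)\subseteq\partial_K E\cup\partial_K F$ for compact $E,F$: if $-K+g$ meets both $\overline{E\cup F}=\overline E\cup\overline F$ and $\overline{(E\cup F)^c}\subseteq\overline{E^c}\cap\overline{F^c}$, then it meets $\overline E$ and $\overline{E^c}$, or it meets $\overline F$ and $\overline{F^c}$, placing $g$ in $\partial_K E$ or $\partial_K F$. Applying this with $E=A_i$ and $F=\{0\}$ gives $\partial_K B_i\subseteq\partial_K A_i\cup\partial_K\{0\}$, and since $\partial_K\{0\}\subseteq K+\overline{\{0\}}=K$ we obtain $\theta(\partial_K B_i)\leq\theta(\partial_K A_i)+\theta(K)$. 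Dividing by $\theta(B_i)\geq\theta(A_i)$ and using that $(A_i)_{i\in I}$ is Van Hove together with $\theta(A_i)\to\infty$ makes both resulting terms vanish, so $(B_i)_{i\in I}$ is Van Hove.

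The decisive observation for the patch counts is that every patch is centred: for $g\in\omega$ one has $0=g-g\in\omega-g$, hence $(\omega-g)\cap\{0\}=\{0\}$ and therefore $(\omega-g)\cap B_i=\bigl((\omega-g)\cap A_i\bigr)\cup\{0\}$. I would then define $\Phi\colon\operatorname{Pat}_\omega(B_i)\to\operatorname{Pat}_\omega(A_i)$ by $\Phi(Q):=Q\cap A_i$. This map is well defined and surjective, since restricting a $B_i$-patch coming from $g$ yields the corresponding $A_i$-patch from the same $g$; and it is injective because $Q=(Q\cap A_i)\cup\{0\}$ recovers $Q$ from $\Phi(Q)$. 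Hence $\Phi$ is a bijection and $|\operatorname{Pat}_\omega(B_i)|=|\operatorname{Pat}_\omega(A_i)|$ for every $i\in I$ (finite local complexity is used only to ensure both sets are finite).

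Finally I would combine the two parts by writing
\[\frac{\log(|\operatorname{Pat}_\omega(B_i)|)}{\theta(B_i)}=\frac{\log(|\operatorname{Pat}_\omega(A_i)|)}{\theta(A_i)}\cdot\frac{\theta(A_i)}{\theta(B_i)},\]
where the first factor is non-negative and the second tends to $1$. Since multiplying a net of non-negative reals by a net converging to $1$ leaves the $\limsup$ unchanged (even in the value $\infty$), the two $\limsup$-expressions agree. I do not expect a genuine obstacle here: the only steps requiring mild care are the boundary inclusion $\partial_K(E\cup F)\subseteq\partial_K E\cup\partial_K F$ with the bound $\partial_K\{0\}\subseteq K$, while everything else becomes immediate once the observation that every patch contains $0$ is in hand.
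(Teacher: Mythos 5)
Your proof is correct and follows essentially the same route as the paper: the same measure comparison $\theta(A_i)\leq\theta(B_i)\leq\theta(A_i)+\theta(\{0\})$, the same boundary inclusion $\partial_K B_i\subseteq\partial_K A_i\cup K$ combined with $\theta(A_i)\to\infty$, and the same comparison of patch counts. The only (harmless) refinement is that you note every patch contains $0$ and hence get the exact equality $|\operatorname{Pat}_\omega(B_i)|=|\operatorname{Pat}_\omega(A_i)|$, where the paper contents itself with the two-sided bound $|\operatorname{Pat}_\omega(A_i)|\leq|\operatorname{Pat}_\omega(B_i)|\leq 2\,|\operatorname{Pat}_\omega(A_i)|$.
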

\begin{proof}
	Let $K\subseteq G$ be compact. From 
	\begin{align*}
	\partial_K B_i&=(K+(A_i\cup \{0\}))\cap \overline{K+(A_i\cup\{0\})^c}\\
	&\subseteq ((K+A_i)\cup K)\cap (K+\overline{A_i^c})\subseteq (\partial_K A_i)\cup K
	\end{align*}
	we obtain that 
	$0\leq \theta(\partial_K B_i)/\theta(B_i)\leq (\theta(\partial_K A_i)+\theta(K))/\theta(A_i)\rightarrow_{i\in I} 0.$
	Thus $(B_i)_{i\in I}$ is a Van Hove net. Furthermore $1\leq \theta(B_i)/\theta(A_i)\leq (\theta(A_i)+\theta(\{0\}))/\theta(A_i)\rightarrow 1+0$ implies $\lim_{i\in I}\theta(B_i)/\theta(A_i)=1$. 
	As $\operatorname{Pat}_\omega(B_i)\subseteq \operatorname{Pat}_\omega(A_i)\cup \{P\cup \{0\};\, P\in \operatorname{Pat}_\omega(A_i)\}$ for all $i\in I$ we obtain 
	$|\operatorname{Pat}_\omega(A_i)|\leq |\operatorname{Pat}_\omega(B_i)|\leq 2 |\operatorname{Pat}_\omega(A_i)|$
	and a straightforward argument yields the statement. 
	\end{proof}

\begin{theorem}\label{the:EPat=Etop}
	Let $G$ be a non-compact LCA group that contains Meyer sets and $\omega$ be a FLC Delone set in $G$. For every 
	 Van Hove net $(A_i)_{i\in I}$ that is compactly connected to $0$ we have
	\[\operatorname{E}(\pi_\omega)=\lim_{i\in I}\frac{\log(|\operatorname{Pat}_\omega(A_i)|)}{\theta(A_i)}.\]
\end{theorem}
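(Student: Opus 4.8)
The plan is to sandwich the quantity $\limsup_{i\in I}\log(|\operatorname{Pat}_\omega(A_i)|)/\theta(A_i)$ between $\operatorname{E}(\pi_\omega)$ from below and from above, using Proposition \ref{pro:EpatleqEPAT} for the lower bound and Proposition \ref{pro:EPATleqEtop} for the upper bound. The only mismatch between the hypotheses of the theorem and those of Proposition \ref{pro:EPATleqEtop} is that the latter additionally demands $0\in A_i$ for all $i$; I will remove this discrepancy by passing to the net $B_i:=A_i\cup\{0\}$ via Lemma \ref{lem:0andconnected}, and then verifying that compact connectedness is preserved under adjoining $0$.

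First I would fix, using that $(A_i)_{i\in I}$ is compactly connected to $0$, a compact set $C\subseteq G$ such that $(A_i)_{i\in I}$ is $C$-connected to $0$. Setting $B_i:=A_i\cup\{0\}$, Lemma \ref{lem:0andconnected} guarantees that $(B_i)_{i\in I}$ is again a Van Hove net and that
\[
\limsup_{i\in I}\frac{\log(|\operatorname{Pat}_\omega(A_i)|)}{\theta(A_i)}=\limsup_{i\in I}\frac{\log(|\operatorname{Pat}_\omega(B_i)|)}{\theta(B_i)}.
\]
I then check that $(B_i)_{i\in I}$ remains $C$-connected to $0$ and satisfies $0\in B_i$: the latter holds by construction, while for $a\in B_i$ with $a\neq 0$ one has $a\in A_i$, and any $C$-chain $a_0=0,\dots,a_n=a$ in $A_i\cup\{0\}=B_i$ witnessing the $C$-connectedness of $A_i$ lies in $B_i\cup\{0\}$ and hence witnesses that of $B_i$. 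This is the only genuine step beyond invoking the prepared propositions, and it is immediate.

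Next I would apply Proposition \ref{pro:EPATleqEtop} to this fixed compact set $C$, obtaining an entourage $\eta\in\mathbb{U}_{X_\omega}$; since $(B_i)_{i\in I}$ is a Van Hove net that is $C$-connected to $0$ with $0\in B_i$ for all $i$, the proposition together with the displayed identity gives
\[
\limsup_{i\in I}\frac{\log(|\operatorname{Pat}_\omega(A_i)|)}{\theta(A_i)}=\limsup_{i\in I}\frac{\log(|\operatorname{Pat}_\omega(B_i)|)}{\theta(B_i)}\leq \operatorname{E}(\eta|\pi_\omega)\leq \operatorname{E}(\pi_\omega).
\]
Finally, applying Proposition \ref{pro:EpatleqEPAT} directly to the Van Hove net $(A_i)_{i\in I}$ yields the complementary bound $\operatorname{E}(\pi_\omega)\leq \liminf_{i\in I}\log(|\operatorname{Pat}_\omega(A_i)|)/\theta(A_i)$. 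Chaining these inequalities,
\[
\operatorname{E}(\pi_\omega)\leq \liminf_{i\in I}\frac{\log(|\operatorname{Pat}_\omega(A_i)|)}{\theta(A_i)}\leq \limsup_{i\in I}\frac{\log(|\operatorname{Pat}_\omega(A_i)|)}{\theta(A_i)}\leq \operatorname{E}(\pi_\omega),
\]
forces equality throughout, so the limit exists and equals $\operatorname{E}(\pi_\omega)$. Since both substantive directions are already encapsulated in the two propositions, I expect no real obstacle: the entire difficulty has been front-loaded into Lemma \ref{lem:PatleqSEP} and Proposition \ref{pro:EPATleqEtop}, and the present argument is essentially an assembly of these pieces.
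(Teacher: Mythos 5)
Your proposal is correct and follows essentially the same route as the paper's own proof: pass to $B_i:=A_i\cup\{0\}$ via Lemma \ref{lem:0andconnected}, apply Proposition \ref{pro:EPATleqEtop} for the upper bound and Proposition \ref{pro:EpatleqEPAT} for the lower bound. Your explicit check that $C$-connectedness to $0$ survives adjoining $0$ is a detail the paper leaves implicit, but otherwise the arguments coincide.
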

\begin{proof} 
Let $C\subseteq G$ compact such that $A_i$ is $C$-connected to $0$ for all $i\in I$, and define $B_i:=A_i\cup\{0\}$. Then by Lemma \ref{lem:0andconnected} $(B_i)_{i\in I}$ is a Van Hove net that is $C$-connetced to $0$ with $0\in B_i$ and we obtain from Proposition \ref{pro:EPATleqEtop} that
\[\limsup_{i\in I}\frac{\log(|\operatorname{Pat}_\omega(A_i)|)}{\theta(A_i)}=\limsup_{i\in I}\frac{\log(|\operatorname{Pat}_\omega(B_i)|)}{\theta(B_i)}\leq \operatorname{E}(\pi_\omega).\]
	Thus the statement follows from Proposition \ref{pro:EpatleqEPAT}. 
\end{proof}

\begin{corollary}\label{cor:FolnerleqEPat}
	If $(A_i)_{i\in I}$ is a F\o lner net that is compactly connected to $0$, then 
	 \begin{align*}
		\limsup_{i\in I}\frac{\log(|\operatorname{Pat}_\omega(A_i)|)}{\theta(A_i)}\leq \operatorname{E}(\pi_\omega).
	\end{align*}
\end{corollary}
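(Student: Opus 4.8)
The plan is to reduce the statement to Theorem \ref{the:EPat=Etop}, which already furnishes the desired equality for Van Hove nets that are compactly connected to $0$, by thickening the given F\o lner net into a Van Hove net. First I would fix a compact set $C\subseteq G$ such that $A_i$ is $C$-connected to $0$ for all $i\in I$, and choose a compact symmetric neighbourhood $M$ of $0$. By Proposition \ref{pro:VanHovenets+K} the net $(M+A_i)_{i\in I}$ is then a Van Hove net satisfying $\lim_{i\in I}\theta(M+A_i)/\theta(A_i)=1$.

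The next step is to verify that $(M+A_i)_{i\in I}$ remains compactly connected to $0$, so that Theorem \ref{the:EPat=Etop} applies. Here I would set $C':=C\cup M$ and argue as follows: given $b\in M+A_i$, write $b=m+a$ with $m\in M$ and $a\in A_i$; a $C$-connecting path $0=a_0,\dots,a_n=a$ for $a$ inside $A_i\cup\{0\}$ lies, using $0\in M$ and hence $A_i\subseteq M+A_i$, inside $(M+A_i)\cup\{0\}$, and appending the single step from $a$ to $b$, which differs by $m\in M\subseteq C'$, produces a $C'$-connecting path for $b$. Thus $(M+A_i)_{i\in I}$ is $C'$-connected to $0$, and Theorem \ref{the:EPat=Etop} yields $\operatorname{E}(\pi_\omega)=\lim_{i\in I}\log(|\operatorname{Pat}_\omega(M+A_i)|)/\theta(M+A_i)$.

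Finally I would exploit monotonicity of the patch count. Since $A_i\subseteq M+A_i$, restricting each patch from $M+A_i$ to $A_i$ defines a surjection $\operatorname{Pat}_\omega(M+A_i)\to\operatorname{Pat}_\omega(A_i)$, so $|\operatorname{Pat}_\omega(A_i)|\leq|\operatorname{Pat}_\omega(M+A_i)|$, while $\theta(A_i)\leq\theta(M+A_i)$. Writing
\[\frac{\log(|\operatorname{Pat}_\omega(A_i)|)}{\theta(A_i)}\leq\frac{\log(|\operatorname{Pat}_\omega(M+A_i)|)}{\theta(M+A_i)}\cdot\frac{\theta(M+A_i)}{\theta(A_i)}\]
and taking the limit superior, the first factor on the right converges to $\operatorname{E}(\pi_\omega)$ and the second to $1$, so the product converges to $\operatorname{E}(\pi_\omega)$; hence the claimed inequality follows.

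The computations are all routine; the only point requiring a little care is the connectivity argument for $M+A_i$, since $0$ need not lie in $A_i$ and one must check that the intermediate vertices of the thickened path still lie in $(M+A_i)\cup\{0\}$. This is precisely where it is used that $M$ is a neighbourhood of $0$, giving both $0\in M$ and $A_i\subseteq M+A_i$.
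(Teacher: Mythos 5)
Your proposal is correct and follows essentially the same route as the paper: the paper's proof also thickens the F\o lner net to the Van Hove net $(A_i+K)_{i\in I}$ for a compact neighbourhood $K$ of $0$, notes that it stays compactly connected to $0$, and concludes via monotonicity of $|\operatorname{Pat}_\omega(\cdot)|$ together with $\theta(A_i+K)/\theta(A_i)\to 1$ and Theorem \ref{the:EPat=Etop}. You merely spell out the connectivity check and the surjection argument that the paper leaves implicit.
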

\begin{proof}
	Let $K\subseteq G$ be a compact neighbourhood of $0$. Then $(A_i+K)_{i\in I}$ is a Van Hove net that is compactly connected to $0$ and we compute
	 \begin{align*}
		\limsup_{i\in I}\frac{\log(|\operatorname{Pat}_\omega(A_i)|)}{\theta(A_i)}
		&\leq \limsup_{i\in I}\frac{\log(|\operatorname{Pat}_\omega(A_i+K)|)}{\theta(A_i)}\\
		&=\limsup_{i\in I}\frac{\log(|\operatorname{Pat}_\omega(A_i+K)|)}{\theta(A_i+K)}=\operatorname{E}(\pi_\omega).
	\end{align*} 
\end{proof}

\begin{remark}
	In \cite{baake2007pure} it is shown for FLC Delone sets $\omega\subseteq \mathbb{R}^d$ that 
	\[\operatorname{E}(\pi_\omega)=\limsup_{n \to \infty}\frac{\log(|\operatorname{Pat}_\omega({B}_n)|)}{\theta({B}_n)},\]
	where $({B}_n)_{n\in \mathbb{N}}$ is the Van Hove sequence of the centred closed balls of radius $n\in \mathbb{N}$. In this context the question was raised, whether there is an  overall factor of ${\theta(B_1)}/{\theta(C_1)}$, if we replace the centred balls by centred cubes $C_n$ of side length $2n$. We obtain from Theorem \ref{the:EPat=Etop} that this factor is $1$.
	\end{remark}

From Proposition \ref{pro:EPATleqEtop} we can furthermore conclude the following. 

\begin{theorem}\label{the:EpatsimplifiedforFLC}
	Let $\omega$ be a FLC Delone set in a non-compact LCA group that contains Meyer sets. Let $C\subseteq G$ be a compact subset. Then there exists an open neighbourhood $V$ of $0$ such that for every $C$-connected Van Hove net $(A_i)_{i\in I}$ with $0\in A_i$ for all $i\in I$ and all open neighbourhoods $W$ of $0$ that are contained in $V$ the following limit exists and satisfies
\[\operatorname{E}(\pi_\omega)=\lim_{i\in I}\frac{\log(\operatorname{pat}_\omega(A_i,W))}{\theta(A_i)}.\]
\end{theorem}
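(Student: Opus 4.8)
The goal is to show that for an FLC Delone set, the "scaled" patch counting quantity $\operatorname{pat}_\omega(A_i, W)$ recovers the topological entropy exactly, as a genuine limit, once $W$ is a small enough neighbourhood of $0$, for every $C$-connected Van Hove net containing $0$.

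The plan is to sandwich $\log(\operatorname{pat}_\omega(A_i,W))$ between two quantities that both converge to $\operatorname{E}(\pi_\omega)$. For the lower bound I would first invoke the general inequality from Section \ref{sec:PatchcountingGeneral} (Theorem \ref{the:Etop=Epat}) together with the obvious monotonicity $\operatorname{pat}_\omega(A_i,W)\le |\operatorname{Pat}_\omega(A_i)|$ recorded just before Proposition \ref{pro:EpatleqEPAT}; this already gives that the $\limsup$ of $\log(\operatorname{pat}_\omega(A_i,W))/\theta(A_i)$ is at most $\limsup \log(|\operatorname{Pat}_\omega(A_i)|)/\theta(A_i)$, which by Theorem \ref{the:EPat=Etop} equals $\operatorname{E}(\pi_\omega)$ for any compactly connected Van Hove net. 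So the upper estimate of the sandwich is essentially free from the FLC results already proved.

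\textbf{Choosing $V$ and the reverse bound.} The key point is to pin down $V$ so that, once $W\subseteq V$, the scaled patch representation is forced to be \emph{exact}. Here I would mimic the discreteness argument in Lemma \ref{lem:PatleqSEP}: pick a symmetric compact neighbourhood $\tilde C$ of $0$ containing both $C$ and a relative-denseness constant for $\omega$, and then pick $V$ with $\omega$ being $V$-discrete and $V$ small relative to $\tilde C$. The intuition is that if two translates $\omega-f,\omega-g$ agree on $A_i$ up to an error $W\subseteq V$, then because $\omega$ is $V$-discrete and $A_i$ is $C$-connected to $0$, one can propagate exact agreement point by point along a $C$-chain (exactly as in the induction in Lemma \ref{lem:PatleqSEP}), so that in fact $(\omega-f)\cap A_i=(\omega-g)\cap A_i$. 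Consequently an $A_i$-patch representation of scale $W$ is an exact $A_i$-patch representation, giving $|\operatorname{Pat}_\omega(A_i)|\le \operatorname{pat}_\omega(A_i,W)$. Combined with the trivial reverse inequality this yields $\operatorname{pat}_\omega(A_i,W)=|\operatorname{Pat}_\omega(A_i)|$ for all such nets, for every $W\subseteq V$.

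\textbf{Conclusion and the main obstacle.} Once equality of $\operatorname{pat}_\omega(A_i,W)$ with $|\operatorname{Pat}_\omega(A_i)|$ is established, the result is immediate from Theorem \ref{the:EPat=Etop}, which already gives that $\lim_{i\in I}\log(|\operatorname{Pat}_\omega(A_i)|)/\theta(A_i)=\operatorname{E}(\pi_\omega)$ as a genuine limit (not merely a $\limsup$) for compactly connected Van Hove nets; the hypothesis $0\in A_i$ is exactly what is needed to apply Proposition \ref{pro:EPATleqEtop}. The hard part is the reverse bound, i.e.\ the propagation-of-exactness argument that justifies the choice of $V$: one must verify that the $C$-chain from $0$ to an arbitrary point of $A_i$, together with $V$-discreteness and the relative-denseness of $\omega$, forces the scale-$W$ agreement to collapse to literal equality, handling the error terms $u,v\in W$ so that $V$-discreteness yields $u=v$ at each inductive step. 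Rather than reproving this from scratch, I would try to route it through Proposition \ref{pro:EPATleqEtop}: since that proposition was derived via Lemma \ref{lem:PatleqSEP} and already produces the same entourage $\eta$ and a bound $|\operatorname{Pat}_\omega(A_i)|\le \operatorname{sep}_{D_\omega}(\eta_{(-A_i)})\le \operatorname{npat}_\omega$-type quantities, the cleanest route is to show $\operatorname{npat}_\omega(A_i,W)$ or $\operatorname{pat}_\omega(A_i,W)$ sits between $\operatorname{sep}_{D_\omega}(\eta_{(-A_i)})$ and $|\operatorname{Pat}_\omega(A_i)|$ and invoke the squeeze, so that all the delicate discreteness bookkeeping is inherited from Lemma \ref{lem:PatleqSEP} rather than repeated.
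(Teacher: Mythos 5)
Your upper estimate is fine: $\operatorname{pat}_\omega(A_i,W)\leq |\operatorname{Pat}_\omega(A_i)|$ together with Theorem \ref{the:EPat=Etop} (or directly Theorem \ref{the:Etop=Epat}) gives $\limsup_{i\in I}\log(\operatorname{pat}_\omega(A_i,W))/\theta(A_i)\leq \operatorname{E}(\pi_\omega)$. The gap is in your lower bound. You claim that for $W$ small enough every $A$-patch representation of scale $W$ is exact, i.e.\ that $\omega-f\overset{A,W}{\approx}\omega-g$ with $f,g\in\omega$ forces $(\omega-f)\cap A=(\omega-g)\cap A$. This is false for FLC Delone sets that are not Meyer. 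The relation $\overset{A,W}{\approx}$ is a local-rubber comparison: each point of one patch is only required to lie within $W$ of \emph{some} point of the other translate, with errors varying from point to point. If $a\in\omega-f$ and $b\in\omega-g$ with $f,g\in\omega$, then $a-b\in(\omega-\omega)-(\omega-\omega)$, and for an FLC non-Meyer set this set accumulates at $0$; so one can find $f,g,a,b$ with $0<\|a-b\|$ smaller than any prescribed $W$ while $a\notin\omega-g$ (uniform discreteness of $\omega-g$ even guarantees $a\notin\omega-g$ once $\|a-b\|$ is small). Taking $A$ to be a $C$-chain $\{0,c_1,\dots,a\}$ whose intermediate points avoid $\omega-f$ and $\omega-g$, one gets $\omega-f\overset{A,W}{\approx}\omega-g$ with $(\omega-f)\cap A=\{0,a\}\neq\{0\}=(\omega-g)\cap A$. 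Your plan to ``propagate exact agreement exactly as in the induction in Lemma \ref{lem:PatleqSEP}'' does not repair this: that induction runs on the \emph{local matching} entourages $\eta_\omega(K,V)$, which give literal equality of the two translates after a single uniform shift $u,v\in V$ on windows $K+a_i$ that may protrude outside $A$; the rubber condition defining $\operatorname{pat}_\omega(A,W)$ gives neither a uniform shift nor any control outside $A$, and the passage between the two bases (via \cite[Prop.\ 4.5]{baake2004dynamical}) is not uniform in the size of $A$.

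Your fallback suggestion also cannot work as stated: since $\operatorname{pat}_\omega(A_i,W)\leq|\operatorname{Pat}_\omega(A_i)|\leq\operatorname{sep}_{D_\omega}(\eta_{(-A_i)})$, the quantity $\operatorname{pat}_\omega(A_i,W)$ cannot ``sit between'' $\operatorname{sep}_{D_\omega}(\eta_{(-A_i)})$ and $|\operatorname{Pat}_\omega(A_i)|$. What the paper actually does is squeeze growth rates rather than cardinalities: Proposition \ref{pro:EPATleqEtop} gives $\operatorname{E}(\pi_\omega)\leq\limsup_i\log|\operatorname{Pat}_\omega(A_i)|/\theta(A_i)\leq\operatorname{E}(\eta|\pi_\omega)$ for a suitable $\eta$, and then Lemma \ref{lem:SpaSepCovStaticRelation}(ii), Lemma \ref{lem:epsilonentourageandBowen} and Lemma \ref{lem:centeredornotApatches} yield $\operatorname{cov}_{X_\omega}(\eta_{(-B_i)})\leq N\operatorname{pat}_\omega(A_i,W)$ for a shrunken Van Hove net $(B_i)_{i\in I}$ with $\theta(B_i)/\theta(A_i)\to1$ and a constant $N$ coming from the relative denseness of $\omega$; the constant and the shrinking are harmless for the exponential growth rate, and this closes the sandwich without ever claiming $\operatorname{pat}_\omega(A_i,W)=|\operatorname{Pat}_\omega(A_i)|$. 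You would need to supply this chain of inequalities (or an equivalent) to complete your argument.
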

\begin{proof} By Proposition \ref{pro:EPATleqEtop} there exists $\eta\in \mathbb{U}_X$ such that $\limsup_{i\in I}\log(|\operatorname{Pat}(A_i)|)/\theta(A_i)\leq \operatorname{E}(\eta|\pi_\omega)$. 
By Lemma \ref{lem:SpaSepCovStaticRelation}(ii) there is $\kappa\in \mathbb{U}_X$ such that for all symmetric $\epsilon\subseteq \kappa$ we have $\operatorname{cov}_{X}(\eta_A)\leq \operatorname{spa}_{D}(\epsilon_A)$ for all compact sets $A\subseteq G$. Let $K'\subseteq G$ be a compact subset and $V'$ be an open neighbourhood of $0$ such that $\epsilon_\omega(K',V')\subseteq \kappa$. Let $K\subseteq G$ be compact such that $\omega$ is $K$-dense and $V$ a pre-compact, symmetric and open neighbourhood of $0$ such that $V+V\subseteq V'$. Let $(B_i)_{i\in I}$ be a Van Hove net such that $B_i+K'+K\subseteq A_i$ for all $i\in I$ and such that $\lim_{i\in I}\theta(B_i)/\theta(A_i)=1$. We thus obtain from Lemma \ref{lem:centeredornotApatches} the existence of a $N\in \mathbb{N}$ such that we have
\begin{align*}
	\operatorname{cov}_{X_\omega}(\eta_{(-B_i)})&\leq \operatorname{spa}_{D_\omega}(\epsilon_\omega(K',V')_{(-B_i)})
	=\operatorname{npat}_\omega(B_i+K',V')
	\leq \operatorname{npat}_\omega(B_i+K',V+V)\\
	&\leq N\operatorname{pat}_\omega(B_i+K'+K,V)
	\leq N \operatorname{pat}_\omega(A_i,V). 
\end{align*}
Thus  by Proposition \ref{pro:EpatleqEPAT} and Theorem \ref{the:Etop=Epat} for every open neighbourhood $W$ of $0$ that is contained in $V$ we observe
\begin{align*}
\operatorname{E}(\pi_\omega)&\leq \limsup_{i\in I}\frac{\log(|\operatorname{Pat}_\omega(A_i)|)}{\theta(A_i)}
\leq \operatorname{E}(\eta|\pi_\omega)
=\lim_{i\in I}\frac{\log(\operatorname{cov}_{X_\omega}(\eta_{(-B_i)}))}{\theta(-B_i)}\\
&\leq \liminf_{i\in I}\frac{\log(N\operatorname{pat}_{\omega}(A_i,W))}{\theta(A_i)}
=\liminf_{i\in I}\frac{\log(\operatorname{pat}_{\omega}(A_i,W))}{\theta(A_i)}\\
&\leq\limsup_{i\in I}\frac{\log(\operatorname{pat}_{\omega}(A_i,W))}{\theta(A_i)}\leq \operatorname{E}(\pi_\omega).
\end{align*}
\end{proof}

\section{Examples}\label{sec:examples}

We now give details on Example \ref{exa:introcompactlyconnectednecessary}. 
\begin{example}\label{exa:compactlyconnectednecessary}
%Example: finally compactly connected to $0$ necessary\\
	Consider the finite local complexity Delone set $\omega:=(-\mathbb{N}_0)\cup \alpha\mathbb{N}_0\subseteq \mathbb{R}$ for $\alpha\in [0,1]$ irrational. 	
	Then for $\kappa\in [0,\infty]$ one obtains
	\begin{align*}
		\lim_{n \to \infty}\frac{\log(|\operatorname{Pat}_\omega(A_n)|)}{\theta(A_n)}=\kappa,
	\end{align*}
	whenever we choose $A_n:=[0,n]+e^{\kappa n}$ if $\kappa$ is finite and $A_n:=[0,n]+e^{(n^2)}$ if $\kappa=\infty$. 
	
	We first consider the case $\kappa<\infty$ and define $A_n:=[0,n]+e^{\kappa n}$ for any $n\in \mathbb{N}$. We will now show that $F_n:=\mathbb{Z}\cap[-(n+1)-e^{\kappa n},0]$ is an exact $A_n$-patch representation for $\omega$. To do this we consider $g\in \omega \setminus F_n$. If $g> 0$, then $(\omega-g)\cap A_n=(\alpha\mathbb{Z})\cap A_n=(\omega-0)\cap A_n$ and we can represent $g$ by $0\in F_n$. 
	If $g\leq 0$, then $g\in \mathbb{Z}$ and we obtain from $g\notin F_n$ that we have $g<-(n+1)-e^{\kappa n}$. Thus we obtain $g,(\min F_n)\leq -n-e^{\kappa n}$ and observe $(\omega-g)\cap A_n=\mathbb{Z}\cap A_n=(\omega-\min F_n)\cap A_n$. 
	This shows that $F_n$ is indeed an exact $A_n$-patch representation for $\omega$ and we obtain $|\operatorname{Pat}_\omega(A_n)|\leq |F_n|\leq (n+1)+e^{\kappa n}$. Now as $\kappa\geq 0$ for sufficiently large $n$ we have $1 \leq e^{\kappa n}$ and we get $\log|\operatorname{Pat}_\omega(A_n)| \leq \log((n+2)e^{\kappa n})=\log(n+2)+\kappa n$. Hence 
\begin{align*}
		\limsup_{n \to \infty}
		\frac{\log|\operatorname{Pat}_\omega(A_n)|}{\theta(A_n)}
		\leq \limsup_{n \to \infty} \left(\frac{\log(n+2)}{n}+\frac{\kappa n}{n}\right)=\kappa.
	\end{align*}
Thus the statement follows whenever $\kappa=0$. Otherwise let us next consider $E_n:=\mathbb{Z}\cap(-e^{\kappa n},0]\cap \mathbb{Z}$. Then for $g\in E_n$ we have $|g|< e^{\kappa n}=\min A_n$. Thus the elements of $(\omega-g)\cap A_n$ are of the form $|g|+k\alpha$ for $k\in \mathbb{N}$. Furthermore as $\alpha\leq 1$ there is at least one such number contained in $(\omega-g)\cap A_n$.
	Thus whenever we consider distinct $g,g'\in E_n$ we obtain from $\alpha$ being irrational, that the corresponding patches $(\omega-g)\cap A_n$ and $(\omega-g')\cap A_n$ do not agree. This yields that $|\operatorname{Pat}_\omega(A_n)|\geq |E_n|\geq e^{\kappa n}-1.$ Now as we assume that $\kappa>0$ we obtain that for large $n$ we have $2\leq e^{\kappa n}$ and in particular that $\log|\operatorname{Pat}_\omega(A_n)|\geq \log(e^{\kappa n}-(1/2)e^{\kappa n})=\log(1/2)+\kappa n$. This allows to compute 
 \begin{align*}
		\liminf_{n \to \infty}
		\frac{\log|\operatorname{Pat}_\omega(A_n)|}{\theta(A_n)}
		\geq \liminf_{n \to \infty}
		\frac{\log(1/2)+\kappa n}{n}=\kappa.
	\end{align*}
and we obtain the claimed statement for all $\kappa<\infty$. Similarly one shows the result for $\kappa=\infty$ using $A_n:=[0,n]+e^{(n^2)}$. 
\end{example}

	We next give details on Example \ref{exa:introFolnerrightergodic} in Example \ref{exa:Folnerfails} and Example \ref{exa:ergodicfails}.

\begin{example}\label{exa:Folnerfails}
Consider the Delone set of finite local complexity
			\[\omega:=\{n\in \mathbb{N};\, \xi_n=1\}\cup (\mathbb{Z}+1/2),\] where $(\xi_n)_{n\in \mathbb{N}}$ is a sequence containing all finite words in $\{0,1\}$, i.e.\ for all finite sequences $(x_j)_{j=1}^n$ there exists $i\in \mathbb{N}$ such that $\xi_{i+j}=x_j$ for $j=1,\cdots,n$. Then $\operatorname{E}_{pc}(\omega)=\log(2)$ and for all $\kappa\in[0,\log(2)]$ there is a F\o lner sequence $(A_n)_{n\in \mathbb{N}}$, which is compactly connected to $0$, such that
		\begin{align}\label{form:EPATforFolner2}
		\limsup_{n \to \infty}\frac{\log(\operatorname{Pat}_\omega(A_n))}{\theta(A_n)}=\kappa.
	\end{align}
	Indeed, consider $A_n:=[0,\rho n]\cup \left([0,n]\setminus\left(\frac{1}{2}\mathbb{Z}+B^o_{(n+3)^{-1}}\right)\right)$ with $\rho:=\kappa/\log(2)\in [0,1]$, where $B_r^o$ denotes the open and centred Euclidean ball with radius $r$. 
	We first show that $(A_n)_{n\in \mathbb{N}}$ is a F\o lner sequence. Let $K\subseteq G$ compact and non-empty let $k\in \mathbb{N}$ such that $K\subseteq [-k,k]$. From $\theta(A_n\cap (K+A_n))+\theta(A_n\setminus (K+A_n))=\theta(A_n)\leq \theta(K+A_n)=\theta((K+A_n)\setminus A_n)+\theta((K+A_n)\cap A_n)$ it follows that $\theta((K+A_n)\Delta A_n)\leq 2 \theta((K+A_n)\setminus A_n)\leq 2 \theta(([-k,k]+A_n)\setminus A_n)$. As $([-k,k]+A_n)\setminus A_n\subseteq [-k,n+k]\setminus (\mathbb{R}\setminus(\frac{1}{2}\mathbb{Z}+B^o_{(n+2)^{-1}}))=[-k,n+k]\cap (\frac{1}{2}\mathbb{Z}+B^o_{(n+2)^{-1}})$ we get $\theta((K+A_n)\Delta A_n)/\theta(A_n)\leq 8(2k+n)/( n (n+2))$ and obtain $(A_n)_{n\in \mathbb{N}}$ to be a F\o lner net.
	We next show (\ref{form:EPATforFolner2}). As $\omega\subseteq 1/2 \mathbb{Z}$ for $n\in \mathbb{N}$ we get 
\begin{align*}
\operatorname{Pat}_\omega(A_n)=\operatorname{Pat}_\omega([0,\rho n])
=&\left\{W\cup \left([0,\rho n]\cap \left(\frac{1}{2}+\mathbb{Z}\right)\right);\, W\subseteq [0,\rho n]\cap \mathbb{Z}\right\}\\
&\cup 
	\left\{\left(W-\frac{1}{2}\right)\cup\left([0,\rho n]\cap \mathbb{Z}\right);\, W\subseteq [1,\rho n]\cap \mathbb{Z}\right\}.
\end{align*}
We thus obtain that $2^{\rho n}\leq |\operatorname{Pat}_\omega(A_n)|\leq 2^{\rho n +2}$ and a straightforward argument shows (\ref{form:EPATforFolner2}). 	
	Note that for $\kappa=\log(2)$ we obtain that $(A_n)_{n\in \mathbb{N}}=([0,n])_{n\in \mathbb{N}}$ is a Van Hove sequence that is compactly connected to $0$. Thus $\operatorname{E}_{pc}(\omega)=\log(2)$. 
\end{example}

\begin{example}\label{exa:ergodicfails}	
	Consider $\omega\subseteq \mathbb{R}$ as in Example \ref{exa:Folnerfails}.  		
	Then for all $\kappa\in[0,\infty]$ there is an ergodic sequence $(A_n)_{n\in \mathbb{N}}$, which is compactly connected to $0$, such that 
		\begin{align}\label{form:EPATforergodic2}
		\limsup_{n \to \infty}\frac{\log(\operatorname{Pat}_\omega(A_n))}{\theta(A_n)}=\kappa.
	\end{align}
	As every F\o lner sequence is ergodic it remains to consider $\kappa\in [\log(2),\infty]$. 
	We first consider the case $\kappa<\infty$, set $\rho:=\kappa/\log(2)$ and define  
	\[A_n:= \left([0,n]\setminus \left(\frac{1}{2}\mathbb{Z}+B^o_{(n+3)^{-1}}\right)\right)\cup \big([0,\rho n]\cap \mathbb{Z}\big).\] 
	To show that $(A_n)_{n\in\mathbb{N}}$ is ergodic let $g\in \mathbb{R}$. Then 
	a straightforward computation shows $(A_n+g)\Delta A_n\subseteq \mathbb{Z}\cup (\mathbb{Z}+g)\cup(([0,n]+g)\Delta[0,n])$ and we obtain $\theta((A_n+g)\Delta A_n)/\theta(A_n)\leq \theta(([0,n]+g)\Delta[0,n])/\theta([0,n])\rightarrow 0.$
	This shows $(A_n)_{n\in \mathbb{N}}$ to be ergodic. As $\omega-g\subseteq 1/2 \mathbb{Z}$ for all $g\in \omega$ we obtain $|\operatorname{Pat}_\omega(A_n)|
			=\left|\operatorname{Pat}_\omega([0,\rho n]\cap \mathbb{Z})\right|$. 
	Hence $2^{\rho n}\leq |\operatorname{Pat}_\omega(A_n)|\leq 2^{\rho n +1}$ and we obtain (\ref{form:EPATforergodic2}). Similar one shows (\ref{form:EPATforergodic2}) in the case of $\kappa=\infty$ using the ergodic sequence $A_n:= ([0,n]\setminus (1/2\mathbb{Z}))\cup ([0,n^2]\cap \mathbb{Z})$. 
	\end{example}

\begin{acknowledgement}
The author would like to thank Gabriel Fuhrmann for enlightening discussions on F\o lner and Van Hove sequences and Tobias Oertel-J\"ager for his patient supervision. Furthermore the author would like to express his gratitude towards the anonymous referee for very useful suggestions that improved the readability of the article. 
\end{acknowledgement}

%\cite{pleasants2013entropy} - PCentropy definition

%\begin{acknowledgement}
%\end{acknowledgement}

\footnotesize

 \vspace{10mm} \noindent
\begin{tabular}{l l }
Till Hauser \\%& Prof. Dr. Tobias Oertel-J\"ager \\
Faculty of Mathematics and Computer Science\\% & Faculty of Mathematics and Computer Science\\
Institute of Mathematics \\%& Institute of Mathematics \\
Friedrich Schiller University Jena\\% & Friedrich Schiller University Jena\\
07743 Jena\\% & 07743 Jena\\
Germany\\% & Germany\\
\end{tabular}

\end{document}